\documentclass[reqno,11pt]{amsart}
\usepackage{amsmath,amsfonts,amssymb,amsthm}

\usepackage{latexsym,bm,graphicx}
\usepackage{mathrsfs}
\usepackage{color}

\newtheorem{thm}{Theorem}[section]
\newtheorem{prop}[thm]{Proposition}
\newtheorem{lem}[thm]{Lemma}

\theoremstyle{definition}
\theoremstyle{remark}

\newtheorem{defn}[thm]{Definition}
\newtheorem{remark}[thm]{Remark}

\numberwithin{equation}{section}

\newcommand{\ls}{\leqslant}
\newcommand{\gs}{\geqslant}

\newcommand{\bR}{\mathbb{R}}

\newcommand{\cH}{\mathcal{H}}

\def\XXint#1#2#3{{\setbox0=\hbox{$#1{#2#3}{\int}$ }
		\vcenter{\hbox{$#2#3$ }}\kern-.6\wd0}}

\title{curvature integral, volume ratio, bi-Lipschitz for spaces with curvature bounded below}
\author{Yin Jiang} \address{School of Mathematical sciences, Beihang University, Beijing, P.R.C.}\email{jiangyin@buaa.edu.cn} 
\author{Nan Li} \address{Department of Mathematics, The City University of New York - NYC College OF Technology, 300 Jay
	St., Brooklyn, NY 11201} \email{nli@citytech.cuny.edu}

\begin{document}

	\maketitle
	
	\bibliographystyle{amsplain}
	
	\begin{abstract}
		
		By the paper \cite{Pet2009upper}, \cite{LiNan2026}, we know that there exists $C(n)>0$, for any complete compact or non-compact Riemannian n manifold $M$ with non-negative sectional curvature, without boundary and any $R>0$,
		\begin{equation}
			R^{2-n}\int_{B(p,R)} Scal \ls C(n).
		\end{equation}
		Based on this result, we will prove that 
		
		(1) There exists constant $C(n)$. If $M$ is a complete, n dimensional, non-compact Riemannian manifold with non-negative sectional curvature, then for any $p\in M$, $R>0$,
		\begin{equation}
			R^{2-n}\int_{B(p,R)} Scal \ dvol \ls C(n)(1-v(M)),
		\end{equation}
		where $Scal$ is the scalar curvature and $v(M)=\lim\limits_{R\to \infty} \frac{volB(p,R)}{volB(0,R)}$ is the asymptotic volume ratio.
		
		(2) There exists constant $C(n)$. If $M$ is a complete, n dimensional Riemannian manifold with sectional curvature $\ge 1$, then 
		\begin{equation}
		\int_M (Scal-n(n-1)) \ dvol \le C(n)(1-\frac{vol(M)}{vol(S^n(1))}).
		\end{equation}
	\end{abstract}
	
	\maketitle
	\section{Introduction}
Scalar curvature is one of the most important concepts in Riemannian geometry. It's important to study how the scalar curvature behave for a sequence of n dimensional Riemannian manifolds $(M_i^n,g_i)$ with curvature $\ge k$, that (pointed) Gromov Hausdorff converging to a limit space $(X,d)$. That is, 
\begin{equation}\label{introduction:GH}
(M_i,p_i) \overset{pGH}{\longrightarrow} (X,p).
\end{equation}
It's known that the limit space $X$ is an $m$ dimensional Alexandrov space with curvature $\ge k$ for some integer $m\le n$. When $n=m$, i.e. the non-collapsing case, based on Perelman's idea, Petrunin \cite{Petpoly2003} proved that: If the limit space $(X,d)$ is an n dimensional smooth Riemannian manifold, then the scalar curvature of $(M^n_i,g_i)$ weakly converge to that of $X$. This implies that when $X$ is compact,
\begin{equation}
\int_{M_i} Scal \ dvol \to \int_X Scal \ dvol,
\end{equation}
where $Scal$ is the Scalar curvature. 
In the paper \cite{Pet2009upper}, A. Petrunin proved the following theorem, which is related to a question asked by Gromov.
	\begin{thm}[Petruin \cite{Pet2009upper}]
		There exists $C(n)>0$, let $M$ be a complete Riemannian n manifold with sectional curvature at least $-1$, then for any $p\in M$, the integral of scalar curvature on the 1-ball centered at p satisfies:
		\begin{equation}\label{ls:PetSc}
			\int_{B(p,1)} Scal\ dvol \ls C(n).
		\end{equation}
	\end{thm}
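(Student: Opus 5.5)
We would follow Petrunin's strategy and bound the scalar curvature integral by a boundary term plus a volume term, using comparison geometry and the Gauss formula. The basic identity is that for the distance function $f=\operatorname{dist}_q$ from a point $q$, away from the cut locus,
\begin{equation*}
\operatorname{Ric}(\nabla f,\nabla f) = -\partial_r(\Delta f) - |\nabla^2 f|^2 .
\end{equation*}
Scalar curvature is the average of $n\operatorname{Ric}(v,v)$ over unit vectors $v$. We would therefore average the Ricci identity over many base points $q$, chosen so that the directions $\nabla \operatorname{dist}_q$ at a point $x\in B(p,1)$ are roughly uniformly spread on the unit sphere of $T_xM$.

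\textbf{Step 1.} Fix a finite set of points $q_1,\dots,q_N$ at distance about $1$ from $p$. For instance, take the endpoints of geodesics from $p$ in directions forming a fine net of $T_pM$. By Toponogov comparison with lower bound $-1$, for most $x\in B(p,1/10)$ the gradients $\nabla\operatorname{dist}_{q_j}(x)$ are almost evenly distributed. This gives a pointwise inequality
\begin{equation*}
Scal(x)\ls C(n)\sum_j \operatorname{Ric}(\nabla f_j,\nabla f_j)(x)+C(n),
\end{equation*}
valid up to the lower bound $\operatorname{Ric}\ge -(n-1)$. This is the delicate part. Rather than controlling directions exactly, we would instead average $\operatorname{dist}_q$ over $q$ in a ball $B(q_0,\delta)$. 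The point is that the averaged Hessian behaves like a smooth function whose directions span.

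\textbf{Step 2.} Integrate $\operatorname{Ric}(\nabla f,\nabla f)$ over $B(p,1)$. Using the identity above, drop the term $-|\nabla^2 f|^2\ls 0$. Write $-\partial_r\Delta f=-\operatorname{div}(\Delta f\,\nabla f)+(\Delta f)^2$. The divergence term becomes a boundary integral, and the cut locus contributes with the favorable sign, since $\Delta f$ jumps down there in the distributional sense. The boundary term is bounded by Laplacian comparison, $\Delta f\ls (n-1)\coth f$, together with the Bishop--Gromov bound on the area of the boundary. The remaining term $\int(\Delta f)^2$ is not obviously bounded, because $\Delta f$ may be very negative. This is where Petrunin's idea enters: replace $f$ by a concave-type function such as $-\phi(f)$, so the relevant Hessian is bounded above. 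Then use the inequality
\begin{equation*}
\int |\nabla^2 h|\ls C\Big(\int \Delta h + \text{boundary}\Big)
\end{equation*}
for semiconcave $h$ with $\nabla^2 h\ls C$. This works because the Hessian is bounded above and its trace integrates to a boundary term.

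\textbf{Step 3.} Rather than a plain distance function, we would use a sum of semiconcave functions $h=\sum_j\phi(\operatorname{dist}_{q_j})$. This makes $-h$ strictly concave near $p$ with $\nabla^2 h\ls C$. We would then express $\int Scal$ through Bochner-type integration by parts, in the form
\begin{equation*}
\int \operatorname{Ric}(\nabla h,\nabla h)=\int\big((\Delta h)^2-|\nabla^2 h|^2\big)+\text{boundary terms}.
\end{equation*}
Because $\nabla^2 h$ is bounded above and $\int|\Delta h|$ is bounded, both $\int (\Delta h)^2$ and $\int|\nabla^2h|^2$ are controlled in the nonsmooth sense. Here $\Delta h$ is a measure whose singular part is negative, and Step 2 handles that part.

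\textbf{Main obstacle.} The hard step is making Ricci curvature in the directions $\nabla h$ dominate full scalar curvature. This needs a uniform spanning or strict concavity property of $h$ on a definite ball, with constants depending only on $n$, even near collapsed or singular regions. We would establish it first by a contradiction and compactness argument in Alexandrov spaces, using Perelman's concave functions, and only then run the integration by parts.
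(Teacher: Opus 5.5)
The paper does not prove this theorem. It quotes it from Petrunin and uses it as a black box, for instance to dispose of the case $v(M)\le 1-\epsilon_0$. The relevant comparison is therefore with the scheme the paper runs in Section 3 and in Section 5/Appendix 2, which it attributes to Perelman and Petrunin: smoothed concave functions, the Gauss equation on their level sets, and induction on dimension via the coarea formula. Measured against that, your plan has two genuine gaps.

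\textbf{The domination step is false, not just delicate.} For $n\ge 3$ take $M=S^2(\epsilon)\times\bR^{n-2}$.
\begin{itemize}
\item Here $Scal\equiv 2/\epsilon^2$, and $\int_{B(p,1/10)}Scal$ stays bounded below by a positive constant as $\epsilon\to 0$. All of it comes from the thin factor.
\item If $|xq|\ge\frac12$, the minimal geodesic from $x$ to $q$ has $S^2$-speed at most $\pi\epsilon/|xq|\le 2\pi\epsilon$. Since $Ric(v,v)=|v_{S^2}|^2/\epsilon^2$, this gives $Ric(\nabla f_j,\nabla f_j)\le 4\pi^2$ for every $j$.
\item Hence $C\sum_{j=1}^N Ric(\nabla f_j,\nabla f_j)+C$ integrates over $B(p,1/10)$ to $O((N+1)\epsilon^2)$. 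Your Step 1 inequality fails even in integrated form, for any fixed $N$ and $C$.
\item Averaging $\mathrm{dist}_q$ over $q\in B(q_0,\delta)$ does not help, because all those gradients lie in the same thin cone.
\end{itemize}
Your compactness argument for the ``main obstacle'' cannot rescue this, because the statement it targets is false.
\begin{itemize}
\item In $\bR^{n-1}\times S^1(\epsilon)$, every ball of radius $>\pi\epsilon$ contains a closed geodesic. A strictly concave function would restrict to a strictly concave periodic function on it, so no uniformly strictly concave function exists on a definite ball.
\item The lifts $\sum_j\varphi(\mathrm{dist}_{q_j})$ of Perelman's functions from a $k$-dimensional limit have gradients spanning only $k$ directions up to $o(1)$. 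They are in fact convex along the collapsing fibres, where $\partial_\theta^2\,\mathrm{dist}_{q_j}\approx 1/|xq_j|>0$.
\item Strict concavity of $h$ constrains $\nabla^2h$, not where $\nabla h$ points.
\item A single field $\nabla h$ can never dominate $Scal$: $Ric$ vanishes on the flat directions of $S^2\times\bR$.
\end{itemize}

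\textbf{The integration by parts does not close, and the missing idea is induction through level sets.} For semiconcave $h$ with $\int|\Delta h|$ bounded, neither $\int(\Delta h)^2$ nor $\int|\nabla^2 h|^2$ is controlled. For example, $h_k=-\sqrt{x_1^2+k^{-2}}$ is concave with $\int|\Delta h_k|$ bounded on a unit cube, yet $\int(\Delta h_k)^2\sim k$. For genuinely nonsmooth $h$, the singular part of $\Delta h$ cannot be squared at all. Only $G=(\Delta h)^2-|\nabla^2h|^2=2\sigma_2(\nabla^2 h)$ actually enters. By the very Bochner identity you use, bounding $\int G$ is equivalent to bounding $\int Ric(\nabla h,\nabla h)$, so the argument is circular.

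The paper breaks the circle with the Gauss equation on the level sets $L$ of a concave $f$ with unit normal $\nu$:
\begin{itemize}
\item $Scal=Sc_L+2Ric(\nu,\nu)-G$, where $G=2\sigma_2(\mathrm{II})$.
\item The integrated Riccati identity (exact when $|\nabla f|\equiv 1$) turns $\int Ric(\nu,\nu)$ into $\int G$ plus mean-curvature terms on the two boundary level sets.
\item Since $sec\ge -1$, the Gauss formula gives $G\le Sc_L+(n-1)(n-2)$.
\end{itemize}
Together these give $\int Scal\le 2\int Sc_L+{}$(controlled terms). Convex level sets satisfy $sec_L\ge sec_M\ge -1$, so $\int Sc_L$ is bounded by the $(n-1)$-dimensional statement and the coarea formula. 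In the paper's three-dimensional argument, Gauss--Bonnet on the level spheres plays this role.

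This scheme needs Ricci control only in the normal direction, so no spanning is required. The thin directions are tangent to the level sets, and their curvature is absorbed into $Sc_L$. Your outline has no induction on dimension and no substitute for it, so as it stands it does not prove the theorem.
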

Note that, the second author \cite{LiNan2026} has generalized this theorem to spaces with boundary and singularities.
A few years ago, Petrunin and N. Nebedeva \cite{PetLeb2022} have proved that: In \eqref{introduction:GH}, if the limit space is an n dimensional Alexandrov space, then the curvature tensor of $M_i$ weakly converge to a measure-valued tensor on X. As a corollary, the measures $Scal\cdot vol$ on $M_i$ weakly converge to a locally finite signed measure $\mathcal{M}$ on X. And if $M_i$ are compact, $\int_{M_i} Scal \ dvol$ converge. See this paper and the second author's paper \cite{LiNanC2} for characterization of the limit measure. See also \cite{Naberconjecture} and \cite{KLPJEMS} for related discussions.

Let $(M,g)$ be a complete, noncompact, n dimensional Riemannian manifold with non-negative sectional curvature. For any $p \in M$, the asymptotic volume ratio (AVR) of $M$
$$
v(M):=\lim_{R\to\infty} \frac{volB(p,R)}{volB(0,R)}
$$
exists and $v(M)\ls 1$. If $v(M)>0$, then for $R\to \infty$, $(M,p,R^{-1}d)$ pointed Gromov-Hausdorff converge to the cone at infinity $(C(\Sigma),o)$, where o is the apex. It's known that $\Sigma$ is an n-1 dimensional Alexandrov space with curvature $\ge 1$ and 
\begin{equation}
\frac{vol(\Sigma)}{vol(S^{n-1}(1)}=v(M).
\end{equation}
Under the metric of $(M,R^{-1}d)$, the scalar curvature integral on 1-ball is just 
\begin{equation}\label{introduction:limitmeasure<c(n)}
\begin{array}{ll}
\int_{B^R(p,1)\subset (M,R^{-1}d)} Scal^R \ dvol^R&=R^{2-n}\int_{B(p,R)} Scal \ dvol\\
&\overset{\eqref{ls:PetSc}}{\le} C(n).
\end{array}
\end{equation}
By the results in \cite{PetLeb2022} mentioned above, 
\begin{equation}
\lim_{R\to +\infty}R^{2-n}\int_{B(p,R)} Scal \ dvol 
\end{equation}
exists and is just the limit measure for 1-ball $B(o,1)\subset C(\Sigma)$. By \eqref{introduction:limitmeasure<c(n)}, it's no more than $C(n)$.

If $1-v(M)$ is close to 0, $B(o,1)\subset C(\Sigma)$ is Gromov-Hausdorff closed to $B(0,1)\subset R^n$, then the limit measure for $B(o,1)$, $\mathcal{M}(B(0,1))$ should be closed to 0. So the question rises, what's the relationship between the limit measure for $B(o,1)$ and $1-v(M)$? 

For dimension 2, it's well known that the limit measure is
\begin{equation}
2\pi(1-v(M)),
\end{equation}
see the book \cite{AleZa1967} of A.Alexandrov and V.Zalgaller, Cohn-Vossen, Huber's theorem. For K\"ahler manifolds with certain condition, Gang Liu \cite{LiuGangcurvatureintegral} has got a sharp estimate for the scalar curvature integral in terms of $1-v(M)$. 

In this paper, we will prove that:
	\begin{thm}\label{thm:ls}
		There exists $C(n)$. Let $(M,g)$ be a complete, n-dimensional noncompact Riemannian manifold with nonnegative sectional curvature, without boundary. Let $p\in M$, $B(0,R)\subset R^n$ and  
		\begin{equation}
		v(M)=\lim_{R\to \infty} \frac{volB(p,R)}{volB(0,R)}
		\end{equation}
		be the asymptotic volume ratio. 
		Then for any $R>0$,
		\begin{equation}\label{ls:thm}
			R^{2-n}\int_{B(p,R)} Scal\ dvol \ls C(n)(1-v(M)).
		\end{equation}
	\end{thm}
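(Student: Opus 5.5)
The plan is to split into two regimes according to the size of $1-v(M)$ and to use Petrunin's bound \eqref{ls:PetSc}, in its scale-invariant form \eqref{introduction:limitmeasure<c(n)}, only in the regime where $1-v(M)$ is not small. Both sides of \eqref{ls:thm} are invariant under rescaling $d\mapsto R^{-1}d$. If $1-v(M)\gs \epsilon(n)$, then \eqref{introduction:limitmeasure<c(n)} gives
\[
R^{2-n}\int_{B(p,R)}Scal\ls C(n)\ls \frac{C(n)}{\epsilon(n)}\,(1-v(M)).
\]
So the real content is a \emph{linear} estimate when $v(M)$ is close to $1$. For that regime I would not use Gromov--Hausdorff almost-rigidity, since it only gives qualitative smallness. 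Instead I would extract the scalar curvature directly from the deficit in Bishop--Gromov monotonicity.

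\textbf{Step 1 (pointwise Riccati inequality).} Fix $x$ and a unit vector $u\in S_xM$, and let $\gamma_u(t)=\exp_x(tu)$. Write $\mathcal{A}(t,u)$ for the volume density in polar coordinates up to the cut time $c(u)$, and $m=(\log\mathcal{A})'$ for the mean curvature of the distance sphere. The Riccati inequality $m'+m^2/(n-1)+Ric(\gamma_u',\gamma_u')\ls 0$, multiplied by $t^2$ and integrated, gives
\[
m(t)-\frac{n-1}{t}\ls -\frac{1}{t^2}\int_0^t s^2\,Ric(\gamma_u'(s),\gamma_u'(s))\,ds .
\]
Integrating once more, $\log\bigl(\mathcal{A}(T,u)/T^{n-1}\bigr)\ls -\int_0^T \phi_T(s)\,Ric(\gamma_u',\gamma_u')\,ds$ for an explicit weight $\phi_T(s)\gs 0$ that is comparable to $s$ on $[0,T/2]$. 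For $u$ with $c(u)<T$ the density is $0$ after the cut time, which only helps the inequality.

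\textbf{Step 2 (the volume deficit controls the Ricci integral).} Since $Ric\gs 0$, the ratio $\theta_x(r)=vol B(x,r)/vol B(0,r)$ decreases from $1$ to $v(M)$ for every $x$, and $v(M)$ does not depend on $x$. Averaging Step 1 over $u\in S_xM$ and using $1-e^{-a}\gs c\min(a,1)$, I expect to get, for $T$ comparable to $R$,
\[
\int_{S_xM}\min\Bigl(1,\int_0^{T}\phi_T(s)\,Ric(\gamma_u'(s),\gamma_u'(s))\,ds\Bigr)\,du\ls C(n)\,(1-\theta_x(T))\ls C(n)(1-v(M)).
\]
In the regime $1-v(M)<\epsilon(n)$, the truncation at $1$ should be removable on most directions. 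On the remaining exceptional set of directions I would use Petrunin's bound to absorb the contribution. This truncation and exceptional-set bookkeeping is the step I expect to be the main obstacle. A first attempt is to use the sectional curvature bound, through Toponogov, to show that $\int_0^T\phi_T\,Ric$ cannot be large along a set of directions of small but positive measure without making the volume drop by a definite amount.

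\textbf{Step 3 (Liouville/Fubini).} Integrate the Step 2 inequality over $x\in B(p,R)$. By invariance of the Liouville measure under the geodesic flow, $\int_{B(p,R)}\int_{S_xM}\int_0^T\phi_T(s)\,Ric(\gamma_u'(s))\,ds\,du\,dx$ is at least $\int_{B(p,R/2)}\bigl(\int_{S_yM}Ric(w,w)\,dw\bigr)\,dy$ times $\int_0^T\phi_T \gs cR^2$, up to the loss from geodesics leaving $B(p,R)$. Taking $T=R/4$ keeps them inside $B(p,2R)$. Since $\int_{S_yM}Ric=\frac{vol(S^{n-1})}{n}Scal(y)$, this gives
\[
R^2\int_{B(p,R/2)}Scal\ls C(n)\,vol B(p,R)\,(1-v(M))\ls C(n)R^n(1-v(M)).
\]
Replacing $R$ by $2R$ gives \eqref{ls:thm}.
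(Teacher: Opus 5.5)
Your first regime, the Riccati computation in Step~1 (which gives $\phi_T(s)=s(1-s/T)$), and the Liouville step are fine. The gap is Step~2, and it is the whole content of the theorem, not bookkeeping.

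The volume deficit caps each direction's contribution at $1$. So it only controls $\int_{S_xM}\min(1,I_T(u))\,du$, where $I_T(u)=\int_0^T\phi_T\,Ric(\gamma_u',\gamma_u')\,ds$, whereas Step~3 needs the untruncated, linear quantity. These two can differ wildly, because the Ricci mass and the deficit sit on \emph{disjoint} sets of directions.

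To see this, take $M^2$ a cone of angle $2\pi(1-\sigma)$ smoothed inside $B(o,\rho)$, so $v(M)=1-\sigma$ and $K\sim\sigma/\rho^2$ there. Fix $x$ with $|xo|=d\gg\rho/\sigma$.
\begin{itemize}
\item The directions meeting $B(o,\rho)$ have measure $\sim\rho/d$ and carry $I_T\sim d\sigma/\rho\gg 1$. Hence $\int_{S_x}I_T\,du\sim\sigma$, but $\int_{S_x}\min(1,I_T)\,du\sim\rho/d$.
\item The deficit $\approx\sigma$ of $\partial B(x,T)$ comes instead from the wedge of directions (measure $\approx 2\pi\sigma$) that are cut on the ray opposite to $x$. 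These pass at distance $\sim\sigma d\gg\rho$ from $o$, where $I_T=0$.
\end{itemize}
Averaging over $x$ does not help: $\int_{B(p,R)}\int_{S_x}\min(1,I_T)=O(\rho R+\rho^2/\sigma)\to 0$ as $\rho\to 0$, while the Liouville integral you need is $\sim\sigma R^2$. Taking the product with $\mathbb{R}^{n-2}$ gives the same picture in every dimension.

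So ``the truncation is removable on most directions'' is true but irrelevant, since the mass lives on the exceptional directions. The Toponogov mechanism you suggest also cannot work. In the example, $I_T$ is arbitrarily large (as $\rho\to 0$) on a set of directions of small positive measure. Yet the volume drops only by $\sigma$, independently of $\rho$, and none of that drop occurs along those directions.

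Nor can Petrunin's bound absorb the exceptional set. It is scale-invariant but carries no factor $1-v(M)$. Covering the curvature support (a $\rho$-ball, or a thickened $(n-2)$-plane in the product) by $r$-balls gives $C$ or $CR^{n-2}$, never $C(1-v(M))R^{n-2}$. What is missing is a \emph{global} mechanism that sees the total curvature however it concentrates.

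The paper supplies this as follows:
\begin{itemize}
\item The cross-section $\Sigma$ of the cone at infinity, with $vol(\Sigma)/vol(S^{n-1})=1-\delta$, carries an $(n,2\pi\delta)$ strainer (Theorem~\ref{thm:1-delta-ndelta}). This is linear in $\delta$, via the radius bound of \cite{GP}.
\item This strainer lifts to an $(n,C\delta)$ strainer at $p$ in $(M,R^{-1}d)$.
\item Theorem~\ref{thm:technical} then takes smoothed strictly concave functions $f_i\approx |a_ix|+100\delta\phi(x)$. Their level sets are nonnegatively curved and again carry $(n-1,C\delta)$ strainers.
\item Combining the pointwise bound $Scal\le c(n)\sum_i Sc(S_i)$, the coarea formula and induction on dimension reduces everything to dimension $2$. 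There the base case is Gauss--Bonnet (Theorem~\ref{thm:dim2}), which is exactly the global identity that a direction-by-direction Riccati argument cannot reproduce.
\end{itemize}
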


\begin{remark}
By \eqref{ls:PetSc} and \cite{LiNan2026}, we actually need to prove that when $v(M)$ is close to 1, $(\ref{ls:thm})$ holds. This theorem is implied by a technical theorem \eqref{thm:technical} (local version), see section 5. This theorem implies that when $v(M)>0$, the limit measure for $B(o,1)$ can be controlled by $1-v(M)$ up to a constant $C(n)$. $1-v(M)$ can't be replaced by $(1-v(M))^a$ for $a<1$. 
\end{remark}

For complete $n-1$ dimensional Riemannian manifold with sectional curvature $\ge 1$, we can get the following theorem:
\begin{thm}\label{thm:>1}
	There exists $C(n)>0$, let $\Sigma$ be a complete n-1 dimensional Riemannian manifold with sectional curvature $\ge 1$. let
	\begin{equation}
		v:=\frac{vol(\Sigma)}{vol(S^{n-1}(1))},
	\end{equation}
	Then 
	\begin{equation}\label{Induction:intSigma}
		\int_{\Sigma} (Scal-(n-1)(n-2)) |\le C(n)(1-v).
	\end{equation}
\end{thm}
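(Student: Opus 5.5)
The most economical route is to deduce Theorem \ref{thm:>1} from Theorem \ref{thm:ls} by passing to the Euclidean cone over $\Sigma$. Write $m=n-1=\dim\Sigma$. The cone $C(\Sigma)=(0,\infty)\times\Sigma$ with metric $dr^2+r^2g_\Sigma$ has nonnegative sectional curvature because $\sec_\Sigma\ge 1$. For the warped product with warping function $r$, the scalar curvature on the smooth part is
\begin{equation*}
Scal_{C}(r,x)=\frac{Scal_\Sigma(x)-m(m-1)}{r^2}.
\end{equation*}
Integrating over $B(o,R)$ with $dvol=r^{m}dr\,dvol_\Sigma$ gives
\begin{equation*}
R^{2-(m+1)}\int_{B(o,R)}Scal_C\,dvol=\frac{1}{m-1}\int_\Sigma\bigl(Scal_\Sigma-m(m-1)\bigr)dvol_\Sigma .
\end{equation*}
The asymptotic volume ratio of the cone is exactly $v=vol(\Sigma)/vol(S^{m}(1))$, so Theorem \ref{thm:ls} in dimension $m+1$ formally yields the claim. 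The integrand is nonnegative, so the absolute value in \eqref{Induction:intSigma} is harmless. The difficulty is that $C(\Sigma)$ is singular at the apex, unless $\Sigma$ is the round sphere, so Theorem \ref{thm:ls} does not apply verbatim.

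I would remove the singularity by smoothing. Replace $r^2$ by $f(r)^2$, where $f$ is smooth, $f(r)=\sin r$ near $0$ up to rescaling (more precisely $f(r)=r$ near $0$ gives a smooth pole only for the round sphere, so instead take $f$ with $f(0)=0$, $f'(0)=1$, $f''\le 0$, $f'\to\lambda$), and $f(r)=\lambda r+c$ for large $r$, with $\lambda<1$ close to $1$. Capping off needs the link at the pole to be a round sphere, which fails for general $\Sigma$. The standard fix is to use instead the metric $dr^2+f(r)^2g_\Sigma$ on $\mathbb R\times\Sigma$, a doubly infinite warped product with $f>0$ convex-free concave, $f\equiv\epsilon$ near some region. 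The cleanest choice is $M_\epsilon=\bR\times\Sigma$ with $f(r)=\sqrt{\lambda^2r^2+\epsilon^2}$. The sectional curvatures are then:
\begin{itemize}
\item radial: $-f''/f\le 0$, which is bad, so it should be avoided.
\end{itemize}
Better is to use the known fact, via Theorem \ref{thm:technical} or \cite{LiNan2026}, that the estimate holds for Alexandrov spaces and limits. Concretely, take smooth manifolds $M_i$ with $\sec\ge0$ converging noncollapsed to $C(\Sigma)$, and show that the scalar measures converge. This follows from \cite{PetLeb2022}, since the limit measure on the regular part of $C(\Sigma)$ agrees with the smooth density $Scal_C\,dvol$ and the apex carries measure controlled by scaling. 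Alternatively, apply Theorem \ref{thm:technical}, the local version, directly to the annulus region $\{1\le r\le 2\}$ of $C(\Sigma)$, which is a smooth Riemannian manifold with $\sec\ge0$ and volume ratios close to Euclidean.

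The order of steps is: (1) compute the cone scalar curvature and reduce \eqref{Induction:intSigma} to a bound on $\int_{B(o,2)\setminus B(o,1)}Scal_C$; (2) show that the annulus satisfies the hypotheses of the local technical theorem, with volume deficit comparable to $1-v$, using Bishop--Gromov on the cone; (3) conclude, reducing to $v$ close to $1$ via Petrunin's bound \eqref{ls:PetSc} applied to $\Sigma$ itself. That bound gives $\int_\Sigma Scal\le C(n)$ when $\sec\ge1$ and $\operatorname{diam}\le\pi$, so the estimate is trivial for $1-v$ bounded below.

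The main obstacle is step (2). The local theorem must be applicable away from the apex with hypotheses expressed purely through the volume ratio, and I would need to check exactly what Theorem \ref{thm:technical} requires.
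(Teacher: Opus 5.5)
Your reductions are fine and agree with the paper. The cone identity is Proposition \ref{Propsition:C(Sigma)}, and $Scal_\Sigma-(n-1)(n-2)\ge 0$ holds pointwise. Self-similarity reduces everything to one annulus; the paper uses the same identity to dispose of $B(p,\epsilon)$, and for $n=3$ Gauss--Bonnet gives $8\pi(1-v)$ outright. Petrunin's bound handles $1-v$ bounded below.

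After these reductions, however, the whole content of the statement is the cone estimate, Theorem \ref{Theorem:C(Sigma)}, and your proposal does not prove it. The approximation route presupposes smooth $M_i$ with $\sec\ge 0$ converging noncollapsed to $C(\Sigma)$. You give no construction, and none is known in general. For $\Sigma=\mathbb{RP}^{n-1}$ such $M_i$ cannot exist, since by Perelman's stability theorem these limits are topological manifolds. This is exactly why the paper remarks that Theorem \ref{Theorem:C(Sigma)} does not follow from its smooth results.

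Your step (2) is not a matter of checking hypotheses; it is where the missing idea lies. To get a bound linear in $1-v$, Theorem \ref{thm:technical} needs an $(n,C(1-v))$ strainer. A volume deficit only yields $\kappa(1-v)$ via \cite{Burago1992ad}; the linear quality comes from Theorem \ref{thm:1-delta-ndelta}.

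The theorem also controls only balls of radius $s<C_1\delta^{n-2}l$. Summing $C_2\delta s^{n-2}$ over the $\sim s^{-n}$ balls needed to cover $\{1\le r\le 2\}$ gives $\sim\delta s^{-2}$. So you must take $s\sim 1$ and strainers $a_i=(\xi_i,L)$ with $L\gtrsim\delta^{-(n-2)}$. The proof of Theorem \ref{thm:technical} then uses smoothed level sets of $f_i\approx dist_{a_i}$, and the $(n-1)$-dimensional statement on them, over regions of size $\sim\delta L\gg 1$ when $n\ge 4$.

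Take a point $x$ whose direction $\uparrow_o^x$ is (nearly) orthogonal to some $\xi_i$. The level set of $dist_{a_i}$ through $x$ passes (nearly) through the apex, where Greene--Wu smoothing is unavailable and the level set degenerates. So with a single strainer family the argument breaks down on a whole slab of the annulus.

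Handling exactly this is the substance of the paper's proof of Theorem \ref{Theorem:C(Sigma)}:
\begin{itemize}
\item the strainers are based at the apex;
\item $\varphi_i=|a_ix|-|a_ip|+c_3(n)\delta^{n-2}\phi$ is smoothed only on the apex-free regions $\{|a_ix|-|a_ip|<-\tau\}$, and likewise for $b_i$;
\item the equatorial slabs $D_i$, where neither $f_i$ nor $g_i$ is usable, are discarded;
\item Lemma \ref{lemSn-1} supplies $n$ rotated frames, so every point of $B(p,r_0)\setminus B(p,\epsilon)$ avoids the slabs of at least one frame.
\end{itemize}
Without this device, or a genuinely local version of Theorem \ref{thm:technical} valid on the singular cone, your step (2) is unsupported.
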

Note that \eqref{Induction:intSigma} is equivalent to 
\begin{equation}
	|\int_{\Sigma} Scal \ dvol-\int_{S^{n-1}(1)}(n-1)(n-2) \ dvol |\le C_1(n)(1-v)
\end{equation}
for some constant $C_1(n)$.

\textbf{About the proof} 

Let us describe the ideas of our proof. 
If $v(M)<1-\epsilon_0$, then $(\ref{ls:thm})$ follows from $\eqref{ls:PetSc}$. So we actually need to prove that when $v(M)$ is close to 1, $(\ref{ls:thm})$ holds. For dimension 2 and 3, by Bochner formula, Toponogov comparison and Bishop-Gromov volume comparison, we can prove that:
\begin{thm}\label{thm2he3}
There exists $C>0$, if $M$ is a complete Riemannian manifold with non-negative sectional curvature, dimension 2 or 3. Then for $p\in M$. 
\begin{equation}
\int_{B(p,\frac12)} Scal \ dvol \le C(1-\frac{volB(p,1)}{volB(0,1)}).
\end{equation}
\end{thm}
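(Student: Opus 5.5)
We will reduce the estimate to the case where the volume deficit
$$\delta:=1-\frac{volB(p,1)}{volB(0,1)}$$
is small. If $\delta\ge\epsilon_0$ for a fixed $\epsilon_0>0$, then \eqref{ls:PetSc} already gives $\int_{B(p,\frac12)}Scal\le C\le (C/\epsilon_0)\,\delta$.

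So assume $\delta<\epsilon_0$. We compare the volume deficit with curvature through the Jacobian of the exponential map. In polar coordinates around any $x\in B(p,\frac12)$, write $dvol=\mathcal{A}(t,\theta)\,dt\,d\theta$. The Riccati equation for the shape operator of distance spheres, which is the Bochner formula applied to $r_x=d(x,\cdot)$, gives along each minimal geodesic
$$\partial_t\Big(\frac{\mathcal{A}}{t^{n-1}}\Big)\le -\frac{\mathcal{A}}{t^{n-1}}\Big(\int_0^t\frac{s^2}{t^2}Ric(\dot\gamma,\dot\gamma)\,ds\Big)$$
up to harmless constants. More simply, $\log(\mathcal{A}/t^{n-1})\le -c\int_0^t \frac{s^2}{t^2}Ric\,ds$, using $|{\rm II}|^2\ge(\mathrm{tr})^2/(n-1)$. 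Integrating over directions and $t\in[0,1]$, and using Bishop–Gromov monotonicity together with $volB(x,\frac12)\ge(1-C\delta)\,volB(0,\frac12)$ (which holds for $x\in B(p,\frac12)$ by Bishop–Gromov and inclusion of balls), we obtain
$$\int_{B(x,\frac12)} Ric(\nabla r_x,\nabla r_x)\,\rho(r_x)\,dvol\le C\,\delta ,$$
with a weight $\rho(r)\sim r^{2-n}$. The cut locus only helps, since the Jacobian estimate is one-sided and the cut locus has measure zero.

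Next we convert the directional Ricci integrals into scalar curvature. Since $sec\ge0$, we have $Ric\ge0$ and $Scal=\sum_i Ric(e_i,e_i)$. For a point $y$ and an orthonormal frame at $y$, Toponogov comparison together with almost-maximal volume lets us choose base points $x_1,\dots,x_n$ at distance about $\frac14$ from $y$ so that the directions $\nabla r_{x_i}(y)$ form an almost orthonormal frame. Because $B(p,1)$ is $\Psi(\delta)$-GH close to a Euclidean ball, directions are almost uniformly distributed. In dimension 2 this is not even needed, since $Ric(v,v)=K|v|^2$. In dimension 3 we use $Scal=\frac{3}{4\pi}\cdot 2\int_{S^2}Ric(v,v)\,dv$. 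We then average the inequality from the previous step over base points $x\in B(p,\frac12)$ and apply Fubini. The averaged weight $\int_x Ric_y(\nabla r_x,\nabla r_x)\rho\,dx$ dominates $c\,Scal(y)$, provided the pushforward of the base-point measure to the unit sphere at $y$ is comparable to the round measure.

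We expect this comparability to be the main obstacle. The pushforward is given by the inverse Jacobian of $\exp_y$ restricted to the unit tangent sphere, and $sec\ge0$ gives only $\mathcal{A}\le t^{n-1}$. That bound is exactly the wrong direction for a lower bound on the density of directions, in general. We would try to show that the set of directions at $y$ reaching distance $\frac14$ minimally has measure $\ge(1-C\delta)\,vol(S^{n-1})$, using volume deficit at $y$. Even if pointwise density fails, the integral of the nonnegative quadratic form $Ric_y$ over a subset of the sphere of almost full measure controls its trace in low dimension, since a nonnegative form's trace is bounded by a constant times its average over any set of measure $\ge\frac12$ of the sphere. The restriction to dimensions 2 and 3 should enter exactly here, where Ricci determines the full curvature tensor and this trace argument closes. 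Summing up gives $\int_{B(p,\frac12)}Scal\le C\delta$.
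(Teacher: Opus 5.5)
There is a genuine gap. It is in your first step, not where you placed the obstacle.

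\textbf{The first step does not follow from the Riccati inequality.} Integrating your (correct) inequality along a minimal geodesic gives $\mathcal A(t,\theta)\le t^{n-1}e^{-X_\theta(t)}$ with $X_\theta(t)=\int_0^t s(1-s/t)\,Ric(\dot\gamma_\theta,\dot\gamma_\theta)\,ds$. So the volume deficit controls only $\int_{S^{n-1}}(1-e^{-X_\theta})\,d\theta$, which is comparable to $\int\min(X_\theta,1)\,d\theta$. To reach $\int X_\theta\,d\theta$, which is your weighted integral of $Ric(\nabla r_x,\nabla r_x)$, you would need $X_\theta=O(1)$. That fails exactly when curvature concentrates.

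Take a surface with a cone point of angle deficit $\alpha$, smoothed in an $\epsilon$-disk, at distance $d$ from $x$. The geodesics through the cap occupy directions of measure $\sim\epsilon/d$ and have $X_\theta\sim\alpha d/\epsilon$. Hence $\int\min(X_\theta,1)\,d\theta\lesssim\epsilon/d\to0$, while $\int X_\theta\,d\theta\sim\alpha\sim\delta$. The deficit here comes from directions within angle $\sim\alpha$ of the cone point: their geodesics stay flat but lose minimality behind it. A per-geodesic Jacobian bound cannot turn this cut-locus loss into curvature, so ``the cut locus only helps'' is exactly where the argument breaks. This is not a side issue: for $n=2$ and $x=p$ your first step is already the whole theorem.

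\textbf{What is missing is a relation that is linear in the curvature.} The paper gets it by integrating over whole level sets of a smoothing $f_\eta$ of $r_p$ and using Gauss--Bonnet.
\begin{itemize}
\item For $n=2$: $\int_{\{f_\eta<t\}}K+\int_{f_\eta^{-1}(t)}k_g\le 2\pi$. On average $\int k_g$ is bounded below by the growth of the length of level curves, and Bishop--Gromov controls that growth up to $C\delta$.
\item For $n=3$: the integrated Bochner formula gives $\int_{f^{-1}(a,b)}Scal=\int G+\int Sc+2\int_{f^{-1}(a)}H-2\int_{f^{-1}(b)}H$. One then uses $G\le Sc$ (from $sec\ge0$), $\int_{f^{-1}(t)}Sc=8\pi$ on the level $2$-spheres, $\int_{f^{-1}(b)}H\ge 8\pi b-C\delta$ from area growth, and a Hessian-comparison bound at a tiny level $a$.
\end{itemize}
In both cases Gauss--Bonnet supplies the topological normalization ($2\pi\chi$, respectively $\int Sc=8\pi$). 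That is what makes the estimate linear in $\delta$, and it is where $n\le 3$ enters.

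Your proposal never uses $n\le3$. $Scal=\mathrm{tr}\,Ric$ holds in every dimension, and your trace argument (a nonnegative form integrated over half the sphere dominates $c(n)$ times its trace) is dimension-free. So if step one were valid you would get this local estimate in all dimensions, which signals that the real difficulty is hidden there.

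\textbf{Two smaller remarks.}
\begin{itemize}
\item The obstacle you flagged is not really one. Once the deficit at $y$ is small, monotonicity of $\mathcal A_y(s,\theta)/s^{n-1}$ and almost-full area at radius $\frac14$ give $\mathcal A_y(s,\theta)\ge\frac12 s^{n-1}$ for all $s\le\frac14$, off a small set of directions.
\item Bishop--Gromov with inclusion of balls gives only $volB(x,r)/volB(0,r)\ge(1-\delta)(1+|px|)^{-n}$ at the other base points, not $1-C\delta$. The linear deficit you need at each $x$ requires a separate argument.
\end{itemize}
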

\begin{remark}
Since we can't find a reference which includes this theorem for dimension 2, we give a proof in this paper. However, there may be other proofs we have not found. For dimension 2 and 3, this theorem is stronger than Theorem \eqref{thm:ls}. The condition "complete" is only used to ensure that Toponogov comparison holds for large geodesic triangles, and Bishop-Gromov volume comparison holds. For dimension 2, if we drop completeness, then the theorem may not hold, see counter examples in \cite{LiNan2026}.
\end{remark}

For the general dimension in theorem \ref{ls:thm}, we consider the cone at infinity of $M$, denoted by $C(\Sigma)$. It's a cone over an n-1 dimensional Alexandrov space $\Sigma$. And 
\begin{equation}
\frac{vol(\Sigma)}{vol(S^{n-1})}=\lim_{R\to \infty} \frac{volB(p,R)}{volB(0,R)}=v(M).
\end{equation}
Denote by $\delta=1-v(M)$. Based on the proof and results of \cite{Burago1992ad},  we will prove the following quantitative version of theorem on $(n,\delta)$ strainer. This theorem is the key ingredient in the proof main theorems in this paper.
\begin{thm}[Compare \cite{Burago1992ad}]
There exits $C,C(n)>0$:Let $\Sigma$ be an n-1 dimensional Alexandrov space with curvature $\ge 1$. If 
\begin{equation}
\delta:=1-\frac{vol(\Sigma)}{vol(S^{n-1}(1))}
\end{equation}
is sufficiently small, then $\Sigma$ has an $(n,C\delta)$ strainer $\{(a_i,b_i)\}_{i=1}^n$. And there exists $C(n)$ and a $1\pm C(n)\delta$ bi-Lipschitz homeomorphism $F:\Sigma \to S^{n-1}$. 
\end{thm}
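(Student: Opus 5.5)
We follow the scheme of Burago--Gromov--Perelman, where almost maximal volume forces the existence of many almost orthogonal strainers, but we track the error linearly in $\delta$. Throughout, write $k=n-1=\dim\Sigma$. The key quantitative tool is Bishop--Gromov in the sharp form on a space with curvature $\ge 1$. For every $x\in\Sigma$ the map $\exp_x$ (gradient-exponential in the Alexandrov setting) is a volume non-increasing surjection from $B(o,\pi)\subset T_x\Sigma$ onto $\Sigma$. Therefore
\[
\mathrm{vol}(\Sigma_x)\ge \mathrm{vol}(\Sigma)\cdot \frac{\mathrm{vol}(S^{k-1})}{\mathrm{vol}(S^{k})}, \qquad\text{so}\qquad 1-\frac{\mathrm{vol}(\Sigma_x)}{\mathrm{vol}(S^{k-1})}\le \delta
\]
at every point. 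The same monotonicity also shows that every $x$ has a point at distance $\ge \pi-C\delta^{1/k}$. We need more than this: a ``linear'' antipode statement. For each $x$ and each direction $\xi\in\Sigma_x$, there should be a point $y$ with $|xy|\ge \pi - C\delta$ and with $\angle(\uparrow_x^y,\xi)$ controlled. We get this from the excess of the comparison function. Consider
\[
f(\xi)=\pi-\sup\{t:\exp_x(t\xi)\ \text{defined along a shortest path}\}.
\]
Volume comparison in polar coordinates gives $\int_{\Sigma_x} \phi(f(\xi))\,d\xi\le C\delta$, with $\phi(s)\sim s^{k}$ near $0$. This is only a mean bound. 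To reach linear closeness we combine it with Toponogov: if $f$ were large, at least $C\delta$, on one direction, then by the concavity of $\mathrm{dist}$ and hinge comparison it would be comparably large on a cap of definite size, so the volume loss would be at least $c\,\delta'\cdot$(definite factor). This is the step I expect to be the main obstacle. The naive estimate only gives the loss in terms of $\delta'^{k}$, and we must instead exploit that the \emph{whole} cut locus deficiency spreads. The plan is to use the spherical suspension structure: $\Sigma$ is close to a $k$-sphere, so the set of points far from $x$ is a small ball around an almost-antipode $x'$. We then argue that the deficiency $\pi-|xx'|$ costs volume linearly. This follows from comparing $\Sigma$ with the spherical join, computed via the coarea formula for $\mathrm{dist}_x$: the level sets have $(k-1)$-volume at most $\mathrm{vol}(S^{k-1})\sin^{k-1}t$ and are supported on $[0,|xx'|]$, so $\delta\ge c(\pi-|xx'|)$.

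With the linear antipode lemma, we build the strainer inductively as in \cite{Burago1992ad}. Choose $a_1$ arbitrary and $b_1$ with $|a_1b_1|\ge\pi-C\delta$. Then $\Sigma$ is $C\delta$-close in the angle sense to a suspension over the ``equator'' $E_1=\{|a_1x|=\pi/2\}$. At a point of $E_1$, the space of directions again has volume defect $\le\delta$, so we can choose $a_2\in E_1$ and an almost-antipode $b_2$. Comparison angles then give $|\tilde\angle a_1 x a_2-\pi/2|\le C\delta$, using the first variation formula and the fact that all comparison triangles with perimeter deficiency $O(\delta)$ are $O(\delta)$-rigid. Repeating this $n$ times with constants depending on $n$ yields an $(n,C\delta)$ strainer at every point. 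Here $n=k+1$, matching the ambient sphere $S^{k}\subset\bR^{n}$.

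Finally, we define $F:\Sigma\to S^{n-1}\subset\bR^n$ by normalizing the map
\[
x\mapsto(\cos|a_ix|)_{i=1}^n .
\]
On the round sphere with $a_i=e_i$ this is exactly the identity. Since every $x$ is $(n,C\delta)$-strained, the differentials of $\cos|a_i\cdot|$ form a $C\delta$-almost orthonormal system on $T_x\Sigma$; here $\Sigma_x$ is itself $C\delta$-close to $S^{k-1}$ by the same argument applied one dimension down, which lets us induct on dimension. Hence $|\,|F(x)|-1|\le C\delta$, and after normalization $F$ is $1\pm C(n)\delta$ locally bi-Lipschitz. Because $\Sigma$ and $S^{n-1}$ have diameter $\le\pi$ and $F$ preserves distances to the $a_i$ up to $C\delta$, a degree and covering argument shows that $F$ is a homeomorphism with the global bi-Lipschitz bound.
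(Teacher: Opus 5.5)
Your architecture (volume $\Rightarrow$ linear almost-antipodes $\Rightarrow$ inductive strainer $\Rightarrow$ strainer map) matches the paper's. However, the step that carries all the quantitative content, the linear antipode lemma, is not established by your argument. Suppose $|xy|\le R$ for all $y$ and the level sets of $\mathrm{dist}_x$ have measure at most $vol(S^{k-1})\sin^{k-1}t$. Then the coarea formula gives
\[
vol(S^{k})-vol(\Sigma)\ \ge\ vol(S^{k-1})\int_R^{\pi}\sin^{k-1}t\,dt\ \approx\ \frac{vol(S^{k-1})}{k}\,(\pi-R)^{k},
\]
that is, $\delta\gtrsim(\pi-R)^k$. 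This is exactly the naive bound you wanted to beat. Likewise, a cap of definite size on which the cut deficiency is $\ge C\delta$ costs only $O(\delta^k)$.

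In fact no argument using only $vol(\Sigma_x)$ and $\max_y|xy|$ can give linearity. Take the crescent $\Sigma=S^{k-2}*S^1(\rho)$, the spherical join with a circle of length $2\pi\rho$. Then $vol(\Sigma)/vol(S^k)=\rho$, and a point $x$ on the circle has its farthest point at distance exactly $\pi\rho$, yet $\Sigma_x$ is the round $S^{k-1}$. The deficit comes from a set of directions of measure $\sim 1-\rho$ (those nearly orthogonal to the circle), along which geodesics are cut a definite distance before $\pi$.

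The paper obtains the linear statement from Grove--Petersen's volume comparison \`a la Alexandrov, $\mathrm{rad}\,\Sigma\ge\pi\,vol(\Sigma)/vol(S^{n-1})$, which is an equality for these crescents. It gives every point a partner at distance $\ge\pi(1-\delta)$. Without that input, or an equivalent global comparison, your construction only yields $(n,C\delta^{1/(n-1)})$ strainers and a $1\pm C\delta^{1/(n-1)}$ bi-Lipschitz map.

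Two further steps need repair. First, ``repeating this $n$ times'' requires, for $2\le i\le n-1$, a point $a_{i+1}$ with $\bigl|\,|a_ja_{i+1}|-\tfrac{\pi}{2}\bigr|\le C\delta$ for all $j\le i$ simultaneously. You never show that $E_1\cap\dots\cap E_i\neq\emptyset$, or even a $C\delta$-approximate version of this. The paper takes $a_{i+1}$ to be a maximum point of $\min_{j\le i}\{|a_jx|,|b_jx|\}$. It then combines the first variation formula, the near-roundness of $\Sigma_{a_{i+1}}$, and the fact that at most $i-1\le n-2$ constraints are active, to show this maximum is $\ge\frac{\pi}{2}(1-\delta)$. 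Also, what is produced is a strainer of $\Sigma$ in the global sense, not an $(n,C\delta)$ strainer at each point; the latter cannot exist in dimension $n-1$.

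Second, $n$ differentials on an $(n-1)$-dimensional tangent cone cannot be almost orthonormal. What you need is $\sum_i\sin^2|a_ix|\cos^2\angle(\uparrow_x^{a_i},\xi)=1+O(\delta)$ for unit $\xi$, uniformly, including near the $a_i,b_i$ where the angles are uncontrolled. This is what the paper's Appendix estimates supply: $\bigl|\sum_i\cos^2|a_ix|-1\bigr|<C(n)\delta$ and $\bigl|\sum_i\cos|a_ix|\cos|a_iy|-\cos|xy|\bigr|<C(n)\delta$, proved by induction through $\Sigma_{a_n}$ and then used at points of the cone $C(\Sigma)$.

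Your map $x\mapsto(\cos|a_ix|)_i$ is, up to sign, the $R\to\infty$ limit of the paper's map $x\mapsto(|A_ix|-|A_ip|)_i$ on $\Sigma\subset C(\Sigma)$, with $A_i=(a_i,R)$. The paper fixes $R=100\delta^{-1}$ and gets a $1\pm C(n)\delta$ bi-Lipschitz map on $B(p,2)\subset C(\Sigma)$ from the quantitative Burago--Gromov--Perelman argument. It then projects radially, which replaces your local-to-global covering argument.
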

We consider the Busemann functions w.r.t. the rays $(t,a_i)\subset C(\Sigma)$. Lift to $(M,R^{-1}d)$, by adding strictly concave functions, then use Green-Wu smoothing method, we get n smooth, strictly concave functions $f_{i,R}: M \to R$. Their level sets with the intrinsic metric are n-1 dimensional Riemannian manifold with non-negative sectional curvature. Denote by $Sc_i$ the Scalar curvature of $f_{i,R}^{-1}(t)$. Then by \cite{Petpoly2003}, the scalar curvature of $M$ can be controlled by the sums of the scalar curvature of these level sets, i.e.
\begin{equation}\label{induction:Scal<sumSci}
Scal \le C(n)\sum_{i=1}^n Sc_i.	
\end{equation}
Note that the volume ratio of these level sets also $\le C_1(n)\delta$ for some constant $C_1$. Then 
by induction hypothesis, 
\begin{equation}\label{induction:levelset}
\int_{B(q,1)\subset f^{-1}_{i,R(t)}}Sc_i \ dH^{n-1} \le C(n).
\end{equation}
By \eqref{induction:Scal<sumSci}, \eqref{induction:levelset} and coarea formula, we can prove the theorem \ref{thm:ls}.

Our proof of theorem \ref{thm:ls} is based on the idea of Perelman (see Petrunin's paper \cite{Petpoly2003}). The difference is, here we consider the relationship between curvature integral and volume ratio. We need to compare $1-v(M)$ and that of the level sets of $f_{i,R}$.

For the proof of theorem \ref{thm:>1}, let us first consider a typical case when the cross section $\Sigma$ is an n-1 dimensional smooth Riemannian manifold. It's known to experts that limit measure is not supported at the apex o of $C(\Sigma)$ for $n\ge 3$. Note that $B(o,1)\backslash o$ is a smooth Riemannian manifold, then the limit measure for $B(o,1)$ is 
\begin{equation}
	\int_{B(o,1)\backslash o} Scal \ dvol.
\end{equation}
By direct calculation (see Proposition \eqref{Propsition:C(Sigma)}), we can get 
\begin{equation}\label{=C(Sigma)}
	R^{2-n}\int_{B(o,R)\backslash o} Scal \ dvol 
	=\frac{1}{n-2}[\int_{\Sigma} (Scal_{\Sigma}-(n-1)(n-2)) \ dvol],
\end{equation}
where $Scal_{\Sigma}$ is the scalar curvature of $\Sigma$. This implies that the limit measure for $B(o,1)$ is just $\int_{\Sigma}(Scal_{\Sigma}-(n-1)(n-2)) \ dvol$ up to a constant $C(n)$.  On the other hand, we can prove the following theorem:
\begin{thm}\label{Theorem:C(Sigma)}
	For $n\ge 3$, there exists a constant $C(n)>0$. Let $\Sigma$ be an $n-1$ dimensional Riemannian manifold with sectional curvature $\ge 1$. Let $C(\Sigma)$ be the cone over $\Sigma$. Let $p$ be the apex of $C(\Sigma)$ and 
	\begin{equation}
		v=\frac{vol(\Sigma)}{vol(S^{n-1})}=\lim_{R\to \infty} \frac{volB(p,R)}{volB(0,R)}
	\end{equation}
	be the volume ratio.
	Then for any $R>0$, 
	\begin{equation}
		R^{2-n} \int_{B(p,R)\backslash p} Scal \ dvol \le C(n)(1-v).
	\end{equation}
\end{thm}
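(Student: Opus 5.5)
The plan is to reduce the statement to Theorem \ref{thm:>1} applied to the cross section $\Sigma$. This reduction goes through the explicit formula \eqref{=C(Sigma)}, which I would establish as Proposition \ref{Propsition:C(Sigma)}.

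\textbf{Step 1: scalar curvature of the cone.} On $C(\Sigma)\backslash p$ the metric is $dr^2+r^2 g_\Sigma$. I would use the standard warped-product formulas with warping function $r$:
\begin{itemize}
\item radial sectional curvatures vanish, since $r''=0$;
\item for orthonormal tangential vectors $X,Y$ on $\Sigma$, the curvature is $\bigl(K_\Sigma(X,Y)-1\bigr)/r^2$.
\end{itemize}
Summing over pairs of directions gives
\begin{equation*}
Scal(r,\theta)=\frac{Scal_\Sigma(\theta)-(n-1)(n-2)}{r^2}.
\end{equation*}
Since $K_\Sigma\ge 1$, this integrand is nonnegative. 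That is consistent with the cone having nonnegative curvature away from the apex.

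\textbf{Step 2: integration in polar coordinates.} The volume form is $r^{n-1}\,dr\,dvol_\Sigma$. By Fubini,
\begin{equation*}
\int_{B(p,R)\backslash p} Scal\ dvol=\int_0^R r^{n-3}\,dr\int_\Sigma\bigl(Scal_\Sigma-(n-1)(n-2)\bigr)dvol_\Sigma .
\end{equation*}
The radial integral equals $\frac{R^{n-2}}{n-2}$. It converges precisely because $n\ge 3$, which is where that hypothesis enters. Multiplying by $R^{2-n}$ yields \eqref{=C(Sigma)}, and the result is independent of $R$.

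\textbf{Step 3: apply Theorem \ref{thm:>1}.} I would apply Theorem \ref{thm:>1} to $\Sigma$, which is an $(n-1)$-dimensional Riemannian manifold with sectional curvature $\ge 1$ and volume ratio $v$. It gives $\int_\Sigma (Scal_\Sigma-(n-1)(n-2))\le C(n)(1-v)$. Dividing by $n-2$ finishes the proof.

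As a sanity check, for $n=3$ the manifold $\Sigma$ is $S^2$ or $\mathbb{RP}^2$ with $K\ge 1$. Gauss--Bonnet then gives $\int_\Sigma(2K-2)=2\bigl(2\pi\chi(\Sigma)-vol(\Sigma)\bigr)$. For $S^2$ this is exactly $8\pi(1-v)$, confirming both the formula and the sharp linear dependence on $1-v$.

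\textbf{Main obstacle.} Given the earlier results, the only substantive input is Theorem \ref{thm:>1}. If it had to be proved here, that would be the hard step. I would approach it by combining the quantitative strainer and bi-Lipschitz theorem (compare \cite{Burago1992ad}) with the level-set induction described in the introduction, comparing the volume deficit of the level sets with $1-v$.

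The cone computation itself is routine. The only care needed is to check that the apex carries no contribution, which holds because the integral over $B(p,\epsilon)\backslash p$ is $O(\epsilon^{n-2})\to 0$ for $n\ge 3$.
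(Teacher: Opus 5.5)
Your Steps 1--2 are correct. The warped-product computation gives $Scal=(Scal_\Sigma-(n-1)(n-2))/r^2$ on $C(\Sigma)\setminus p$, hence the identity \eqref{=C(Sigma)}, and for $n=3$ Gauss--Bonnet finishes the proof exactly as in the paper.

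The problem is Step 3, which is circular. Theorem \ref{thm:>1} is not an independent earlier result: the introduction states explicitly that it is \emph{derived} from Theorem \ref{Theorem:C(Sigma)} together with \eqref{=C(Sigma)}. By your own Step 2 the two statements are equivalent, up to the factor $\frac{1}{n-2}$. So invoking Theorem \ref{thm:>1} assumes what is to be proved, and for $n\ge 4$ the proposal contains no argument; the ``main obstacle'' you set aside is the whole theorem. You also cannot route through Theorem \ref{thm:ls}: as the paper remarks, it is not known that $C(\Sigma)$ is the cone at infinity of a smooth complete manifold with $sec\ge 0$.

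What is missing is a direct estimate on the cone. The new difficulty is the singular apex, where Green--Wu smoothing and the smooth local Theorem \ref{thm:technical} are unavailable. The paper's argument runs as follows.
\begin{enumerate}
\item With $\delta=1-v$, the quantitative strainer and bi-Lipschitz theorems give an $(n,C\delta)$ strainer $\{(a_i,b_i)\}$ at $p$.
\item Adding $c\,\delta^{n-2}\phi$ to $|a_ix|-|a_ip|$ and to $|b_ix|-|b_ip|$, with $\phi$ the strictly concave function of Theorem \ref{Theorem:concave}, produces strictly concave functions. These are smoothed only on the regions $\{|a_ix|-|a_ip|<-\tau\}$ and $\{|b_ix|-|b_ip|<-\tau\}$, which avoid the apex.
\item The level sets are nonnegatively curved and carry $(n-1,C\delta)$ strainers. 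So Theorem \ref{thm:technical} bounds their scalar curvature integrals by $Cr_0^{n-3}\delta$.
\item The pointwise inequality $Scal\le \tilde c_0(n)\sum_i Sc_{h_i}$ (with $h_i=f_i$ or $g_i$) and the coarea formula then bound $\int Scal$ off the ``equatorial'' sets $D_i$, where neither function of the $i$-th pair is usable.
\item A single frame cannot avoid the $D_i$, so $n$ rotated frames (Lemma \ref{lemSn-1}, Theorem \ref{Theorem:nsets}) are used to cover $B(p,r_0)\setminus B(p,\epsilon)$, with $\epsilon=c\,\delta^{n-3}r_0$.
\item Only the leftover ball $B(p,\epsilon)$ is handled by your cone formula. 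Its contribution scales like $\epsilon^{n-2}$, i.e.\ $O(\delta^{(n-3)(n-2)}r_0^{n-2})\le O(\delta\,r_0^{n-2})$ for $n\ge4$. The paper uses Petrunin's bound on $\int_\Sigma Scal_\Sigma$ here. Alternatively, since $I=r^{2-n}\int_{B(p,r)\setminus p}Scal$ is independent of $r$, the annulus bound gives $I\bigl(1-(\epsilon/r_0)^{n-2}\bigr)\le C\delta$ directly.
\end{enumerate}
So your computation is a genuine ingredient of the proof, but only for scale invariance and for the apex ball, not as a substitute for the main estimate.
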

By theorem \ref{Theorem:C(Sigma)} and equality \eqref{=C(Sigma)}, we can get theorem \ref{thm:>1}.
\begin{remark}
	Since we don't know that whether a cone over a manifold $C(\Sigma)$ must be the cone at infinity of a complete, non-compact, n dimensional Riemannian manifold with non-negative sectional curvature, theorem \ref{Theorem:C(Sigma)} can not be implied by theorem \ref{thm:ls}. Since in general, for the cone at infinity $C(\Sigma)$, $\Sigma$ is not necessarily a Riemannian manifold, theorem \ref{Theorem:C(Sigma)} does not imply theorem \ref{thm:ls}.
\end{remark}

The proof of theorem \ref{Theorem:C(Sigma)} is based on the proof of theorem \ref{thm:ls}, but more complicate. Because the apex is a singular point.

This paper is organized as follows:
	In section 2, we will review cone at infinity of Alexandrov spaces, volume ratio and Green-Wu's method on smoothing concave functions. In section 3, we will consider dimension 2 and 3. In section 4, we will prove some quantitative versions of theorems on Alexandrov geometry. Some Quantitative $(n,\delta)$ strainer theorems will be proved in this section and appendix. And also a theorem on the existence of strictly concave functions for our later use. 
	In section 5, we will prove a technical theorem, this theorem implies Theorem \ref{thm:ls}. In section 6, we will prove theorem \ref{Theorem:C(Sigma)}. 
	
\textbf{Important Notices}: For the proofs of every theorem, every property, every lemma, we just by ourselves, we didn't use AI at all. And this paper is written completely by ourselves, not a word written by AI. We have not uploaded this paper to any AI at all. This paper was finished written at June,2026. The first author of this paper has reported theorem \ref{thm:ls} and theorem \ref{thm2he3} and the proofs in the 2025 metric geometry meeting, Gui lin, Guangxi Normal University, June, 2025. And long before that time, a paper on theorem \ref{thm:ls} and theorem \ref{thm2he3} has been finished written. Later,  the first author of this paper reported theorem \ref{thm:ls} and \ref{thm2he3} and the proofs in the 2025 "SYSU geometric analysis seminar for young scholars", November 2025, Shenzhen, Sun Yat-sen University. In fact, we have proved theorem \ref{thm:ls} and written a shorter paper long before, and sent that paper via email to an expert in 2023.

	\section{Preliminaries}
	\subsection{Cone at infinity}
	For the definitions and basic properties of Alexandrov spaces, see e.g. \cite{Burago1992ad}.
	For properties of cone at infinity of Alexandrov spaces, see e.g. \cite{Shiomass1994}.
	Let $X$ be a complete, noncompact n dimensional Alexandrov space with curvature $\gs 0$, without boundary. Let $p\in X$ be a fixed point. For any pair of rays $\gamma_1,\gamma_2$ emanating from p, the comparison angle $\tilde{\angle} \gamma_1(s)p\gamma_2(t)$ is monotonically decreasing for both s and t, the number
	$$
	\angle_{\infty}(\gamma_1,\gamma_2):=\lim_{\substack{s\to\infty \\ t\to \infty} } \tilde{\angle} \gamma_1(s)p\gamma_2(t)
	$$
	exists. $\angle_{\infty}$ is a pseudo-distance for all rays emanating from p. The ideal boundary $X(\infty)$ of $X$ is defined to be the quotient metric space of the set of rays from p module the equivalence relation $\angle_{\infty}(\cdot,\cdot)=0$. Denote by $KX(\infty)$ the cone over the ideal boundary of $X(\infty)$, the vertex of this cone is denoted by $o$. As $t\to +\infty$, the pointed Gromov Hausdorff limit of $(t^{-1}X,p)$ exists and is unique. This limit is called the cone at infinity of $X$ and denoted by $Con_{\infty}X$. $Con_{\infty}X$ is an Alexandrov space with curvature $\gs 0$, dimension $\ls n$. It's well known that $Con_{\infty}X$ coincides with $K(X(\infty))$, i.e.
	\begin{prop}[Gromov, \cite{BGS1985}]
		$$
		Con_{\infty} X:=\lim_{t\to +\infty}(t^{-1}X,p)=(KX(\infty),o).
		$$
	\end{prop}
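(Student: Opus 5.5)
The plan is to build the limit cone $KX(\infty)$ and a comparison map into it by hand, and then check that the rescaled spaces $(t^{-1}X,p)$ converge to it in the pointed Gromov--Hausdorff sense. Since pointed limits are unique up to isometry, this also proves that $Con_{\infty}X$ exists and is unique.

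\textbf{Step 1: enough rays.} Fix $R>0$ and $\epsilon>0$. For each $x\in X$ with $|px|$ large, pick a ray $\gamma_x$ from $p$ such that the comparison angle $\tilde{\angle}(x,p,\gamma_x(s))$ becomes small as $|px|\to\infty$. Such rays are obtained as limits of minimizing segments $[p,y_k]$ with $y_k\to\infty$ chosen along the extension direction. The standard tool here is the asymptotic-angle argument for nonnegatively curved spaces: Toponogov comparison together with the monotonicity of comparison angles stated above.

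\textbf{Step 2: the approximation map.} Define $\Phi_t:B(p,tR)\subset X\to B(o,R)\subset KX(\infty)$ by
\[
\Phi_t(x)=\bigl(t^{-1}|px|,\,[\gamma_x]\bigr).
\]
To show it is an $\epsilon$-GH approximation, compare $t^{-1}|xy|$ with the cone distance
\[
\sqrt{a^2+b^2-2ab\cos\angle_\infty(\gamma_x,\gamma_y)},\qquad a=t^{-1}|px|,\ b=t^{-1}|py|.
\]

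The two directions of the distance estimate come from different arguments. The lower bound on $|xy|$ uses the monotonicity of $\tilde\angle\,\gamma_x(s)p\gamma_y(t)$: the comparison angle at $p$ for the triangle $xpy$ is at least $\angle_\infty$ up to an error that tends to $0$. The upper bound uses the reverse estimate, namely that $\tilde\angle(x,p,y)$ approaches $\angle_\infty(\gamma_x,\gamma_y)$ as the points go to infinity with fixed ratio. It then remains to show that $\Phi_t$ is almost surjective. Given a point $(r,[\gamma])$ of the cone, take $x=\gamma(tr)$; then $\gamma_x$ may be chosen as $\gamma$ itself.

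\textbf{Main obstacle.} The hard step is the upper bound on $|xy|$, equivalently the statement that asymptotic angles are realized uniformly on the scale $t$. Two issues must be handled. Points $x$ need not lie on rays, and the estimate must be uniform over the compact set of directions. I would first try to get uniformity by covering $X(\infty)$ with finitely many $\epsilon$-dense rays, which is possible because $X(\infty)$ is compact by a volume or packing argument. I would then apply Toponogov's comparison for quadrilaterals $p,x,\gamma_i(s),y$ with $s\gg t$. Everything beyond this is routine. The fact that the limit is an Alexandrov space with curvature $\ge0$ and dimension $\le n$ follows from the stability of lower curvature bounds under Gromov--Hausdorff limits.
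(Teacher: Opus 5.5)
The paper does not prove this proposition; it quotes it from Gromov/BGS. So your plan has to be judged on its own. The architecture is sound: the approximation map $x\mapsto(t^{-1}|px|,[\gamma_x])$ together with uniqueness of pointed limits. There is, however, a genuine gap in Step 1, and everything rests on it.

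What you need there is the asymptotic density of rays: for every $\epsilon>0$ there is $T$ such that every $x$ with $|px|\ge T$ has a ray $\gamma$ from $p$ with $|x\,\gamma(|px|)|\le\epsilon|px|$. Equivalently, $\tilde\angle\, xp\gamma(|px|)$ must be small. This is an \emph{upper} bound, and the tools you invoke point the wrong way:
\begin{itemize}
\item Monotonicity makes $\tilde\angle\, xp\gamma(s)$ nonincreasing in $s$. A ray that looks close to $x$ from far away ($s\gg|px|$) therefore says nothing about $\gamma(|px|)$.
\item The same objection applies to your quadrilateral comparison with $\gamma_i(s)$, $s\gg t$.
\item There is no ``extension direction'' along which to choose $y_k$. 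Geodesics need not extend past $x$ in an Alexandrov space, and in a manifold the extension stops minimizing, so nothing forces the segments $[py_k]$ to pass near $x$.
\end{itemize}
For points lying \emph{on} rays, both distance bounds in Step 2 follow at once from the definition of $\angle_\infty$ and the cosine law. So the ``main obstacle'' you name is exactly Step 1. A finite net of rays handles uniformity, not points off rays.

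The missing ingredient is the hinge form of Toponogov's theorem, which bounds a distance from above by the \emph{actual} angle at $p$: if $[px]$ and $[pz]$ make angle $\alpha$ at $p$, then $|xz|^2\le|px|^2+|pz|^2-2|px||pz|\cos\alpha$. Argue by contradiction:
\begin{itemize}
\item Take $x_k$ with $r_k=|px_k|\to\infty$ and $|x_k\gamma(r_k)|>\epsilon r_k$ for every ray $\gamma$.
\item Pass to a subsequence so that the segments $[px_k]$ converge to a ray $\gamma$ (Arzel\`a--Ascoli). Also arrange that the directions $\xi_k=\uparrow_p^{x_k}$ converge to some $\xi$ in the compact space $\Sigma_p$.
\item For $m>k$, let $z_m\in[px_m]$ with $|pz_m|=r_k$. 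The hinge comparison gives $|x_kz_m|\le 2r_k\sin\bigl(\angle(\xi_k,\xi_m)/2\bigr)$.
\item Letting $m\to\infty$ gives $|x_k\gamma(r_k)|\le 2r_k\sin\bigl(\angle(\xi_k,\xi)/2\bigr)=o(r_k)$, a contradiction.
\end{itemize}

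With this lemma the rest of your plan goes through. By the triangle inequality, both bounds on $t^{-1}|xy|$ reduce to points $\gamma(ta),\sigma(tb)$ on rays. There, $\tilde\angle\,\gamma(ta)p\sigma(tb)\ge\angle_\infty(\gamma,\sigma)$ by monotonicity. The reverse bound $\tilde\angle\,\gamma(ta)p\sigma(tb)\le\angle_\infty(\gamma,\sigma)+\epsilon$ holds uniformly for large $t$: take a finite net of rays at one fixed scale $T$ (the set of rays from $p$ is compact), then propagate closeness to all larger scales by monotonicity. The same compactness gives compactness of $X(\infty)$ directly, since $\angle_\infty(\gamma_n,\gamma)\le\tilde\angle\,\gamma_n(s)p\gamma(s)\to0$; no volume or packing argument is needed.
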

	Hence by \cite{Burago1992ad}, the ideal boundary $X(\infty)$ is a compact Alexandrov space with curvature $\gs 1$, dimension $\ls n-1$.
	
	\subsection{Volume ratio}
	Let $X$ be a complete, noncompact n dimensional Alexandrov space with curvature $\gs 0$, without boundary. Let $p\in X$ be a fixed point.
	Denote by $v(n,R)$ the volume of $B(0,R)\subset \bR^n$. Since $\frac{volB(p,R)}{v(n,R)}$ is monotonically decreasing, the asymptotic volume ratio (AVR) of $X$
	$$
	v(X):=\lim_{R\to\infty} \frac{volB(p,R)}{v(n,R)}
	$$
	exists and $v(X)\ls 1$. If $v(X)>0$, we say that $X$ has Euclidean volume growth. In this case, we have that
	\begin{equation}\label{in:egrow}
		volB(p,R)\gs v(X) v(n,R).
	\end{equation}
	For $t>0$, consider the scaling $(X, t^{-1}d)$, we often write $t^{-1}X$ instead. Denote by $B^t(p,R)$ the R-ball w.r.t. the metric $t^{-1}d$. Under the metric $d$, it's just the $tR$ ball, then
	\begin{equation}
		vol B^t(p,R)=t^{-n}volB(p,tR),
	\end{equation}
	\begin{equation}\label{eq:tratio}
		\begin{array}{ll}
			v(t^{-1}X)&=\lim\limits_{R\to \infty} \frac{volB^t(p,R)}{v(n,R)}\\
			&=\lim\limits_{R\to \infty}\frac{volB(p,tR)}{v(n,tR)}\\
			&=v(X).
		\end{array}
	\end{equation}
	If $X$ has Euclidean volume growth, then the dimension of cone at infinity $dim KX(\infty)=n$ and $dim X(\infty)=n-1$. For non-collapsed sequences of Alexandrov spaces, the Hausdorff volume converge. It's well known and easy to see that
	\begin{equation}\label{eq:infratio}
		v(KX(\infty))=v(X).
	\end{equation}

\subsection{Smoothing distance function}
We need to smooth concave functions on manifolds. The following theorem. See also \cite{AKP2008}.
	\begin{prop}[\cite{GWsub1973}]\label{thm:smooth}
		Let $f:\Omega\to \bR$ be a $\lambda$-concave function on an open subsets $\Omega$ of a Riemannian manifold. Then there is a sequence of nested open domains $\Omega_i$, with $\Omega_i \subset \Omega_j$ for $i<j$ and $\cup_i \Omega_i=\Omega$, and a sequence of smooth $\lambda_i$-concave functions $f_i:\Omega_i\to \bR$ such that:
		(1) on any compact subset $K\subset \Omega$, $f_i$ converge uniformly to $f$;
		(2) $\lambda_i \to \lambda$ as $i\to \infty$.
	\end{prop}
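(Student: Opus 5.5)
The plan follows the Riemannian convolution method of Greene--Wu. It has three steps: a local smoothing, an estimate of its concavity defect, and a gluing over a locally finite cover of $\Omega$.

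\emph{Local smoothing.} Fix a point of $\Omega$ and a small convex ball $U$ with $\bar U\subset\Omega$. On $U$ we use the fact that a $\lambda$-concave function is locally Lipschitz with a uniform constant $L_U$. Choose a radial mollifier $\varphi_\varepsilon$ on $\bR^n$ supported in the $\varepsilon$-ball with $\int\varphi_\varepsilon=1$, and set
\begin{equation*}
f_\varepsilon(x)=\int_{T_xM} f(\exp_x v)\,\varphi_\varepsilon(|v|)\,dv ,
\end{equation*}
defined on the shrunken set $U_\varepsilon=\{x: B(x,2\varepsilon)\subset U\}$. Smoothness of $f_\varepsilon$ follows by writing the integral in a trivialization of $TM$ and differentiating under the integral sign. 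Uniform convergence $f_\varepsilon\to f$ follows from $|f_\varepsilon-f|\le L_U\varepsilon$.

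\emph{Concavity defect.} The key estimate is that $f_\varepsilon$ is $(\lambda+o(1))$-concave on $U_\varepsilon$ as $\varepsilon\to0$. Take a unit-speed geodesic $\gamma$ in $U_\varepsilon$. For fixed $v\in T_{\gamma(0)}M$, let $v(t)$ be its parallel transport along $\gamma$. The curve $t\mapsto\exp_{\gamma(t)}v(t)$ is not a geodesic, but by Jacobi field comparison on $\bar U$ its second-order deviation from a geodesic is $O(|v|^2)$ times a curvature bound. Its speed is $1+O(|v|^2)$. Since $f$ is $\lambda$-concave and $L_U$-Lipschitz, this gives
\begin{equation*}
\frac{d^2}{dt^2}\Big|_{t=0} f(\exp_{\gamma(t)}v(t))\le \lambda + C(U)(1+L_U)|v|
\end{equation*}
in the barrier (support) sense. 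Parallel transport is an isometry, so $f_\varepsilon(\gamma(t))=\int f(\exp_{\gamma(t)}v(t))\varphi_\varepsilon(|v|)\,dv$ exactly. Averaging the barrier inequality against $\varphi_\varepsilon$ then yields $f_\varepsilon''\le\lambda+C\varepsilon$.

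\emph{Gluing, the main obstacle.} Take a locally finite cover $\{U_k\}$ of $\Omega$ by such balls, an exhaustion $\Omega_i$ with $\bar\Omega_i$ compact in $\Omega$, and a subordinate partition of unity $\{\rho_k\}$. For $g=\sum_k\rho_k f_{\varepsilon_k}$, using $\sum\rho_k=1$,
\begin{equation*}
\nabla^2 g=\sum_k\rho_k\nabla^2 f_{\varepsilon_k}+\sum_k\Big(2\,\nabla\rho_k\odot\nabla(f_{\varepsilon_k}-f_{\varepsilon_{k_0}})+(f_{\varepsilon_k}-f_{\varepsilon_{k_0}})\nabla^2\rho_k\Big),
\end{equation*}
where $k_0$ is any one of the finitely many indices active near the given point. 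The zeroth-order term is small by uniform convergence. The gradient term is the real difficulty: differences of mollifications of a Lipschitz function need not have small gradients near the kinks of $f$.

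I would avoid this term altogether by smoothing patch by patch, in the spirit of Greene--Wu. At step $k$, replace the current function $h_{k-1}$ by
\begin{equation*}
h_k=(1-\rho_k)\,h_{k-1}+\rho_k\,(h_{k-1})_\varepsilon .
\end{equation*}
Here $(h_{k-1})_\varepsilon$ is the same Riemannian mollification, now applied to the current function $h_{k-1}$. Near the support of the previous cut-offs, $h_{k-1}$ is already smooth, so $(h_{k-1})_\varepsilon\to h_{k-1}$ in $C^2$ there and the cross terms are small. Where $\nabla\rho_k\neq0$ but $h_{k-1}$ is still nonsmooth, I would first arrange the cover so that each cut-off's gradient region lies inside the smooth set produced by earlier steps, with nested balls of halved radii. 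If that arrangement fails, the fallback is to spend a slack $\eta_k$ with $\sum\eta_k$ small. Concretely, one compares $(h_{k-1})_\varepsilon$ and $h_{k-1}$ along geodesics using one-sided derivative bounds for concave functions. The aim is the pointwise bound $\nabla\rho_k\cdot\nabla((h_{k-1})_\varepsilon-h_{k-1})\le o(1)$ in the barrier sense; only a one-sided bound is needed, since only an upper bound on the Hessian is required.

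Choosing $\varepsilon_k\to0$ fast enough then gives smooth $f_i$ on $\Omega_i$, converging uniformly on compacta to $f$, with concavity constants $\lambda_i\to\lambda$.
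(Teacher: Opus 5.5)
The paper does not prove this proposition; it quotes it from Greene--Wu. I have therefore judged your argument on its own terms.

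\textbf{The gluing step fails as written.} This is the step you call the main obstacle. The bound you aim for, $\nabla\rho_k\cdot\nabla\bigl((h_{k-1})_\varepsilon-h_{k-1}\bigr)\le o(1)$, is false in general. Put $\phi=(h_{k-1})_\varepsilon-h_{k-1}$. The cross term enters the Hessian as the form $w\mapsto 2\langle w,\nabla\rho_k\rangle\langle w,\nabla\phi\rangle$. Its maximum over unit $w$ is $|\nabla\rho_k||\nabla\phi|+\langle\nabla\rho_k,\nabla\phi\rangle\ge 0$. Near a kink of $h_{k-1}$, $\nabla\phi$ has the size of the Lipschitz constant, with opposite orientations on the two sides. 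Already for $f(x)=-|x|$ on $\bR$, $\phi'(\varepsilon/2)=f_\varepsilon'(\varepsilon/2)+1$ is a positive constant independent of $\varepsilon$. So wherever $\rho'>0$ the term $2\rho'\phi'$ is of order $\rho'$, not $o(1)$. If the glued function is still almost $\lambda$-concave, it is because the very negative term $\rho_k\nabla^2(h_{k-1})_\varepsilon$ absorbs this, and your plan has no such estimate. You also cannot arrange the cover so that the cut-offs' transition regions avoid the non-smooth set. The first cut-off has no earlier smooth set to sit in. Moreover, the kinks of a concave function can be dense, e.g.\ $f(x)=-\sum_j 2^{-j}|x-q_j|$ with $\{q_j\}$ dense. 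Spending a slack $\eta_k$ is not by itself a mechanism.

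\textbf{The remedy: no gluing is needed for this statement.} The statement only asks for $f_i$ on $\Omega_i$ and allows $\lambda_i\to\lambda$; this is exactly what the formulation buys over a single global approximant. The Riemannian convolution is one global formula, and your first two steps only use bounds that are uniform on compact sets. Take an exhaustion with $\bar\Omega_i$ compact and $\bar\Omega_i\subset\Omega_{i+1}$. Take a compact neighbourhood $K_i\subset\Omega$ of $\bar\Omega_i$ carrying:
\begin{itemize}
\item a Lipschitz constant $L_i$ for $f$;
\item curvature bounds;
\item a radius $\iota_i$ such that $\exp_x$ maps the $\iota_i$-ball of $T_xM$ diffeomorphically onto $B(x,\iota_i)\subset K_i$ for $x$ near $\bar\Omega_i$.
\end{itemize}
For $\varepsilon<\iota_i$ your estimates give $|f_\varepsilon-f|\le L_i\varepsilon$ and $\nabla^2 f_\varepsilon\le\lambda+C_i\varepsilon$ on $\Omega_i$. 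Choose $\varepsilon_i\to0$ with $(L_i+C_i)\varepsilon_i\to0$, and set $f_i=f_{\varepsilon_i}|_{\Omega_i}$ and $\lambda_i=\lambda+C_i\varepsilon_i$.

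\textbf{Two repairs are still needed in your first two steps.} First, smoothness. Differentiating under the integral sign in a trivialization of $TM$ differentiates $f$, which is only Lipschitz. Instead substitute $y=\exp_x v$:
\begin{equation*}
f_\varepsilon(x)=\int_M f(y)\,\varphi_\varepsilon\bigl(|\exp_x^{-1}y|\bigr)\,\theta_x(y)^{-1}\,d\mathrm{vol}(y),
\end{equation*}
where $\theta_x$ is the Jacobian of $\exp_x$. All $x$-dependence then sits in a kernel that is smooth for $\varepsilon<\iota_i$. Second, the acceleration of $t\mapsto\exp_{\gamma(t)}v(t)$ is of order $|R(v,\gamma')\gamma'|$, i.e.\ linear in $|v|$ (think of latitude circles on $S^2$), not $O(|v|^2)$. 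Your displayed bound, linear in $|v|$, is the correct one.
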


In fact, the proof in \cite{GWsub1973} implies the following version.
\begin{prop}[\cite{GWsub1973}]
	Let $(M^n,g)$ be a complete, n dimensional Riemannian manifold. Let $p\in M$,  $C_p$ the collection of cut points of p. Let $r(x)=|px|$ be the distance function. For any $R>0$, there exist $\eta_i, L_i \to 0$ and smooth, $1+L_i$-Lipschitz functions $f_{\eta_i}$, such that 
	\begin{equation}
		|f_{\eta_i}(x)-r(x)|<\eta_i \text{ for } x\in \overline{B(p,R)}
	\end{equation}
	and for $x\notin C_p$,
	\begin{equation}
		|\nabla f_{\eta_i}(x)| \to 1 \ as \  \eta_i \to 0.
	\end{equation}
\end{prop}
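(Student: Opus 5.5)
The plan is to construct the functions $f_{\eta}$ by the Riemannian convolution of Greene--Wu and to check the three properties directly from the formula. Fix $R>0$ and let $K=\overline{B(p,R+1)}$, which is compact by completeness. On $K$ the sectional curvature is bounded, $|sec|\ls \Lambda$, and the injectivity radius has a lower bound $i_0>0$. Choose a smooth radial mollifier $\varphi:\bR^n\to[0,\infty)$ supported in the unit ball with $\int\varphi=1$, and set $\varphi_\eta(v)=\eta^{-n}\varphi(v/\eta)$. For $\eta<\min(i_0,1)$ and $x\in B(p,R+\frac12)$, define
\begin{equation*}
f_\eta(x)=\int_{T_xM} r(\exp_x v)\,\varphi_\eta(|v|)\,dv,
\end{equation*}
where $dv$ is the Lebesgue measure on $T_xM$ induced by $g_x$. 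Greene--Wu show this is smooth in $x$. To see this, trivialize $TM$ locally by parallel transport and write the integral in normal coordinates; the $x$-dependence then moves into the smooth kernel. If a globally defined function is needed, one can patch with a cutoff outside $B(p,R)$, which does not affect the estimates on $\overline{B(p,R)}$.

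\textbf{Uniform closeness.} Since $r$ is $1$-Lipschitz and $|(\exp_x v)\,x|\ls |v|\ls\eta$ on the support, we get $|f_\eta(x)-r(x)|\ls \int |r(\exp_xv)-r(x)|\varphi_\eta\ls \eta$. So we may take $\eta_i$ to be any sequence decreasing to $0$.

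\textbf{Lipschitz constant.} Let $x,y$ be close and let $P:T_xM\to T_yM$ be parallel transport along the minimal geodesic from $x$ to $y$. Since $P$ is an isometry, $f_\eta(y)=\int_{T_xM} r(\exp_y Pv)\varphi_\eta(|v|)\,dv$. By Jacobi field comparison with $|sec|\ls\Lambda$ on $K$, for $|v|\ls\eta$ we have
\begin{equation*}
|\exp_x v,\ \exp_y Pv|\ls (1+C(\Lambda)\eta^2)\,|xy|.
\end{equation*}
Combining this with the $1$-Lipschitz property of $r$ gives $|f_\eta(x)-f_\eta(y)|\ls(1+C\eta^2)|xy|$. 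Hence $f_{\eta_i}$ is $(1+L_i)$-Lipschitz with $L_i=C\eta_i^2\to0$. I expect this step to be the main technical point. It requires uniform control, over $K$, of the differential of the map $(x,v)\mapsto \exp_x(v)$ along parallel fields, and that control must be made quantitative.

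\textbf{Gradient at non-cut points.} Let $x\notin C_p$ with $x\neq p$. Then $r$ is smooth on a neighborhood $U$ of $x$, with $|\nabla r|=1$ there. For $\eta$ small the support of the integrand lies in $U$, so we may differentiate under the integral. Fix a unit vector $w\in T_xM$ and let $W(v)$ be the Jacobi field along $t\mapsto\exp_x(tv)$ induced by the parallel variation. Then
\begin{equation*}
df_\eta(x)(w)=\int \langle\nabla r(\exp_xv),\,W(v)\rangle\,\varphi_\eta\,dv .
\end{equation*}
The comparison estimate above gives $|W(v)-\tau_v w|\ls C\eta^2$, where $\tau_v$ is parallel transport along the radial geodesic. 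Continuity of $\nabla r$ gives $\tau_v^{-1}\nabla r(\exp_xv)\to\nabla r(x)$ uniformly for $|v|\ls\eta$. Therefore $\nabla f_\eta(x)\to\nabla r(x)$, and so $|\nabla f_{\eta_i}(x)|\to1$.

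The point $x=p$ is excluded: there $r$ is not differentiable, and in fact $\nabla f_\eta(p)\to0$. So the gradient statement should be read for $x\notin C_p\cup\{p\}$, or with the convention that $p\in C_p$.
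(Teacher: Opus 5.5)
Your proposal is correct and is exactly the Greene--Wu Riemannian convolution that the paper invokes by citation alone; your Jacobi-field argument with $J'(0)=0$ along parallel variations correctly supplies the $(1+C\eta^2)$-Lipschitz bound and the pointwise gradient convergence that the paper leaves implicit. Your caveat at $x=p$ is also right: since $\langle J_v(1),\nabla r(\exp_p v)\rangle=\langle w,v\rangle/|v|$, symmetry gives $\nabla f_\eta(p)=0$, and this does no harm to the paper, which only uses the gradient convergence almost everywhere.
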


\section{dimension 2 and 3} 
\subsection{volume of level set}
Let $(M^n,g)$ be a complete, n dimensional Riemannian manifold. For $p\in M$, let $r(x)=|px|$ be the distance function. Let $S(0,r)\subset R^n$ be the sphere with radius r. 
The following property is a very easy to get by Bishop-Gromov volume comparison. Since we have not found references, we list a proof below.
\begin{prop}\label{prop:simpleBG}
	Let $(M^n,g)$ be a complete, n dimensional Riemannian manifold with Ricci curvature $\ge 0$. Denote by 
\begin{equation}
	v_s(r)=\frac{H^{n-1}(r^{-1}(t))}{H^{n-1}(S(0,r))},\ v=\frac{volB(p,1)}{volB(0,1)},
\end{equation}
then
	\begin{equation}
		1-v_s(r)\le \frac{1}{1-r^n}(1-v).
	\end{equation}
That is,
	\begin{equation}
		\cH^{n-1}(r^{-1}(t)) \ge vol(S^{n-1})t^n[1-\frac{1-v}{1-t^n}].
	\end{equation}

\end{prop}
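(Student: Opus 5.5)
The plan is to combine Bishop--Gromov monotonicity of the normalized sphere areas with the coarea formula on $B(p,1)$. I read the statement with the evident normalization: for $0<t<1$ set $A(t)=\cH^{n-1}(r^{-1}(t))$, write $\omega=\cH^{n-1}(S(0,1))$, and put $v_s(t)=A(t)/(\omega t^{n-1})$. The claim to prove is then
\[
1-v_s(t)\le \frac{1-v}{1-t^n},\qquad\text{equivalently}\qquad \cH^{n-1}(r^{-1}(t))\ge \omega\, t^{n-1}\Big[1-\frac{1-v}{1-t^n}\Big].
\]

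\textbf{Polar coordinates and monotonicity.} I work in geodesic polar coordinates at $p$. For $\theta\in S^{n-1}\subset T_pM$, let $c(\theta)\in(0,\infty]$ be the cut time, and let $J(s,\theta)$ be the volume density, so that $dvol=J(s,\theta)\,ds\,d\theta$ on $M\setminus C_p$. Define the regular part of the sphere area by
\[
A_{reg}(s)=\int_{\{\theta:\,c(\theta)>s\}} J(s,\theta)\,d\theta .
\]
Two facts hold. First, $A_{reg}(s)\le A(s)$, since the exponential map is injective on $\{s\}\times\{c>s\}$ and its image lies in $r^{-1}(s)$. Second, by Ricci $\ge 0$, $J(s,\theta)/s^{n-1}$ is nonincreasing in $s$, and the domain $\{c>s\}$ shrinks as $s$ grows. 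Hence $a(s):=A_{reg}(s)/(\omega s^{n-1})$ is nonincreasing, with $a(s)\le 1$ because $J(s,\theta)\le s^{n-1}$.

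\textbf{Coarea estimate.} Since $C_p$ has measure zero and $|\nabla r|=1$ off $C_p$,
\[
vol B(p,1)=\int_0^1 A_{reg}(s)\,ds .
\]
Fix $t\in(0,1)$ and split the integral at $t$. On $[0,t]$ I use $A_{reg}(s)\le \omega s^{n-1}$. On $[t,1]$ I use monotonicity, $A_{reg}(s)\le a(t)\,\omega s^{n-1}$. This gives
\[
vol B(p,1)\le \frac{\omega}{n}\big[t^n+a(t)(1-t^n)\big].
\]
Dividing by $vol B(0,1)=\omega/n$ yields $v\le t^n+a(t)(1-t^n)$, which rearranges to
\[
1-a(t)\le \frac{1-v}{1-t^n}.
\]

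\textbf{Conclusion and main obstacle.} Finally $v_s(t)=A(t)/(\omega t^{n-1})\ge a(t)$, so $1-v_s(t)\le 1-a(t)$, and the claimed inequality and its equivalent form follow. The only delicate point is the cut locus: the Bishop--Gromov monotonicity must be applied to the regular part $A_{reg}$, not to the full level-set measure $A$. The argument is arranged so that only the inequality $A\ge A_{reg}$ is needed, and that inequality points in the right direction.
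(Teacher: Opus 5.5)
Your proof is correct and follows essentially the same route as the paper. Both split $\mathrm{vol}\,B(p,1)$ at radius $t$, bound the inner ball by $\mathrm{vol}\,B(0,t)$ and the annulus by Bishop--Gromov monotonicity of the normalized sphere area, then compare with $v\,\mathrm{vol}\,B(0,1)$ and rearrange. Your use of the regular part $A_{reg}$ together with $A\ge A_{reg}$ is a small but welcome refinement: the paper applies monotonicity directly to $\cH^{n-1}(r^{-1}(t))$ without addressing the cut locus. Your normalization $t^{n-1}$ is the right one; the paper's displayed $t^n$ is a typo, and your inequality still implies that form.
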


\begin{proof}
By Bishop-Gromov volume comparison,
	\begin{equation}
		\cH^n(S(p,r))\int_r^1 t^{n-1} dt+volB(p,r)\ge v[volB(0,r)+\cH^n(S(0,r))\int_r^1 t^{n-1} dt],
	\end{equation}
	
	\begin{equation}
		\cH^n(S(p,r))\frac{1-r^n}{n}\ge v\cH^n(S(0,r))\frac{1-r^n}{n}-(1-v)volB(0,r)
	\end{equation}
	Then
	\begin{equation}
		\begin{array}{ll}
			v_s(r)&\ge v-\frac{n(1-v)}{1-r^n}\cdot \frac{volB(0,r)}{\cH^{n-1}(S(0,r))}\\
			&=v-\frac{r^n}{1-r^n}(1-v).
		\end{array}
	\end{equation}
	\begin{equation}
		\begin{array}{ll}
			1-v_s(r)&\le (1-v)+\frac{r^n}{1-r^n}(1-v)\\
			&=[1+\frac{r^n}{1-r^n}](1-v)\\
			&=\frac{1}{1-r^n}(1-v).
		\end{array}
	\end{equation}
\end{proof}

Let $p\in M$, let $r(x)=|px|$ be the distance function. Now consider $\overline{B(p,\frac34)}$, by proposition \ref{thm:smooth}, for any $\eta>0$, there exists a smooth function $f_{\eta}$, such that
\begin{equation}
	|f_{\eta}(x)-r(x)|<\eta \text{ for any } x\in \overline{B(p,\frac34)}.
\end{equation}
Let $t>\frac{1}{10}$, by Sard's theorem, $|\nabla f_{\eta} (t)|>0$ for a.e. t, and $f_{\eta}^{-1}(t)$ is a smooth n-1 dimensional manifold. Denote by 
\begin{equation}
	A(t):=\cH^{n-1}(r^{-1}(t)).
\end{equation}
\begin{equation}
	A_{\eta}(t):=\cH^{n-1}(f_{\eta}^{-1}(t)).
\end{equation}
The following proposition is known to expert, we give a proof for reader's convenience. 
\begin{prop}\label{prop:intAeta}
	\begin{equation}
		\lim_{\eta \to 0} \int_{t_1}^{t_2} A_{\eta}(t) dt \to \int_{t_1}^{t_2} A(t) dt.
	\end{equation}
\end{prop}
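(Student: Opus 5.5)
The idea is to convert both sides into volume integrals via the coarea formula and then pass to the limit using the uniform convergence $f_\eta \to r$ together with the gradient convergence $|\nabla f_\eta|\to 1$ off the cut locus. Since $r$ is $1$-Lipschitz with $|\nabla r|=1$ almost everywhere (the cut locus $C_p$ has measure zero), the coarea formula gives
\begin{equation*}
\int_{t_1}^{t_2} A(t)\,dt=\mathrm{vol}\bigl(\{t_1< r< t_2\}\bigr).
\end{equation*}
For the smooth function $f_\eta$, the coarea formula gives
\begin{equation*}
\int_{t_1}^{t_2} A_\eta(t)\,dt=\int_{\{t_1<f_\eta<t_2\}} |\nabla f_\eta|\,dvol .
\end{equation*}
Here Sard's theorem only serves to identify $A_\eta(t)$ with the area of a smooth hypersurface for a.e.\ $t$; the coarea identity itself holds regardless. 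It therefore suffices to show
\begin{equation*}
\int_{\{t_1<f_\eta<t_2\}} |\nabla f_\eta|\,dvol\;\longrightarrow\;\mathrm{vol}\bigl(\{t_1<r<t_2\}\bigr)\quad\text{as } \eta\to 0,
\end{equation*}
with $t_2\le \frac34$, or in any case with the region contained in $\overline{B(p,\frac34)}$, where the approximation holds.

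\textbf{Step 1: localizing the sets.} Since $|f_\eta-r|<\eta$ on $\overline{B(p,\frac34)}$, we have the inclusions
\begin{equation*}
\{t_1+\eta<r<t_2-\eta\}\subset\{t_1<f_\eta<t_2\}\cap \overline{B(p,\tfrac34)}\subset\{t_1-\eta<r<t_2+\eta\}.
\end{equation*}
Hence the symmetric difference between $\{t_1<f_\eta<t_2\}$ and $\{t_1<r<t_2\}$ is contained in $\{|r-t_1|\le\eta\}\cup\{|r-t_2|\le \eta\}$. The volume of this set tends to $0$, because it equals $\int_{t_i-\eta}^{t_i+\eta}A(s)\,ds$ and $A$ is locally bounded by the Bishop--Gromov comparison in the form used in Proposition~\ref{prop:simpleBG}.

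\textbf{Step 2: the integrand.} The functions $f_\eta$ are $(1+L_\eta)$-Lipschitz with $L_\eta\to 0$, so $|\nabla f_\eta|\le 1+L_\eta$ uniformly. Also $|\nabla f_\eta|\to 1$ pointwise off $C_p$, which has measure zero. On the fixed bounded set $\overline{B(p,\frac34)}$, dominated convergence then gives
\begin{equation*}
\int_{\{t_1<r<t_2\}}\bigl||\nabla f_\eta|-1\bigr|\,dvol\to 0 .
\end{equation*}
On the symmetric difference the integrand is bounded by $2$, so that contribution vanishes by Step 1. Combining the two steps yields the proposition.

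\textbf{Main obstacle.} The delicate point is the pointwise convergence $|\nabla f_\eta|\to 1$. It is taken directly from the Green--Wu-type proposition stated above, so it need not be reproved, but the statement requires a subsequence $\eta_i\to 0$. I would therefore state the limit along that sequence, or note that every sequence $\eta\to0$ admits such a subsequence, which gives the full limit. I would also check that $t_2$ (enlarged by $\eta$) stays inside the radius on which the approximation is valid, shrinking the working ball if necessary.
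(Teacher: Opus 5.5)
Your proposal is correct and follows essentially the same route as the paper: the coarea formula for $f_\eta$ and for $r$, then dominated convergence using $|\nabla f_\eta|\to 1$ off the null set $C_p$ together with the uniform bound $|\nabla f_\eta|\le 1+L_\eta$. The only difference is that you control the symmetric difference of $\{t_1<f_\eta<t_2\}$ and $\{t_1<r<t_2\}$ through the volume of $\{|r-t_i|\le\eta\}$. The paper instead asserts pointwise convergence of the characteristic functions for every $x$, which can fail on the null set $\{r=t_1\}\cup\{r=t_2\}$, so your version is, if anything, slightly more careful.
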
\label{propAetaA}
\begin{proof}
	By coarea formula, we have 
	\begin{equation}
		\int_{f_{\eta}^{-1}(t_1,t_2)} |\nabla f_{\eta}| \ dvol =\int_{t_1}^{t_2} A_{\eta}(t) dt.
	\end{equation}
	\begin{equation}
		\begin{array}{ll}
			\int_{f_{\eta}^{-1}(t_1,t_2)} |\nabla f_{\eta}| \ dvol-vol(r^{-1}(t_1,t_2))&=\int_{B(p,1)} \chi_{f_{\eta}^{-1}(t_1,t_2)}(x)\cdot |\nabla f_{\eta}(x)|-\chi_{r^{-1}(t_1,t_2)}(x) dvol\\
			&=\int_{B(p,1)\backslash C_p} \chi_{f_{\eta}^{-1}(t_1,t_2)}(x)\cdot |\nabla f_{\eta}(x)|-\chi_{r^{-1}(t_1,t_2)}(x) \ dvol\\
		\end{array}
	\end{equation}
	For any x, 
	\begin{equation}
		\chi_{f_{\eta}^{-1}(t_1,t_2)}(x)\to \chi_{r^{-1}(t_1,t_2)}(x).
	\end{equation}
	For any $x\notin C_p$, 
	\begin{equation}
		|\nabla f_{\eta}(x)| \to |\nabla r(x)|=1. 
	\end{equation}
	Then 
	\begin{equation}
		\int_{f_{\eta}^{-1}(t_1,t_2)} |\nabla f_{\eta}| dvol \to vol(r^{-1}(t_1,t_2)) \text{ as } \eta \to 0.
	\end{equation}
	
	Note that
	\begin{equation}
		vol(r^{-1}(t_1,t_2))=\int_{t_1}^{t_2} A(t) dt. 
	\end{equation}
\end{proof}

The following proposition is a slight modification of \cite{Pet2009upper}.
\begin{prop}\label{AetaHeta}
	Let $(M^n,g)$ be a complete, n dimensional Riemannian manifold with non-negative sectional curvature, suppose that $B(p,2)\cap \partial M=\emptyset$. If 
	\begin{equation}
		v:=\frac{volB(p,1)}{volB(0,1)}
	\end{equation}
	is close to 1, then for $t_1,t_2$ with $10\eta< t_1<t_2<\frac{4}{5}$, we have
	\begin{equation}
		\lim_{\eta \to 0} (A_{\eta}(t_2)-A_{\eta}(t_1)) \le \lim_{\eta \to 0} \int_{t_1}^{t_2} \int_{f_{\eta}^{-1}(t)} H_{\eta} dH^{n-1},
	\end{equation}
	where $H_{\eta}$ is the mean curvature of $f_{\eta}^{-1}(t)$.
\end{prop}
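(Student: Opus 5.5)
The plan is to obtain the inequality from the first variation of area along the flow of $\nabla f_\eta/|\nabla f_\eta|^2$, and then to push the error terms to zero as $\eta\to 0$, exactly as in \cite{Pet2009upper}. For a regular value $t$ of $f_\eta$, the level sets $f_\eta^{-1}(s)$ for $s$ near $t$ are obtained from $f_\eta^{-1}(t)$ by flowing along $\nu/|\nabla f_\eta|$, where $\nu=\nabla f_\eta/|\nabla f_\eta|$. The first variation formula then gives
\begin{equation}
\frac{d}{dt}A_\eta(t)=\int_{f_\eta^{-1}(t)} \frac{H_\eta}{|\nabla f_\eta|}\, dH^{n-1},
\end{equation}
with $H_\eta$ the mean curvature with respect to $\nu$. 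The first step is to justify integrating this identity from $t_1$ to $t_2$. By Sard's theorem the critical values form a null set. The point to check is that $A_\eta$ has no downward jumps: since $f_\eta$ is close to $r$ and the level sets stay inside $B(p,\frac45)$ for $10\eta<t<\frac45$, one can work with $A_\eta$ as a BV function. Alternatively one can integrate the divergence identity $\operatorname{div}(\nu)=H_\eta$ over $f_\eta^{-1}(t_1,t_2)$, which gives
\begin{equation}
A_\eta(t_2)-A_\eta(t_1)=\int_{f_\eta^{-1}(t_1,t_2)} H_\eta\, dvol=\int_{t_1}^{t_2}\int_{f_\eta^{-1}(t)}\frac{H_\eta}{|\nabla f_\eta|}\,dH^{n-1}dt
\end{equation}
by the coarea formula. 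I prefer this second route, since it avoids regularity issues in $t$; near critical points one cuts off and lets the cutoff shrink.

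The second step is to replace $1/|\nabla f_\eta|$ by $1$. The Green--Wu smoothing of $r$, which is concave-type away from $p$ under sectional curvature $\ge 0$ (semiconcave with bound roughly $1/r$), gives $|\nabla f_\eta|\le 1+L_\eta$ and $|\nabla f_\eta|\to 1$ off $C_p$. Write
\begin{equation}
\frac{H_\eta}{|\nabla f_\eta|}=H_\eta+H_\eta\Bigl(\frac{1}{|\nabla f_\eta|}-1\Bigr).
\end{equation}
The error term must be shown to vanish in the limit. Two ingredients are used here. The first is that $f_\eta$ is $\lambda_\eta$-concave on $\{f_\eta>10\eta\}$ with $\lambda_\eta\approx 1/t$. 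This gives an upper bound on the Hessian, so $H_\eta\,|\nabla f_\eta|\le (n-1)/t+o(1)$, and the positive part of $H_\eta$ is uniformly bounded on the region. The second is that the distribution of the Hessian's negative part is controlled by the total variation of the gradient, which is bounded via the semiconcavity bound and the divergence theorem on a slightly larger ball. Because the inequality is only one-sided, it is enough to check that the error is nonpositive up to $o(1)$ where $H_\eta<0$, and is controlled by the uniform upper bound times $\int(|\nabla f_\eta|^{-1}-1)_+$ where $H_\eta>0$.

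The main obstacle is controlling the set where $|\nabla f_\eta|$ is small. I would handle it using the hypothesis that $v$ is close to $1$. Almost maximal volume forces $C_p\cap B(p,1)$, and the region where $|\nabla f_\eta|$ stays bounded away from $1$, to have small measure: by Proposition \ref{prop:simpleBG} and Proposition \ref{prop:intAeta}, $\int_{t_1}^{t_2}A_\eta$ approaches $vol\, r^{-1}(t_1,t_2)$, which forces $\int |\nabla f_\eta|\to vol$. Together with $|\nabla f_\eta|\le 1+L_\eta$, this shows that $|\nabla f_\eta|\to 1$ in $L^1$ on $f_\eta^{-1}(t_1,t_2)$. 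Since $|\nabla f_\eta|$ could be near zero on tiny sets, I would truncate: on $\{|\nabla f_\eta|<\frac12\}$, which has measure tending to zero, use the sign bound $H_\eta\le C/t$, discarding the negative contributions. On the complement, $|\nabla f_\eta|^{-1}-1$ is bounded by $2\,\bigl|1-|\nabla f_\eta|\bigr|$, so its integral tends to $0$. Taking $\lim_{\eta\to0}$ then yields the stated inequality. If the negative part of $H_\eta$ on the small set turns out to be needed rather than discardable, I would instead bound $\int |H_\eta|$ uniformly via the total variation of $\nabla f_\eta$.
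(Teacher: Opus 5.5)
There is a genuine gap. Your skeleton matches the paper's:
\begin{itemize}
\item the divergence identity $A_\eta(t_2)-A_\eta(t_1)=\int_{f_\eta^{-1}(t_1,t_2)}H_\eta\,dvol$;
\item the error term $\int_{f_\eta^{-1}(t_1,t_2)}H_\eta(1-|\nabla f_\eta|)\,dvol$;
\item dominated convergence where $H_\eta>0$;
\item the bound $|\nabla f_\eta|\le 1+L_\eta$ where $H_\eta<0$.
\end{itemize}
What is missing is a uniform \emph{pointwise} lower bound on $|\nabla f_\eta|$ on $f_\eta^{-1}(t_1,t_2)$, and the positive part cannot be controlled without one.

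Semiconcavity gives only $H_\eta|\nabla f_\eta|\le\frac{n-1}{t}+o(1)$. So your assertion that $(H_\eta)_+$ is uniformly bounded does not follow. The bound $H_\eta\le C/t$ that you invoke on $\{|\nabla f_\eta|<\frac12\}$ is unavailable precisely there: the only bound is $(\frac{n-1}{t}+o(1))/|\nabla f_\eta|$, which blows up as $|\nabla f_\eta|\to 0$. On that set $1-|\nabla f_\eta|>\frac12$, so the error integrand is at least $\frac12(H_\eta)_+$. Knowing only that the set has small measure says nothing about $\int(H_\eta)_+$ over it.

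Without the lower bound, $f_\eta$ may also have critical points in the region. Then the level sets need not be smooth, and your cutoff justification of the divergence identity is not actually carried out.

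The part where $H_\eta<0$ is fine. There the error is positive only where $|\nabla f_\eta|>1$, where $(H_\eta)_-\le (n-1)|\mathrm{Hess}\,f_\eta|$ and your total-variation bound applies. The paper instead bounds $\int(H_\eta)_-$ through the divergence identity, which again needs $(H_\eta)_+$ bounded.

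Note also that the $L^1$ convergence $|\nabla f_\eta|\to 1$ you attribute to $v\approx 1$ holds on any complete manifold. It is just the a.e. convergence off $C_p$ behind Proposition \ref{prop:intAeta}. So your argument never really uses the hypothesis on $v$, which is a warning sign.

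The missing idea is the paper's first step, and it is exactly where $v\approx1$ enters.
\begin{enumerate}
\item Let $\delta=1-v$. The map $\xi\mapsto\exp_p(\frac{9}{10}\xi)$ from $\{\uparrow_p^x: x\in\partial B(p,\frac{9}{10})\}\subset\Sigma_p=S^{n-1}(1)$ onto $\partial B(p,\frac{9}{10})$ is $\frac{9}{10}$-Lipschitz by Toponogov.
\item By Proposition \ref{prop:simpleBG}, these directions fill all but a $c(n)\delta$ fraction of $\Sigma_p$, and hence are $\kappa(\delta)$-dense.
\item So every $x\in B(p,\frac45)$ has some $y\in\partial B(p,\frac{9}{10})$ with $\angle xpy<\kappa(\delta)$. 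Since $|py|-|px|\ge\frac{1}{10}$, the comparison triangle is thin, so $\angle pxy>\pi-\kappa(\delta)$ for every segment $xp$. That is, $dist_p$ has gradient $\ge 1-\kappa(\delta)$ throughout $B(p,\frac45)$.
\item Combine this with $|f_\eta-r|<\eta$ and the semiconcavity of $f_\eta$ along $xy$: compare a difference quotient over a short step with the derivative at $x$. This yields $|\nabla f_\eta|\ge\frac{9}{10}$ on the region for small $\eta$. It is the source of the factor $\frac{10}{9}$ in the paper's bound $H_\eta\le\frac{10}{9}(\frac{n-1}{t}+\epsilon)$.
\end{enumerate}
With this bound the level sets are smooth and no cutoff is needed. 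Also $(H_\eta)_+\le\frac{10}{9}(\frac{n-1}{t_1}+\epsilon)$ uniformly, so dominated convergence kills the positive part, and $\int(H_\eta)_-$ is bounded. The rest of your argument then goes through.
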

\begin{proof}
	Let $\delta=1-v$, then $\delta$ is a small positive number.
	Consider the subset 
	\begin{equation}
		\Sigma:=\{\uparrow_p^x| x\in \partial B(p,\frac{9}{10})\}\subset \Sigma_p=S^{n-1}(1).
	\end{equation}
	Since for $\xi_1,\xi_2 \in \Sigma$, 
	\begin{equation}
		|\exp_p(\frac{9}{10}\xi_1)\exp_p(\frac34\xi_2)|\le \frac{9}{10}|\xi_1\xi_2|,
	\end{equation}
	we have 
	\begin{equation}
		\begin{array}{ll}
			vol(\Sigma)&\ge (\frac{10}{9})^{n-1}vol(\partial B(p,\frac{9}{10}))\\
			&\ge (\frac{10}{9})^{n-1}vol(\partial B(0,\frac{9}{10}))[1-c(n)\delta]\\
			&=[1-c_1(n)\delta]vol(\partial B(0,1))\\
			&=[1-c_1(n)\delta]vol(\Sigma_p).
		\end{array}
	\end{equation}
	Then for any $\xi \in \Sigma_p$, there exists $\xi_1 \in \Sigma$, such that $|\xi \xi_1|<c_2(n)\kappa(\delta)$. Then for any $x\in B(p,\frac45)$, there exists $y\in \partial B(p,\frac{9}{10})$, such that $\angle x py<c_2(n)\kappa(\delta)$, then $\angle pxy >\pi-c_3(n)\kappa(\delta)$. Hence 
	\begin{equation}
		\nabla_x dist_p \ge 1-c_4(n)\kappa(\delta).
	\end{equation}
	
	Then $f_{\eta}^{-1}(t)$ is a smooth n-1 dimensional manifold for $2\eta<t<\frac12$. Denote by $H_{\eta}$ the mean curvature of $f_{\eta}^{-1}(t)$, then by Hessian comparison, we have 
	\begin{equation}
		H_{\eta}(t)=\frac{1}{|\nabla f_{\eta}|}\sum_{i=1}^{n-1} Hess f_{\eta}(e_i,e_i) \le \frac{10}{9} (\frac{n-1}{t}+\epsilon).
	\end{equation}
	
	For a function $g$, denote by $g_+(x)=\max\{g(x),0\}$ and $g_-(x)=\max\{0,-g(x)\}$. Then
	\begin{equation}
		g=g_+-g_-.
	\end{equation}

	Since
	\begin{equation}
		(H_{\eta})_+\le \frac{10}{9}(\frac{n-1}{t_1}+\epsilon).
	\end{equation}
	Since 
	\begin{equation}
		A_{\eta}'(t)=\int_{f^{-1}(t)} \frac{H_{\eta}}{|\nabla f_{\eta}|} \ dvol,
	\end{equation}
	by coarea formula,
	\begin{equation}
		A_{\eta}(t_2)-A_{\eta}(t_1)=\int_{f_{\eta}^{-1}(t_1,t_2)} H_{\eta} \ dvol.
	\end{equation}
	Then 
	\begin{equation}\label{Hess-}
		\int_{f_{\eta}^{-1}(t_1,t_2)}( H_{\eta})_-\le A_{\eta}(r^{-1}(t_1))-A_{\eta}(r^{-1}(t_2))+(\frac{1}{t_1}+\epsilon)vol(r^{-1}(t_1,t_2)).
	\end{equation}
	
	Then
	\begin{equation}\label{dim2claim}
		\begin{array}{ll}
			[A_{\eta}(t_2)-A_{\eta}(t_1)]-\int_{t_1}^{t_2} \int_{f_{\eta}^{-1}(t)} H_{\eta}&=\int_{f_{\eta}^{-1}(t_1,t_2)}(1-|\nabla f_{\eta}|) H_{\eta}\\
			&=\int_{f_{\eta}^{-1}(t_1,t_2)}(1-|\nabla f_{\eta}|) (H_{\eta})_+ dvol -\int_{f^{-1}(t_1,t_2)}(1-|\nabla f_{\eta}|) (H_{\eta})_-\\
		\end{array}
	\end{equation}
	For $f(x)\ge t_1$, 
	\begin{equation}
		|(1-|\nabla f_{\eta}|) (H_{\eta})_+| \le  (H_{\eta})_+ \le \frac{10}{9}(\frac{n-1}{t_1}+\epsilon).
	\end{equation}
	For $x\notin C_p$, 
	\begin{equation}\label{nablato0}
		\frac{1}{|\nabla f_{\eta}(x)|}-1 \to 0 \text{ as } \eta \to 0
	\end{equation}
	Since $H^2(C_p)=0$, by dominated convergence theorem,
	\begin{equation}\label{Hess+to0}
		\int_{f_{\eta}^{-1}(t_1,t_2)}(\frac{1}{|\nabla f_{\eta}|}-1) (H_{\eta})_+\ dvol \to 0.
	\end{equation}
	Since $f_{\eta}$ is $1+L_i$-Lipschitz for $L_i \to 0$,
	
	\begin{equation}\label{Hess-to0}
		\lim_{\eta \to 0}\int_{f_{\eta}^{-1}(t_1,t_2)}(\frac{1}{|\nabla f_{\eta}|}-1) (H_{\eta})_- \ge  0.
	\end{equation}
	Then 
	\begin{equation}
		\lim_{\eta \to 0} \int_{f_{\eta}^{-1}(t_1,t_2)} \frac{(H_{\eta})_-}{|\nabla f_{\eta}|} \ dvol 
		\ge \lim_{\eta \to 0} \int_{f_{\eta}^{-1}(t_1,t_2)} (H_{\eta})_- \ dvol.
	\end{equation}
	Then 
	\begin{equation}
		\lim_{\eta \to 0} \int_{f_{\eta}^{-1}(t_1,t_2)} \frac{H_{\eta}}{|\nabla f_{\eta}|} dvol \le \lim_{\eta \to 0} \int_{f_{\eta}^{-1}(t_1,t_2)} H_{\eta} \ dvol.
	\end{equation}
\end{proof}

\subsection{Proof of dimension 2}
Let $(M^n,g)$ be an n-dim Riemannian manifold. For $p\in M$, denote by the volume ratio of $B(p,1)$ as
\begin{equation}
	v:=\frac{volB(p,1)}{volB(0,1)}.
\end{equation}
\begin{thm}\label{thm:dim2}
	There exists constant $C>0$, such that for any 2 dimensional Riemannian manifold $(M^2,g)$ with non-negative sectional curvature. We have
	\begin{equation}
		\int_{B(p,\frac12)} Scal \le C(n)(1-v).
	\end{equation}
\end{thm}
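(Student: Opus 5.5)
In dimension 2 we have $Scal=2K$, with $K\ge 0$ the Gauss curvature. So it suffices to bound $\int_{B(p,\frac12)}K$ by $C(1-v)$. If $v\le 1-\epsilon_0$ for a fixed small $\epsilon_0$, the bound follows from Petrunin's estimate \eqref{ls:PetSc}: $\int_{B(p,1)}Scal\le C$, which is at most $\frac{C}{\epsilon_0}(1-v)$. Hence we may assume that $\delta=1-v$ is small. Then Proposition \ref{AetaHeta} applies.

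The plan is to use Gauss--Bonnet on the smoothed level curves $f_\eta^{-1}(t)$, which is the dimension-2 form of the Bochner/first-variation idea. For a.e. $t\in(t_1,t_2)\subset(\frac1{10},\frac45)$, the set $\{f_\eta\le t\}$ is a smooth compact domain containing $p$. Since $|\nabla f_\eta|$ is close to 1 away from $p$ (we showed $\nabla dist_p\ge 1-c\kappa(\delta)$ on $B(p,\frac45)$), this domain is a topological disk. Gauss--Bonnet then gives
\begin{equation*}
\int_{f_\eta^{-1}(t)}H_\eta \, d\mathcal H^1 = 2\pi-\int_{\{f_\eta\le t\}}K\, dvol \le 2\pi-\int_{B(p,\frac12)}K\,dvol
\end{equation*}
for $t$ close to $\frac12$ from above, up to an error $o(1)$ as $\eta\to0$. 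Here $H_\eta$ is the geodesic curvature.

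Next we integrate this in $t$ over $[t_1,t_2]$ with $\frac12<t_1<t_2<\frac45$, say $t_1=0.6$ and $t_2=0.7$, and use Proposition \ref{AetaHeta}:
\begin{equation*}
\lim_{\eta\to0}(A_\eta(t_2)-A_\eta(t_1))\le (t_2-t_1)\Big(2\pi-\int_{B(p,\frac12)}K\Big).
\end{equation*}
Since $A_\eta$ enters only through $t$-averages, we first average this inequality in $t_1$ and $t_2$ over small intervals. We then pass to the limit using Proposition \ref{prop:intAeta}, which replaces $A_\eta$ by $A(t)=\mathcal H^1(r^{-1}(t))$.

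The remaining step is to compare the averages of $A$ near $t_2$ and near $t_1$ with the Euclidean values. Two bounds are needed:

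1. A lower bound at $t_2$. Proposition \ref{prop:simpleBG} gives $A(t)\ge 2\pi t(1-\frac{\delta}{1-t^2})$, so $A(t_2)\ge 2\pi t_2-C\delta$.
2. An upper bound at $t_1$. Bishop--Gromov comparison for $K\ge0$ gives $A(t)\le 2\pi t$.

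Combining these yields
\begin{equation*}
2\pi(t_2-t_1)-C\delta\le (t_2-t_1)\Big(2\pi-\int_{B(p,\frac12)}K\Big),
\end{equation*}
hence $\int_{B(p,\frac12)}K\le \frac{C}{t_2-t_1}\delta$. This gives the theorem with $Scal=2K$.

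The main obstacle is the Gauss--Bonnet step. We must justify that the sublevel sets $\{f_\eta\le t\}$ are disks containing $B(p,\frac12)$ for $t>\frac12+\eta$. We must also control the error between $\int_{\{f_\eta\le t\}}K$ and $\int_{B(p,\frac12)}K$, which has the correct sign because $K\ge0$. Finally, we must make sure that the limits in $\eta$ commute with the averaging in $t$. To handle this, I would use the near-regularity of $dist_p$ on the annulus together with uniform convergence $f_\eta\to r$ to show that the level sets are isotopic to distance circles.
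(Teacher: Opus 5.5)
Your proposal is correct and is essentially the paper's proof. Both arguments use Gauss--Bonnet on the smoothed level curves $f_\eta^{-1}(t)$ and Proposition \ref{AetaHeta} to turn total geodesic curvature into growth of $A_\eta$. Both then average in $t$ via Proposition \ref{prop:intAeta}, and apply the bound $A(t)\le 2\pi t$ at the inner level and Proposition \ref{prop:simpleBG} at the outer level. The differences are cosmetic. The paper selects a single level $t_\eta$ with $\int_{f_\eta^{-1}(t_\eta)}k_g\ge 2\pi-C(1-v)$ by a mean-value argument rather than integrating Gauss--Bonnet over all levels. It also uses only the inequality $\chi(\{f_\eta<t\})\le 1$ rather than equality, so your disk claim is stronger than what is needed.
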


\begin{proof}
	It sufficient to prove this theorem for the case that $v$ is close to 1. Consider $\overline{B(p,\frac34)}$, by proposition \ref{thm:smooth}, for any $\eta>0$, there exists a smooth function $f_{\eta}$, such that
	\begin{equation}
		|f_{\eta}(x)-r(x)|<\eta \text{ for any } x\in \overline{B(p,\frac34)}.
	\end{equation}
	Since for $2\eta<t<\frac45$, $f_{\eta}^{-1}(t)$ is a smooth 1-dim manifold,
	By Gauss-Bonnet theorem,
	\begin{equation}
		\int_{\{f_{\eta}<t\}\}} K+\int_{f^{-1}(t)}k_g=2\pi \chi(f^{-1}(t)),
	\end{equation}
	Where $K$ is the Gaussian curvature, $k_g$ is the geodesic curvature, and $\chi$ is the Euler Characteristic. Since $K\ge 0$, then 
	\begin{equation}
		\chi(\{f_{\eta}^{-1}(t)\})\le 1.
	\end{equation}
	Let $n=\frac{\nabla f_{\eta}}{|\nabla f_{\eta}|}$ be the out normal vector, then
	\begin{equation}
		\begin{array}{ll}
			k_g&=-g(n,\nabla_{\gamma'}\gamma')\\
			&=\frac{1}{|\nabla f_{\eta}|} Hess f_{\eta}(\gamma',\gamma').
		\end{array}
	\end{equation}
	
	We have
	\begin{equation}\label{2GBle2pi}
		\int_{\{f_{\eta}<t\}} Scal+\int_{f_{\eta}^{-1}(t)} \frac{1}{|\nabla f_{\eta}|} Hess f_{\eta}(\gamma',\gamma')\le 2\pi.
	\end{equation}
	To estimate $\int_{\{f_{\eta}<t\}}Scal $, we just need to estimate $\int_{f_{\eta}^{-1}(t)} \frac{1}{|\nabla f_{\eta}|} Hess f_{\eta}(\gamma',\gamma')$.

	By Property \ref{AetaHeta}, 
	\begin{equation}
		\lim_{\eta \to 0} \int_{t_1}^{t_2} \int_{f_{\eta}^{-1}(t)} \frac{Hess f_{\eta}(\gamma',\gamma')}{|\nabla f_{\eta}|}
		\ge \lim_{\eta \to 0} (A_{\eta}(t_2)-A_{\eta}(t_1)).
	\end{equation}
	By Property \ref{propAetaA}, for any sufficiently small $\epsilon$ with $\epsilon<<(1-v)$, we can choose sufficiently small $\eta$ and there exists $t_2'\in [t_2,t_2+\epsilon]$, such that 
	\begin{equation}\label{2get2'}
		\begin{array}{ll}
			A_{\eta}(t_2')&\ge \frac{1}{\epsilon} \int_{t_2}^{t_2+\epsilon} A_{\eta}(t)dt\\
			&\ge \frac{1}{\epsilon} \int_{t_2}^{t_2+\epsilon} A(t)dt-10(1-v).
		\end{array}
	\end{equation}
	Since 
	\begin{equation}\label{2geAt}
		\begin{array}{ll}
			A(t)&\ge 2\pi t[1-\frac{1}{1-t^2}(1-v)]\\
			&=2\pi t-\frac{2\pi t^2}{1-t^2}(1-v),
		\end{array}
	\end{equation}
	Then if $t_2<\frac45$, by (\ref{2get2'}) and (\ref{2geAt}), we have
	\begin{equation}\label{2ge2001-v}
		\begin{array}{ll}
			A_{\eta}(t_2')&\ge \frac{2\pi}{\epsilon} \int_{t_2}^{t_2+\epsilon} t dt-200(1-v)\\
			&=2\pi t_2+\pi \epsilon-200(1-v).
		\end{array}
	\end{equation}
	On the other hand,
	for any sufficiently small $\epsilon$ with $\epsilon<<(1-v)$, we can choose sufficiently small $\eta$ and there exists $t_1'\in [t_2,t_2+\epsilon]$, such that 
	\begin{equation}\label{2le101-v}
		\begin{array}{ll}
			A_{\eta}(t_1')&\le \frac{1}{\epsilon} \int_{t_1}^{t_1+\epsilon} A_{\eta}(t)dt\\
			&\le \frac{1}{\epsilon} \int_{t_1}^{t_1+\epsilon} A(t)dt+10(1-v)\\
			&\le  \frac{1}{\epsilon} \int_{t_1}^{t_1+\epsilon} 2\pi t dt+10(1-v)\\
			&=2\pi t_1+\pi \epsilon +10(1-v).
		\end{array}
	\end{equation}
	Then by \eqref{2ge2001-v} and \eqref{2le101-v},
	\begin{equation}\label{2gq3001-v}
		A_{\eta}(t_2')-A_{\eta}(t_1')\ge 2\pi(t_2-t_1)-300(1-v).
	\end{equation}
	By \eqref{2gq3001-v} and \eqref{AetaHeta}, for sufficiently small $\eta$, we have 
	\begin{equation}
		\begin{array}{ll}
			\int_{t'_1}^{t'_2} \int_{f_{\eta}^{-1}(t)} \frac{Hess f_{\eta}(\gamma',\gamma')}{|\nabla f_{\eta}|} &\ge A_{\eta}(t_2')-A_{\eta}(t_1')-10(1-v)\\
			&\ge 2\pi(t_2-t_1)-400(1-v).
		\end{array}
	\end{equation}
	
	Then there exists $t_{\eta}\in [t'_1,t'_2]$ such that
	\begin{equation}\label{2teta4001-v}
		\begin{array}{ll}
			\int_{f_{\eta}^{-1}(t_{\eta})} \frac{Hess f_{\eta}(\gamma',\gamma')}{|\nabla f_{\eta}|}&\ge \frac{1}{t'_2-t'_1}\int_{t_1}^{t_2} \int_{f_{\eta}^{-1}(t)} \frac{Hess f_{\eta}(\gamma',\gamma')}{|\nabla f_{\eta}|}\\
			&\ge 2\pi\frac{t_2-t_1}{t_2'-t_1'}-400\frac{1-v}{t_2'-t_1'}.
		\end{array}
	\end{equation}
	Since 
	\begin{equation}\label{2t2'-t1'}
		t_2'-t_1'\le t_2-t_1+2\epsilon \text{ and } \epsilon<< 1-v,
	\end{equation}
	by \eqref{2t2'-t1'} and \eqref{2teta4001-v}, if $t_2-t_1>\frac{1}{10}$, we have
	\begin{equation}\label{2100001-v}
		\int_{f_{\eta}^{-1}(t_{\eta})} \frac{Hess f_{\eta}(\gamma',\gamma')}{|\nabla f_{\eta}|}\ge 2\pi-10000(1-v).
	\end{equation}
	Since we can choose $t_1,t_2$ such that $B(p,\frac12)\subset \{f_{\eta}<t_{\eta}\}$, by \eqref{2100001-v} and \ref{2GBle2pi}, we have
	\begin{equation}
		\begin{array}{ll}
			\int_{B(p,\frac12)} Scal &\le \int_{\{f_{\eta}<t_{\eta}\}} Scal \\
			&\le 10000(1-v).
		\end{array}
	\end{equation}

\end{proof}
\subsection{Proof of dimension 3}

Let $(M^n,g)$ be an n-dim Riemannian manifold. For $p\in M$, denote by the volume ratio of $B(p,1)$ as
\begin{equation}
	v:=\frac{volB(p,1)}{volB(0,1)}.
\end{equation}
\begin{thm}\label{thm:dim3}
	There exists constant $C>0$, such that for any 3 dimension Riemannian manifold $(M^3,g)$ with non-negative sectional curvature. We have
	\begin{equation}\label{Scal3}
		\int_{B(p,\frac12)} Scal \le C(n)(1-v).
	\end{equation}
\end{thm}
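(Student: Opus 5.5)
The plan is to run the argument of Theorem \ref{thm:dim2} one dimension up. We apply Gauss--Bonnet on the level surfaces $\Sigma_t:=f_\eta^{-1}(t)$ of the smoothed distance function $f_\eta$, and use the Riccati (Bochner) equation for the total mean curvature $\int_{\Sigma_t}H_\eta$ in place of the total geodesic curvature. As before, it suffices to treat the case where $v$ is close to $1$, since otherwise \eqref{Scal3} follows from \eqref{ls:PetSc}. We work on $\overline{B(p,\frac34)}$ with $f_\eta$ from Proposition \ref{thm:smooth}. For a.e. $t\in(2\eta,\frac45)$, $\Sigma_t$ is a smooth closed surface. The $\eta\to0$ error terms coming from $|\nabla f_\eta|\neq 1$ are handled exactly as in Propositions \ref{prop:intAeta} and \ref{AetaHeta} (dominated convergence off the cut locus, together with the $(1+L_i)$-Lipschitz bound), so below I write formulas as if $|\nabla f_\eta|=1$.

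\textbf{Pointwise inequality.} Let $A$ be the second fundamental form of $\Sigma_t$ and $\nu=\nabla f_\eta/|\nabla f_\eta|$. The Gauss equation in dimension $3$ gives
\[
Scal = 2K_{\Sigma_t} + 2Ric(\nu,\nu) - 2\det A .
\]
The first variation of $\int_{\Sigma_t}H$ along $\nu$ (Riccati equation $H'=-|A|^2-Ric(\nu,\nu)$ plus the area element derivative $H$) gives
\[
\frac{d}{dt}\int_{\Sigma_t}H = \int_{\Sigma_t}\bigl(2\det A - Ric(\nu,\nu)\bigr).
\]
Eliminating $\det A$ yields
\[
\int_{\Sigma_t}Scal=2\int_{\Sigma_t}K+\int_{\Sigma_t}Ric(\nu,\nu)-\frac{d}{dt}\int_{\Sigma_t}H .
\]
With nonnegative sectional curvature we have $Ric(\nu,\nu)\le \frac12 Scal$, so $\frac12\int_{\Sigma_t}Scal\le 2\int_{\Sigma_t}K-\frac{d}{dt}\int_{\Sigma_t}H$. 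By Gauss--Bonnet, $\int_{\Sigma_t}K=2\pi\chi(\Sigma_t)\le 4\pi$ on each component. Since $\{f_\eta<t\}$ is close to a Euclidean ball and $\Sigma_t$ bounds a region of nonnegative curvature, I expect to show that $\Sigma_t$ is a single sphere (at least for a.e. $t$), or otherwise to control the number of components. Integrating over $t\in[t_1,t_2]$ with the coarea formula then gives
\[
\frac12\int_{f_\eta^{-1}(t_1,t_2)}Scal\le 8\pi(t_2-t_1)-\Bigl(\int_{\Sigma_{t_2}}H-\int_{\Sigma_{t_1}}H\Bigr).
\]
So everything reduces to the lower bound
\[
\int_{\Sigma_{t_2}}H-\int_{\Sigma_{t_1}}H\ge 8\pi(t_2-t_1)-C(1-v).
\]

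\textbf{Mean curvature bound.} This is the analogue of \eqref{2100001-v} and is the main obstacle. For the lower endpoint, Hessian comparison gives $H\le 2/t+\epsilon$, and Bishop--Gromov gives $A_\eta(t)\lesssim 4\pi t^2$, hence $\int_{\Sigma_{t_1}}H\le 8\pi t_1+o(1)$. For the upper endpoint, note that $\int_{\Sigma_t}H=A_\eta'(t)$. I would control it by averaging over windows. Proposition \ref{prop:simpleBG} gives $A(t)\ge 4\pi t^2\bigl(1-\frac{1-v}{1-t^3}\bigr)$ and Bishop--Gromov gives $A(t)\le 4\pi t^2$. Choosing $s\in(\frac1{10},\frac3{10})$ and averaging as in \eqref{2get2'}--\eqref{2le101-v}, we find some $t'\in[t_2-s,t_2]$ with
\[
A_\eta'(t')\ge 8\pi t'-C(1-v)/s .
\]
To transfer this to $t_2$, we need a one-sided monotonicity: $\frac{d}{dt}\int_{\Sigma_t}H\le 2\int\det A\le\frac12\int H^2\le (1/t+\epsilon)\int H$ from $H\le 2/t$. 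Together with $\int_{\Sigma_t} H\le 8\pi t$ up to errors, this should bound the drop of $\int_{\Sigma_t}H-8\pi t$ over short intervals. The delicate point is that this gives an upper bound on growth, whereas a lower bound is needed. So I would instead pick the interval endpoints $t_1',t_2'$ themselves as the good averaged times. That is, we apply the integrated inequality on $[t_1',t_2']$ with $t_1'$ small (where only the crude upper bound is used) and $t_2'$ a good time near $\frac45$. This yields $\int_{\{t_1'<f_\eta<t_2'\}}Scal\le C(1-v)$.

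\textbf{Small ball.} Finally, we must handle $\{f_\eta<t_1'\}$. I would either let $t_1'\to0$, using that $\int_{\Sigma_t}H\to0$ and $A_\eta(t)\to0$ there, or cover $B(p,\frac12)$ by balls of smaller radius centred at other points and rescale. This gives $\int_{B(p,\frac12)}Scal\le C(1-v)$ after letting $\eta\to0$.
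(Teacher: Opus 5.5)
Your proposal is correct and follows the paper's proof. The ingredients match: the same Gauss-equation/Bochner identity on the level sets $\Sigma_t$ of $f_\eta$ (your bound $Ric(\nu,\nu)\le\frac12 Scal$ is exactly the paper's $G\le Sc$, since both amount to $sec(T\Sigma_t)\ge 0$); Gauss--Bonnet $\int_{\Sigma_t}K=4\pi$ on the level spheres (connectedness follows, as in the paper, because $f_\eta$ has no critical points on $\{100\eta<f_\eta<\frac34\}$, so every $\Sigma_t$ is diffeomorphic to a small geodesic sphere); and endpoint bounds on $\int_{\Sigma_t}H$, from Hessian comparison plus Bishop at the lower end and from Propositions \ref{prop:simpleBG} and \ref{AetaHeta} at the upper end. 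Your deviations are harmless and in fact cleaner: averaging $\int_{\Sigma_t}H-8\pi t$ over a window of fixed length $s$ is what genuinely gives an $O(1-v)$ error at the chosen upper time, since the $O(1-v)$ area defect gets divided by the window length, so the window cannot be of size $\epsilon\ll(1-v)^2$; and letting $t_1'\to0$ with monotone convergence (using $Scal\ge0$) replaces the paper's appeal to Petrunin's bound on $B(p,2(1-v)^2)$.
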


\begin{proof}
	By Petrunin's result, it suffices to prove (\ref{Scal3}) in the case when $v$ is close to 1. Let $\delta=1-v$. By Proposition \ref{propAetaA}, we for any $x\in B(p,\frac45)$, there exists $y\in \partial B(p,\frac34)$ such that 
	\begin{equation}\label{3anglepxy>}
		\angle pxy >\pi-100\kappa(\delta).
	\end{equation}
	By proposition \ref{thm:smooth}, for any $\eta>0$, there exists a smooth function $f_{\eta}$, such that
	\begin{equation}
		|f_{\eta}(x)-r(x)|<\eta \text{ for any } x\in \overline{B(p,\frac34)}.
	\end{equation}

	\begin{equation}
		\{x:f_{\eta}(x)<8\eta\} \subset \{x:|px|<10\eta \}.
	\end{equation}

	By \ref{3anglepxy>}, for $x\in B(p,\frac34)$ with $f(x)>100\eta$, we can get that
	\begin{equation}
		\nabla_x f_{\eta}>\frac{4}{5}.
	\end{equation}
	
	Then for $100\eta<t<\frac{3}{4}-\eta$, $f_{\eta}^{-1}(t)$ is a smooth $2$ dim Riemannian manifold diffeomorphic to $S^2$. Let $K$ be the Gaussian curvature of $f_{\eta}^{-1}(t)$. By Gauss-Bonnet formula, we have
	\begin{equation}\label{GB}
		\int_{f_{\eta}^{-1}(t)} K=4\pi.
	\end{equation}
	For $100\eta<a<b<\frac{4}{5}$, recall the Bochner formula
	\begin{equation}\label{3Bochner}
		\int_{f_{\eta}^{-1}(a,b)}Ric(n,n)=\int_{f_{\eta}^{-1}(a,b)} G+\int_{f_{\eta}^{-1}(a)} H-\int_{f_{\eta}^{-1}(b)} H.
	\end{equation}
	where $u=\frac{\nabla f_{\eta}}{|\nabla f_{\eta}|}$ and $G=2\sum_{i<j}k_ik_j$ is the external term in the Gauss formula for the Scalar curvature of $f_{\eta}^{-1}(t)$.
	
	For the Scalar curvature of $x\in M$,we have
	\begin{equation}\label{3Scal2Ric}
		Scal=2Ric(n,n)+Sc-G.
	\end{equation}
	Where $Sc=2K$. 
	
	By \eqref{3Bochner} and \ref{3Scal2Ric}, we have
	\begin{equation}\label{3intScal=intG+}
		\int_{f_{\eta}^{-1}(a,b)}Scal=\int G +\int Sc+2\int_{f_{\eta}^{-1}(a)} H-2\int_{f_{\eta}^{-1}(b)} H.
	\end{equation}
	Since $M$ has non-negative sectional curvature, 
	\begin{equation}\label{3G<=Sc}
		G \le Sc.
	\end{equation}
	By \eqref{3intScal=intG+} and \eqref{3G<=Sc},
	\begin{equation}\label{3intScalle}
		\int_{f^{-1}(a,b)} Scal \le 2\int_{f^{-1}(a,b)}Sc+2\int_{f^{-1}(a)}H -2\int_{f^{-1}(b)} H.
	\end{equation}
	Below, we will estimate $\int_{f_{\eta}^{-1}(a,b)} Sc$ and $\int_{f_{\eta}^{-1}(t)} H$.
	By coarea formula and (\ref{GB}), we have
	\begin{equation}\label{Scnabla}
		\begin{array}{ll}
			\int_{f_{\eta}^{-1}(a,b)} Sc|\nabla f_{\eta}|
			&=\int_a^b \int_{f_{\eta}^{-1}(t)} Sc\\
			&=8\pi(b-a).
		\end{array}
	\end{equation}
	
	We claim that 
	\begin{equation}\label{Claim1}
		\begin{array}{ll}
			\lim_{\eta \to 0} \int_{f_{\eta}^{-1}(a,b)} Sc&=\lim_{\eta \to 0} \int_{f_{\eta}^{-1}(a,b)} Sc\\
			&=8\pi(b-a).
		\end{array}
	\end{equation}
	In fact,
	For $x\in M\backslash C_p$,
	\begin{equation}\label{nablacon}
		|\nabla f_{\eta}(x)| \to |\nabla r(x)|=1 \text{ as } \eta \to 0
	\end{equation}
	and for the characteristic function of subset $U\subset M$, $\chi_U(x)=1$ if $x\in U$, $\chi_U(x)=0$ if $x\notin U$.
	\begin{equation}\label{characon}
		\chi_{f_{\eta}^{-1}(a,b)}(x) \to \chi_{r^{-1}(a,b)}(x).
	\end{equation}
	
	Below, we write $Sc_{\eta}$ instead of $Sc$, to show the dependence of $\eta$. 
	We have
	\begin{equation}\label{1-nablaeta}
		\begin{array}{ll}
			\int_{f_{\eta}^{-1}(a,b)} Sc_{\eta}-\int_{f_{\eta}^{-1}(a,b)} Sc_{\eta}|\nabla f_{\eta}|&=\int_{f_{\eta}^{-1}(a,b)}(1-|\nabla f_{\eta}|) Sc_{\eta}\\
			&=\int_M (1-|\nabla f_{\eta}|) Sc_{\eta}\cdot \chi_{f_{\eta}^{-1}(a,b)}.
		\end{array}
	\end{equation}
	Note that there exists a constant $C>0$, such that 
	\begin{equation}\label{maxsec}
		|Sc_{\eta}|<C \max_{x\in \overline{B(p,\frac{9}{10})}} \max_{e_1,e_2\in T_x M} |sec(e_1,e_2)|
	\end{equation}
	By (\ref{1-nablaeta}), (\ref{maxsec}), \eqref{characon}, and the dominated convergence theorem, 
	\begin{equation}\label{3dominate}
		\lim_{\eta \to 0} \int_M (1-|\nabla f_{\eta}|) Sc_{\eta}\cdot \chi_{f_{\eta}^{-1}(a,b)} \ dvol \to 0.
	\end{equation}
	By \eqref{Scnabla} and \eqref{3dominate}, we can prove the claim \eqref{Claim1}.

	Next, we estimate $\int_{f_{\eta}^{-1}(t)} H_{\eta}$.
	
	Denote by $A_{\eta}(t)$ the area of $f_{\eta}^{-1}(t)$, then
	\begin{equation}\label{3intHHessian}
		\int_{f_{\eta}^{-1}(t)} H_{\eta} \ dvol \le A_{\eta}(t)(\frac{2}{t}+\epsilon).
	\end{equation}
	We can choose 
	\begin{equation}\label{3epsilon<}
		\epsilon<(1-v)^2.
	\end{equation}
	By Proposition \eqref{prop:intAeta}, for 
	\begin{equation}\label{3a=1-v}
		a=(1-v)^2, 
	\end{equation}
	when $\eta$ is sufficiently small, we have
	\begin{equation}\label{3<4pia2}
		\begin{array}{ll}
			A_{\eta}(a)&<A(a)+10(1-v)^2\\
			&\le 4\pi a^2+(1-v)^{100}.
		\end{array}
	\end{equation}
	By \eqref{3<4pia2}, \eqref{3epsilon<}, \eqref{3intHHessian} and \eqref{3a=1-v}, we have 
	\begin{equation}\label{3intHa}
		\begin{array}{ll}
			\int_{f_{\eta}^{-1}(a)} H \ dvol 
			&\le (4\pi a^2+(1-v)^{100})(\frac{2}{a}+(1-v)^2)\\
			&\le 8\pi a+100(1-v)^2.
		\end{array}
	\end{equation}

	By Proposition \ref{propAetaA}, for $t_1,t_2$, and $\eta$ sufficiently small, there exist $t_{\eta} \in [t_1,t_2]$, such that 
	\begin{equation}\label{3teta>}
		\int_{f_{\eta}^{-1}(t_{\eta})} H_{\eta} \ dvol \ge A_{\eta}(t_2)-A_{\eta}(t_1)-10(1-v).
	\end{equation}
	By Proposition \ref{prop:intAeta}, for any $\epsilon>0$,  when $\eta$ is sufficiently small, we have
	\begin{equation}
		|\int_{t_1}^{t_1+\epsilon} A_{\eta}(t)-\int_{t_1}^{t_1+\epsilon} A(t)|\le 10(1-v).
	\end{equation}
	Then there exists $t_1'\in [t_1,t_1+\epsilon]$, such that
	\begin{equation}\label{3101-v}
		A_{\eta}(t_1')\le A(t_1')+10(1-v)
	\end{equation}
	By Bishop inequality, we have 
	\begin{equation}\label{3BGA(t)}
		A(t)\le 4\pi t^2.
	\end{equation}
	By \eqref{3101-v} and \eqref{3BGA(t)}, we have 
	\begin{equation}\label{3t1'<}
		A_{\eta}(t_1') \le 4\pi (t_1')^2+10(1-v).
	\end{equation}
	In the same way, we can prove that there exists $t_2'\in [t_2,t_2+\epsilon]$, such that 
	\begin{equation}\label{3t2'}
		A_{\eta}(t_2')\ge A(t_2')-10(1-v).
	\end{equation}
	By Proposition \ref{prop:simpleBG}, we have 
	\begin{equation}\label{31-v/1-t2}
		A(t)\ge 4\pi t^2(1-\frac{1-v}{1-t^2}).
	\end{equation}
	Let $t_2<\frac45$, by \eqref{31-v/1-t2} and \eqref{3t2'}, we have
	\begin{equation}\label{3t2'>}
		A_{\eta}(t_2')\ge 4\pi t_2'^2-400(1-v).
	\end{equation}
	By \eqref{3t1'<}, \eqref{3t2'>} and \eqref{3teta>}, there exists $t_{\eta}'\in [t_1',t_2'] \subset [t_1,t_2+\epsilon]$, such that 
	\begin{equation}
		\int_{f_{\eta}^{-1}(t_{\eta}')} H_{\eta} \ dvol 
		\ge 4\pi [(t_2')^2-(t_1')^2]+500(1-v).
	\end{equation}
	Then 
	\begin{equation}
		\int_{f_{\eta}^{-1}(t_{\eta}')} H_{\eta} \ dvol \ge 4\pi(t_2^2-t_1^2)+1000(1-v)
	\end{equation}
	if we choose 
	\begin{equation}
		\epsilon<<(1-v)^2.
	\end{equation}
	So we have proved that for any $b>\frac14$, there exists $b'\in [b-100\epsilon, b+100\epsilon]$, such that for sufficiently small $\eta$, we have 
	\begin{equation}\label{3intHb'}
		\begin{array}{ll}
			\int_{f_{\eta}^{-1}(b')} H_{\eta} \ dvol &\ge 4\pi [(b+100\epsilon)^2-(b-100\epsilon^2)]+1000(1-v)\\
			&\ge 8\pi b-2000(1-v).
		\end{array}
	\end{equation}
	By \eqref{3intHa}, \eqref{3intHb'}, \eqref{Claim1} and \eqref{3intScalle}, for sufficiently small $\eta$, we have 
	\begin{equation}\label{3inta-intb}
		\begin{array}{ll}
			\int_{f_{\eta}^{-1}(a,b')} Scal \ dvol &\le 2\int_{f_{\eta}^{-1}(a,b')} Sc+2\int_{f_{\eta}^{-1}(a)} H-\int_{f_{\eta}^{-1}(b')} H \\
			&\le 8\pi(b-a)+100(1-v)^2-[8\pi b-2000(1-v)]+8\pi a+100(1-v)^2\\
			&\le 3000(1-v).
		\end{array})
	\end{equation}
	Note that by \eqref{3a=1-v}, $a=(1-v)^2$. 
	Then Petrunin's theorem, there exists $C>0$, such that
	\begin{equation}\label{3intBp2a}
		\begin{array}{ll}
			\int_{f_{\eta}<a} Scal \ dvol &\le \int_{B(p,2a)} Scal \\
			&\le 2C a\\
			&<2C(1-v)^2.
		\end{array}
	\end{equation}
	By \eqref{3inta-intb}, \eqref{3intBp2a} and note that $b'>\frac12$, we get that
	\begin{equation}
		\int_{B(p,\frac12)} Scal \ dvol \le 4000(1-v).
	\end{equation}
	
\end{proof}

\section{Alexandrov geometry:quantitative version}
\subsection{volume ratio, $(n,\delta)$ strainer and bi-Lipschitz}
In this subsection, we recall the definitions of $(n,\delta)$ strainers in \cite{Burago1992ad} and will give a refined, quantitative version of almost isometry theorem.
The following definition is taken from \cite{Burago1992ad} with small modification, see \cite{burago2001course}.
\begin{defn}[\cite{Burago1992ad},\cite{burago2001course}]
	We shall say that a complete, n dimensional Alexandrov space $\Sigma$ with curvature $\ge 1$ has an $(m,\delta)$ strainer $(A_i,B_i),1\le i \le m$, if $A_i,B_i\subset M$ are compact subsets such that 
	\begin{equation}
		|A_i,B_i|>\pi-\delta, \ |A_i,B_j|>\frac{\pi}{2}-10\delta, |A_iA_j|>\frac{\pi}{2}-10\delta, \ |B_iB_j|>\frac{\pi}{2}-10\delta, \ when \ i \neq j. 
	\end{equation} 
\end{defn}
\begin{defn}[\cite{Burago1992ad},\cite{burago2001course}]
	Let X be an n dimensional Alexandrov space with curvature $\ge k$. A point $p\in X$ is an $(m,\delta)$ strained point if there are m pairs of points $(a_i,b_i)$ in X such that 
	\begin{equation}
		\tilde{\angle}a_ipb_i \ge \pi-\delta,\ \tilde{\angle} a_ipa_j>\frac{\pi}{2}-10\delta,\ \tilde{\angle} a_ipb_j>\frac{\pi}{2}-10\delta, \tilde{\angle} b_ipb_j>\frac{\pi}{2}-10\delta
	\end{equation}
	for all $i,j\in \{1,...,m\}, i\neq j$. The collection $\{(a_i,bi)\}$ itself is called an $(m,\delta)$ strainer for p. 
\end{defn}
\begin{defn}
	For a constant $C(n)$ and small number $\delta>0$,  let $Y$ be a subset of an Alexandrov space X with curvature $\ge k$, we say that $F:Y \to R^n$ a $1\pm C(n) \delta$ bi-Lipschitz map if for any $x,y \in Y$, 
	\begin{equation}
		(1-C(n)\delta)|xy| \le |F(x)F(y)|\le (1+C(n)\delta)|xy|.
	\end{equation}
\end{defn}
For an n-1 dimensional Alexandrov space $\Sigma$ with curvature $\ge 1$. Let 
\begin{equation}
	v:=\frac{vol(\Sigma)}{vol(S^{n-1})}
\end{equation}
be the volume ratio. It's well known that if $v=1-\delta$ with $\delta$ small, by \cite{Burago1992ad}, $\Sigma$ has an $(n,\kappa(\delta))$ strainer $(A_i,B_i), 1\le i \le n$, where $\kappa(\delta) \to 0$ as $\delta \to 0$. We give a refined and quantitative version of this property.
\begin{thm}\label{thm:1-delta-ndelta}
	Let $(\Sigma,d)$ be an n-1 dimensional Alexandrov space with curvature $\ge 1$. If the volume ratio
	\begin{equation}
		v:=\frac{vol(\Sigma)}{vol(S^{n-1})}=1-\delta
	\end{equation}
	for $\delta$ sufficiently small, then $\Sigma$ has an $(n,2\pi \delta)$ strainer. 
\end{thm}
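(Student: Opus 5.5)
Since $\Sigma$ has dimension $n-1$, an $(n,2\pi\delta)$ strainer has to be read through the definition above with $A_i,B_i$ compact subsets of $\Sigma$. My plan is to build it one pair at a time, in the spirit of \cite{Burago1992ad}, and at each step use the volume deficit $\delta$ to control how large a set we can afford to remove. The basic tool is the following volume estimate. If $q\in\Sigma$ and $\rho<\pi$, then Bishop--Gromov (relative volume comparison for curvature $\ge 1$) on the annulus $\{x: \rho\le |qx|\le \pi\}$ gives
\begin{equation*}
\mathrm{vol}\{x\in\Sigma: |qx|\ge \rho\}\le \frac{\mathrm{vol}(\Sigma)}{\mathrm{vol}(S^{n-1})}\cdot \mathrm{vol}\{y\in S^{n-1}: |y_0y|\ge\rho\}.
\end{equation*}
The same comparison gives the matching lower bound on $\mathrm{vol}\,B(q,\rho)$. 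Consequently, for $\rho=\pi-\epsilon$ the set $\{|qx|>\pi-\epsilon\}$ has measure at least $\mathrm{vol}(\text{cap of radius }\epsilon)-\delta\,\mathrm{vol}(S^{n-1})$. This is positive as soon as $\epsilon^{n-1}\gtrsim\delta$. That scaling only yields a strainer of size $\delta^{1/(n-1)}$, so a sharper argument will be needed.

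To get a strainer that is linear in $\delta$, I would switch to a one-dimensional estimate along geodesics. Fix $q$ and consider the $\mathrm{dist}_q$ gradient exponential map from the unit-sphere space of directions into $\Sigma$. By Toponogov it is $1$-Lipschitz on each sphere $S(q,r)$ after rescaling by $\sin r$. Hence
\begin{equation*}
\mathrm{vol}(\Sigma)\le \int_{\Sigma_q}\int_0^{\ell(\xi)}\sin^{n-2}t\,dt\,d\xi ,
\end{equation*}
where $\ell(\xi)\le\pi$ is the length of the $q$-segment in direction $\xi$ and $\mathrm{vol}(\Sigma_q)\le\mathrm{vol}(S^{n-2})$. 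Since $\mathrm{vol}(\Sigma)=(1-\delta)\mathrm{vol}(S^{n-1})$, an averaging argument shows that $\mathrm{vol}(\Sigma_q)\ge (1-\delta)\mathrm{vol}(S^{n-2})$. Take $q=a_1$, choose $b_1$ realizing the maximal distance from $a_1$, and set $A_1=\{a_1\}$, $B_1=\{b_1\}$. The missing length satisfies $\int(\pi-\ell)^{n-1}\lesssim\delta$. I would rather use the radius statement directly: by Grove--Petersen radius rigidity in quantitative form, $\mathrm{rad}\,\Sigma\ge\pi-C\delta$. Here the factor $2\pi\delta$ should come out of the explicit computation $\int_{\pi-\epsilon}^{\pi}\sin^{n-2}$ versus the total volume. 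This is the place where I expect the main obstacle: in dimension $\ge3$, volume deficit $\delta$ naively only gives radius deficit $\delta^{1/(n-1)}$. To sharpen this, I would argue on the space of directions: $\Sigma_{a_1}$ is an $(n-2)$-dimensional Alexandrov space with curvature $\ge1$ and volume ratio $\ge1-\delta$. Then I would use induction on dimension, rather than radius, to keep the deficit linear.

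Concretely, the induction goes as follows. Assume the theorem for dimension $n-2$. The space $\Sigma_{a_1}$ has volume ratio $1-\delta'$ with $\delta'\le\delta$ (from the integral inequality above). So $\Sigma_{a_1}$ carries an $(n-1,2\pi\delta)$ strainer of directions $(\alpha_i,\beta_i)$, $2\le i\le n$. Exponentiate these along geodesics from $a_1$ to distance $\pi/2$, obtaining points $a_i,b_i$. Toponogov comparison with the sphere then gives $|a_ib_i|$, $|a_ia_j|$, $|a_ib_j|$ within $O(\delta)$ of $\pi$ and $\pi/2$ respectively. The distances $|a_1a_i|$, $|a_1b_i|$ equal $\pi/2$, and $|b_1a_i|\ge\pi/2-O(\delta)$ holds because $|a_1b_1|\ge\pi-2\pi\delta$ and the triangle inequality applied with the Toponogov bound. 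The base case $n-1=1$ is a circle of length $2\pi(1-\delta)$, which has an obvious $(2,2\pi\delta)$ strainer. The points at distance exactly $\pi/2$ must exist, i.e. the geodesics must extend. To ensure this, I would replace points by the compact sets $A_i,B_i$ allowed in the definition, namely closures of the sets of endpoints of extendable segments near the prescribed directions. Keeping all error constants at most $2\pi$ (and at most $10\cdot 2\pi\delta$ for the orthogonality conditions) is then bookkeeping.
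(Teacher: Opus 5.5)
\textbf{The gap: Toponogov gives the wrong inequality.} Your construction breaks when you exponentiate the strainer $(\alpha_i,\beta_i)$ of $\Sigma_{a_1}$ to points at distance $\pi/2$. For curvature $\ge 1$, Toponogov's hinge comparison at $a_1$ with both sides equal to $\pi/2$ gives $\cos|a_ib_i|\ge\cos\angle(\alpha_i,\beta_i)$, i.e.\ $|a_ib_i|\le\angle(\alpha_i,\beta_i)$. Likewise $|a_ia_j|\le\angle(\alpha_i,\alpha_j)$ and $|a_ib_j|\le\angle(\alpha_i,\beta_j)$. These are only upper bounds. An $(n,2\pi\delta)$ strainer needs the lower bounds $|a_ib_i|>\pi-2\pi\delta$ and $|a_ia_j|,|a_ib_j|>\frac{\pi}{2}-20\pi\delta$. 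Since the sides are $\pi/2$, $|a_ib_i|$ equals the comparison angle $\tilde{\angle}a_ia_1b_i$. So the lower bound you need says that comparison angles at $a_1$ are $O(\delta)$-close to actual angles. That is a consequence of having a strainer, not something you may assume.

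So the pairs with $i,j\ge 2$ are not controlled at all. Only the pairs involving $a_1,b_1$ are, via the triangle inequality. There is a second, independent problem. Your integral inequality only bounds by $O(\delta)$ the measure of directions at $a_1$ whose segments stop before length $\pi/2$. Hence directions of long segments are only $O(\delta^{1/(n-2)})$-dense, and passing to ``nearby endpoints'' does not keep the errors linear in $\delta$.

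\textbf{How the paper avoids this.} The paper never converts angles into distances. It uses Grove--Petersen in the form $\mathrm{rad}\,\Sigma\ge\pi\,\mathrm{vol}(\Sigma)/\mathrm{vol}(S^{n-1})=\pi(1-\delta)$ at every step. This bound is already linear, so your $\delta^{1/(n-1)}$ worry does not arise. Each $b_i$ is chosen as a point with $|a_ib_i|\ge\pi(1-\delta)$. Each $a_{i+1}$ is a maximum point of $f_i=\min_{j\le i}\{|a_jx|,|b_jx|\}$, and maximality forces $f_i(a_{i+1})\ge\frac{\pi}{2}(1-\delta)$:
\begin{itemize}
\item if both points of some pair are active, this follows from $|a_jx|+|b_jx|\ge|a_jb_j|$;
\item otherwise, $\mathrm{vol}(\Sigma_{a_{i+1}})\ge(1-\delta)\mathrm{vol}(S^{n-2})$ (your own integral inequality) and first variation give a direction at $a_{i+1}$ along which all active distance functions increase, contradicting maximality.
\end{itemize}
The lower bounds involving $b_{i+1}$ then come from the perimeter bound $|xy|+|yz|+|zx|\le 2\pi$. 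It gives $|a_ja_{i+1}|\le 2\pi-|a_jb_j|-|b_ja_{i+1}|=\frac{\pi}{2}+O(\delta)$, hence $|a_jb_{i+1}|\ge|a_{i+1}b_{i+1}|-|a_ja_{i+1}|\ge\frac{\pi}{2}-O(\delta)$, and similarly with $b_j$. These lower bounds come from far points supplied by the radius bound rather than from directions at $a_1$, and they are exactly what your argument is missing.
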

\begin{proof}
	By \cite{GP}, the radius of $\Sigma$ satisfies
	\begin{equation}
		\begin{array}{ll}
			rad \Sigma&=\min_p\max_q|pq|\\
			&\ge \frac{vol(\Sigma)}{vol(S^{n-1})}\pi\\
			&=\pi(1-\delta).
		\end{array}
	\end{equation}
	Then for any $x\in \Sigma$, there exists $y\in \Sigma$, such that $|xy|>\pi(1-\delta)$. Choose arbitrary point $a_1\in \Sigma$, then choose $b_1\in \Sigma$ such that
	\begin{equation}
		|a_1b_1|\ge \pi(1-\delta).
	\end{equation}
	For a geodesic $\gamma_1$ connecting $a_1$ and $b_1$, choose points $a_2$ on $\gamma_1$, such that 
	\begin{equation}
		|a_1a_2|=\frac{\pi}{2}.
	\end{equation}
	Choose $b_2\in \Sigma$, such that 
	\begin{equation}
		|a_2b_2|\ge \pi(1-\delta).
	\end{equation}
	Once $a_1,b_1,a_2,b_2,...,a_i,b_i, 2\le i \le n-1$ have been chosen and $(a_1,b_1),...,(a_i,b_i)$ form an $(i,2\pi \delta)$ strainer. We choose $a_{i+1}$ in the following way. Consider the function
	\begin{equation}
		f_i(x)=\min\{|a_1x|,|b_1x|,...,|a_ix|,|b_ix|\}.
	\end{equation}
	Let $a_{i+1}\in \Sigma$ be a maximum point of $f_i$. By \cite{Burago1992ad}, there exists $1\pm \kappa(\delta)$ bi-Lipschitz map from $\Sigma$ to $S^{n-1}(1)$, we have 
	\begin{equation}\label{ITh>-kappa}
		|f_i(a_{i+1})-\frac{\pi}{2}|<\kappa(\delta).
	\end{equation} Then choose $b_{i+1}\in \Sigma$ such that 
	\begin{equation}\label{DTh:ai+1bi+1}
		|a_{i+1}b_{i+1}|\ge \pi(1-\delta).
	\end{equation}
	
	\textbf{Claim}: We claim that 
	\begin{equation}\label{DTh:claim}
		|f_i(a_{i+1})-\frac{\pi}{2}|\le \frac{\pi}{2} \delta,
	\end{equation}
	If this claim holds, then
	\begin{equation}
		|a_ja_{i+1}|,|b_ja_{i+1}|\ge \frac{\pi}{2}-\frac{\pi}{2}\delta \ for \ 1\le j \le i.
	\end{equation}
	since the perimeter of any geodesic triangle in $\Sigma$ is not more than $2\pi$. Then for $1\le j\le i$, 
	\begin{equation}\label{DTh:claimtuilun}
		\begin{array}{ll}
			|a_ja_{i+1}|,|b_ja_{i+1}|
			&\le 2\pi-(\pi-\delta)-(\frac{\pi}{2}-\frac{\pi}{2}\delta)\\
			&=\frac{\pi}{2}+(\frac{\pi}{2}+1)\delta.
		\end{array}
	\end{equation}
	By \eqref{DTh:claim} and \eqref{DTh:claimtuilun},
	\begin{equation}
		\begin{array}{ll}
			|a_jb_{i+1}|,|b_jb_{i+1}|
			&\ge \pi(1-\delta)-[\frac{\pi}{2}+(\frac{\pi}{2}+1)\delta]\\
			&=\frac{\pi}{2}-(\frac32\pi+1) \delta 
		\end{array}
	\end{equation}
	hence $(a_j,b_j)_{j=1}^{i+1}$ form an $(i+1,2\pi\delta)$ strainer. Repeat the process, eventually, we can get an $(n,2\pi \delta)$ strainer $(a_j,b_j)_{j=1}^n$. 
	
	\textbf{Below, we will prove the claim.}

	First of all, for $1\le j \le i$, since 
	\begin{equation}
		|a_jb_j|\ge \pi(1-\delta)
	\end{equation}
	and 
	\begin{equation}
		|a_ja_{i+1}|+|b_ja_{i+1}|+|a_jb_j|\le 2\pi,
	\end{equation}
	then 
	\begin{equation}
		|a_ja_{i+1}|+|b_ja_{i+1}|\le \pi(1+\delta).
	\end{equation}
	Hence 
	\begin{equation}
		f_i(a_{i+1})\le \frac{\pi}{2}(1+\delta).
	\end{equation}

	W.L.O.G, suppose that 
	\begin{equation}\label{minfi}
		|a_1a_{i+1}|=f_i(a_{i+1})=\min\{|a_1a_{i+1}|,|b_1a_{i+1}|,...,|a_ia_{i+1}|,|b_ia_{i+1}|\}.
	\end{equation}
	\textbf{Case 1},
	\begin{equation}
		|a_1a_{i+1}|=|b_1a_{i+1}|,
	\end{equation}
	since 
	\begin{equation}
		|a_1a_{i+1}|+|b_1a_{i+1}|\ge |a_1b_1|=\pi(1-\delta).
	\end{equation}
	Then 
	\begin{equation}
		|a_1a_{i+1}|\ge \frac{\pi}{2}(1-\delta).
	\end{equation}
	Hence 
	\begin{equation}
		|a_j a_{i+1}|\ge \frac{\pi}{2}(1-\delta),\ |b_ja_{i+1}|\ge \frac{\pi}{2}(1-\delta) \ for \ 1 \le j \le i.
	\end{equation}
	Then $(a_j,b_j)_{j=1}^{i+1}$ form an $(i+1,\pi \delta)$ strainer.

	\textbf{Case 2}, 
	\begin{equation}\label{case 2}
		|a_1a_{i+1|}|<|b_ja_{i+1}| \ for \ 1\le j \le i \ and\  |a_1a_{i+1}|<|a_ja_{i+1}|\ for \ 2\le j \le i.
	\end{equation}
	By \eqref{ITh>-kappa}, for any comparison angle,
	\begin{equation}
		\tilde{\angle}a_1 a_{i+1} b_1>\frac34 \pi. 
	\end{equation}
	For any geodesic $\gamma$ connecting $b_1$ and $a_{i+1}$, along $\gamma$, $a_{i+1}$ move towards $b_1$. By the first variation formula, $dist_{a_1}$ increases, while \eqref{case 2} still holds. This contradicts to that $a_{i+1}$ is the maximum point of $f_i$.

	\textbf{Case 3}, 
	\begin{equation}
		|a_1a_{i+1|}|=|b_ja_{i+1}| \ for \ some \  j\ge 2 \ or \  |a_1a_{i+1}|=|a_ja_{i+1}|\ some \ j.
	\end{equation}
	Suppose that 
	\begin{equation}
		|a_1a_{i+1}|=|a_{i_1}a_{i+1}|=...=|a_{i_k}a_{i+1}|, 2\le i_1<...<i_k \le i
	\end{equation}
	and 
	\begin{equation}
		|a_1a_{i+1}|=|b_{j_1}a_{i+1}|=...=|b_{j_l}a_{i+1}|, 2\le j_1<...<j_l \le i.
	\end{equation}
	
	If there exists $j\ge 2$, such that 
	\begin{equation}
		|a_1a_{i+1}|=|a_ja_{i+1}|=|b_ja_{i+1}|.
	\end{equation}
	Since 
	\begin{equation}
		|a_ja_{j+1}|+|b_ja_{j+1}|\ge \pi(1-\delta).
	\end{equation}
	Then 
	\begin{equation}
		|a_1a_{i+1}| \ge \frac{\pi}{2}(1-\delta).
	\end{equation}
	So we assume that no number in $\{i_1,...,i_k\}$ and $\{j_1,...,j_l\}$ are equal. Then since $i\le n-1$,
	\begin{equation}
		k+l \le i-1\le n-2.
	\end{equation}
	Consider $\Sigma_{a_{i+1}}$, by \eqref{ITh>-kappa}, the comparison angles 
	\begin{equation}
		\begin{array}{ll}
			\tilde{\angle}a_ja_{i+1}b_j&>\pi-\kappa(\delta),  \ |\tilde{\angle} a_ja_{i+1}b_k-\frac{\pi}{2}|<\kappa(\delta) \ for \ 1\le j,k\le i.\\
			|\tilde{\angle}b_j a_{i+1}b_k-\frac{\pi}{2}|&<\kappa(\delta)\ for \ j\neq k.
		\end{array}
	\end{equation}
	Then any distance of two subsets $\Uparrow_{a_{i+1}}^{a_{i_1}},...,\Uparrow_{a_{i+1}}^{a_{i_k}},\Uparrow_{a_{i+1}}^{a_{j_1}}...,\Uparrow_{b_{i+1}}^{a_{j_s}}$ are less than $\frac{\pi}{2}+100\delta$. 
	Note that by Toponogov comparison, 
	\begin{equation}
		\begin{array}{ll}
			vol(\Sigma)&\le vol(\Sigma_{a_{i+1}})\cdot \int_0^{\pi} \sin^{n-2}(t) dt \\
			&=\frac{vol(\Sigma_{a_{i+1}})}{vol(S^{n-2})}\cdot \int_0^{\pi} vol(S^{n-2})\sin^{n-2}(t) dt \\
			&=\frac{vol(\Sigma_{a_{i+1}})}{vol(S^{n-2})}\cdot vol(S^{n-1}).
		\end{array}
	\end{equation}
	Then 
	\begin{equation}
		\begin{array}{ll}
			vol(\Sigma_{a_{i+1}}) &\ge vol(S^{n-2})
			\frac {vol(\Sigma)}{vol(S^{n-1})}\\
			&=vol(S^{n-2})(1-\delta).
		\end{array}
	\end{equation}
	Then there are a $1\pm \kappa(\delta)$ bi-Lipschitz from $\Sigma_{a_{i+1}}$ to $S^{n-2}$. Hence there exists $\xi \in \Sigma_{a_{i+1}}$ such that 
	\begin{equation}
		|\xi \Uparrow_{a_{i+1}}^{a_1}|>\frac{\pi}{2}+c(n),\  |\xi\Uparrow_{a_{i+1}}^{a_{i_s}}|>\frac{\pi}{2}+c(n)\ 1\le s \le k,|\Uparrow_{a_{i+1}}^{b_{j_s}} \xi|>\frac{\pi}{2}+c(n), 1\le s \le l.
	\end{equation}
	More $a_{i+1}$ along the direction $\xi$, then $dist_{a_1},dist_{a_{i_1}}...,dist_{a_{i_s}}, dist_{b_{j_1}},...,dist_{b_{j_l}}$ increases, this contradicts to the fact the $a_{i+1}$ is a maximum point of $f_i$. 
	
	So the claim holds, hence there exists an $(n,2\pi \delta)$ strainer $(a_j,b_j)_{j=1}^n$. 
\end{proof}

Repeat the proof of \cite{Burago1992ad}, write clearly $\kappa(\delta)$ in each step. We can prove the following  quantitative version of almost isometry theorems. See the Appendix for detailed proofs.
\begin{thm}
	Let X be an n dimensional Alexandrov space with curvature $\ge 0$. If p is a point with an $(n,\delta)$ strainer $(A_i,B_i)_{i=1}^n$ with $|A_ip|\ge l, |B_ip| \ge l$. Then for $r_0<\delta l$, there exists a $1\pm C(n)\delta$ bi-Lipschitz homeomorphism from $B(p,r_0)$ onto a domain of $R^n$. 
\end{thm}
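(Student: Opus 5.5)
Following Burago--Gromov--Perelman \cite{Burago1992ad}, I will use the distance coordinates of the strainer. The map is
\begin{equation*}
F:B(p,r_0)\to \bR^n,\qquad F(x)=(|A_1x|-|A_1p|,\dots,|A_nx|-|A_np|),
\end{equation*}
where $|A_ix|$ means the distance to the compact set $A_i$. The goal is to show that $F$ is a $1\pm C(n)\delta$ bi-Lipschitz homeomorphism onto an open set. The point is to keep every $\kappa(\delta)$ of the original proof explicitly linear in $\delta$.

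\textbf{Step 1: propagate the strainer.} For $x\in B(p,r_0)$ with $r_0<\delta l$, the comparison angles at $x$ move by at most $O(r_0/l)=O(\delta)$ relative to those at $p$, using curvature $\ge 0$ and the fact that $|A_ix|,|B_ix|\ge l-r_0$. So $(A_i,B_i)$ is an $(n,C\delta)$ strainer at every $x\in B(p,r_0)$. I will then pass from comparison angles to actual angles between directions in $\Sigma_x$: the directions $\Uparrow_x^{A_i}$ and $\Uparrow_x^{B_i}$ are nearly antipodal, within $C\delta$, and pairwise nearly orthogonal, with $|\angle-\pi/2|\le C\delta$.

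\textbf{Step 2: two-sided Lipschitz estimate.} Take $x,y\in B(p,r_0)$ and a geodesic direction $\xi=\uparrow_x^y$. By the first variation formula together with Toponogov comparison on the scale $|xy|\le 2r_0\ll \delta l$, I will show
\begin{equation*}
|A_iy|-|A_ix|=-|xy|\cos\angle(\xi,\Uparrow_x^{A_i})+O(\delta)|xy|.
\end{equation*}
The key quantitative point is that the second-order error is of size $|xy|^2/l\le \delta|xy|$, which is what the hypothesis $r_0<\delta l$ is for. The upper bound $|F(x)F(y)|\le (1+C\delta)|xy|$ then follows from near-orthogonality. This needs the fact that in $\Sigma_x$, which has curvature $\ge 1$, the sum $\sum_i\cos^2\angle(\xi,\Uparrow^{A_i})$ is at most $1+C\delta$; I will prove it by comparing with the $n$ nearly orthonormal directions in $\Sigma_x$ through a Gram-matrix estimate.

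\textbf{Step 3: the lower bound, which I expect to be the main obstacle.} I need $\sum_i\cos^2\angle(\xi,\Uparrow^{A_i})\ge 1-C\delta$ for every direction $\xi$. In the original proof this comes from a compactness and induction argument and gives only $\kappa(\delta)$. I will replace it by induction on dimension using the strainer inside $\Sigma_x$, which is an $(n-1)$-dimensional space with curvature $\ge 1$ carrying an $(n,C\delta)$ strainer $\{\Uparrow^{A_i},\Uparrow^{B_i}\}$. I would first try to show directly that the map
\begin{equation*}
\xi\mapsto\bigl(\cos\angle(\xi,\Uparrow^{A_i})\bigr)_{i=1}^n
\end{equation*}
lands within $C\delta$ of the unit sphere. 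For this I would use the triangle inequalities $\angle(\xi,A_i)+\angle(\xi,B_i)\le \pi+C\delta$, then apply Toponogov in $\Sigma_x$ to the nearly orthogonal frame and induct on the number of strainer directions (the $n-1$ dimensional quantitative statement applied in $\Sigma_x$). These details are deferred to the Appendix.

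\textbf{Step 4: homeomorphism onto a domain.} Once $F$ is bi-Lipschitz, it is injective and continuous. Openness of its image follows from invariance of domain, or from the standard argument in \cite{Burago1992ad}, since a strained neighborhood in an $n$-dimensional space is a topological manifold.
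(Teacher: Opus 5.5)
Your proposal follows essentially the same route as the paper. The paper uses the same distance-coordinate map $F(x)=(|A_ix|-|A_ip|)_i$. It gets the first-order expansion of $|A_iy|-|A_ix|$ with $O(\delta)|xy|$ error from the law of cosines for $\tilde\angle A_ixy$, combined with Lemma 5.6 of \cite{Burago1992ad} ($\angle A_ixy<\tilde\angle A_ixy+20\delta$, using that the strainer persists on all of $B(p,r_0)$, as in your Step 1). It then reduces to $\sum_i\cos^2\angle(\xi,\Uparrow_x^{A_i})=1\pm C(n)\delta$ in $\Sigma_x$, proved by induction on dimension (Theorem \ref{Thm:Sumcos}), and cites \cite{Burago1992ad} for the homeomorphism onto a domain. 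That induction gives the upper bound as well as the lower bound, so your separate Gram-matrix argument for the upper bound (not straightforward in a non-linear space of directions) is unnecessary.
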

\begin{thm}\label{Thm:BiLipfromSigmatoSn-1}
	There exists $C(n)$. Let $\Sigma$ be an n-1 dimensional Alxandrov space with an $(n,\delta)$ strainers $\{(a_i,b_i)\}_{i=1}^n$. Then there exist a $1\pm C(n)\delta$ bi-Lipschitz homeomorphism
	\begin{equation}
		G:\Sigma \to S^{n-1}(1),\ (1-C(n)\delta)|xy|<|G(x)G(y)|<(1+C(n)\delta)|xy|.
	\end{equation}
\end{thm}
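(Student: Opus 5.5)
We will prove Theorem \ref{Thm:BiLipfromSigmatoSn-1} by passing to the cone $C(\Sigma)$ and using the preceding theorem (the local quantitative almost-isometry statement for points with an $(n,\delta)$ strainer in an $n$-dimensional space of curvature $\ge 0$). The cone is $n$-dimensional with curvature $\ge 0$. The rays $\gamma_{a_i},\gamma_{b_i}$ from the apex $o$ through $a_i,b_i$ give strainers for points $x=(1,\xi)$, $\xi\in\Sigma$, once we take the points $A_i=(L,a_i)$, $B_i=(L,b_i)$ far out.

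\textbf{Step 1: straining every point of the cone's unit sphere.} We first check, using the cone metric formula, that for each $\xi\in\Sigma$ the pairs $\{(A_i,B_i)\}$ form an $(n,C\delta)$ strainer at $x=(1,\xi)$. This uses only that the $a_i,b_i$ strain $\Sigma$ in the sense of Definition of $(m,\delta)$ strainer for the space, together with $\operatorname{rad}$ and perimeter bounds in curvature $\ge 1$. At first sight $|a_i\xi|$ is arbitrary, so $x$ need not be strained by these. The better construction is the map
\[
\Phi(x)=\bigl(\beta_1(x),\dots,\beta_n(x)\bigr),\qquad \beta_i(x)=\lim_{L\to\infty}\bigl(L-|A_ix|\bigr)=|ox|\cos|a_i\xi|,
\]
that is, the Busemann functions of the rays $\gamma_{a_i}$. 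This is exactly the Burago--Gromov--Perelman coordinate map $x\mapsto(|A_ix|)$ in the limit. Hence we define
\[
G_0(\xi)=\bigl(\cos|a_1\xi|,\dots,\cos|a_n\xi|\bigr)\in\bR^n
\]
and then $G=G_0/|G_0|$.

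\textbf{Step 2: the differential estimate.} Using the strainer inequalities in $\Sigma_\xi$ (the space of directions, which has curvature $\ge1$ and dimension $n-2$), together with the fact that the opposite pair $a_i,b_i$ forces $\nabla\,\mathrm{dist}_{a_i}$ and $\nabla\,\mathrm{dist}_{b_i}$ to be $C\delta$-antipodal, we would show the following. The directional derivatives $d_\xi G_0(v)=(-\sin|a_i\xi|\cos\angle(v,\Uparrow^{a_i}_\xi))_i$ differ by $C\delta|v|$ from an isometric embedding of the tangent cone $T_\xi\Sigma$ into $T_{G_0(\xi)}S^{n-1}$. This is where the preceding local theorem is applied. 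On the cone at $x=(1,\xi)$, the $n$ pairs $(A_i,B_i)$ with $L$ large and $l\sim L$ form an $(n,C\delta)$ strainer. Here we use that $|a_ib_i|>\pi-\delta$ and that $|a_ia_j|,|a_ib_j|,|b_ib_j|\approx\pi/2$; by triangle comparison on the sphere, the comparison angles at $x$ inherit these bounds up to $C\delta$, uniformly in $\xi$, since $x$ is at distance $1$ from $o$ while $A_i$ is at distance $L$. Therefore $\Phi$ is $1\pm C(n)\delta$ bi-Lipschitz on $B(x,r_0)$ for a fixed $r_0=c(n)$; we need an $r_0$ not shrinking with $\delta$, so we use the version with $r_0<\delta l$ and take $l=L/\delta$. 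Since $\Phi$ is positively homogeneous, $\Phi|_{\{|o\cdot|=1\}}=G_0$, and $|G_0|\in[1-C\delta,1+C\delta]$, it follows that $G$ is locally $1\pm C(n)\delta$ bi-Lipschitz at scale $c(n)$.

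\textbf{Step 3: globalization and bijectivity.} A map that is locally $1\pm C\delta$ bi-Lipschitz at scale $c(n)$ between geodesic spaces is globally $(1+C\delta)$-Lipschitz, since we can integrate along geodesics. The map $G$ is a local homeomorphism from a compact space into $S^{n-1}$, hence a covering. As $n-1\ge2$ (for $n=2$ the statement is direct, since $\Sigma$ is a circle of length $\ge2\pi(1-\delta)$ and $\le 2\pi$), $S^{n-1}$ is simply connected, so $G$ is a homeomorphism. The lower bound $|G(x)G(y)|\ge(1-C\delta)|xy|$ then follows by applying the same local argument to $G^{-1}$ along a spherical geodesic from $G(x)$ to $G(y)$.

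\textbf{Main obstacle.} We expect the main obstacle to be Step 2, namely keeping the error linear in $\delta$ rather than $\kappa(\delta)$. The original Burago--Gromov--Perelman argument loses control through compactness and induction on dimension. We would first redo their first-variation estimates with explicit spherical trigonometry, using that the pairwise comparison angles are within $10\delta$ of $\pi/2$ and the opposite angles within $\delta$ of $\pi$. This gives an almost-orthonormality of the $n$ gradient directions with Gram matrix $I+O(\delta)$, whose inverse is $I+O(\delta)$ once $\delta<c(n)$. We would defer the full bookkeeping to the Appendix, as the paper indicates.
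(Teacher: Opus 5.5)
Your proof is correct, but it handles the key step differently from the paper.

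\textbf{The paper's route.} The paper fixes the far points $A_i=(a_i,100\delta^{-1})$ and observes that they strain the apex. This gives a single $1\pm C(n)\delta$ chart $F$ on $B(o,2)$, so $G=P\circ F|_\Sigma$ is handled globally without any topology. Pairs with $|xy|_\Sigma>1/10$ are treated by the cosine law. Pairs with $|xy|_\Sigma\le 1/10$ need a separate computation of $\angle oF(x)F(y)$, using the second inequality of Theorem~\ref{Thm:Sumcos} and the law of sines. That extra work is forced because $F$ is not homogeneous. Radially projecting a shell of thickness $C\delta$ is not $1\pm C\delta$ bi-Lipschitz at scales below $\delta$.

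\textbf{What your route buys.} Your Busemann map $\Phi$ is exactly homogeneous, and that is what makes small-scale angular control automatic. However, the sentence ``it follows that $G$ is locally $1\pm C(n)\delta$ bi-Lipschitz'' hides the crux, so write it out:
\begin{itemize}
\item Over $s\ge 0$, the minimum of $|(1,\xi)(s,\eta)|$ is $\sin t$, where $t=|\xi\eta|$.
\item The minimum of $|\Phi(1,\xi)-\Phi(s,\eta)|=|G_0(\xi)-sG_0(\eta)|$ is $|G_0(\xi)|\sin\theta$, where $\theta=|G(\xi)G(\eta)|$.
\item The radial direction gives $|G_0|=1\pm C\delta$.
\item Hence $\sin\theta=(1\pm C\delta)\sin t$, so $\theta=(1\pm C\delta)t$ for small $t$.
\end{itemize}
Your globalization also has an advantage. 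You get the upper bound by subdividing geodesics in $\Sigma$ and the lower bound by doing the same for $G^{-1}$ along spherical geodesics. This covers pairs with $|xy|_\Sigma$ near $\pi$. There, an estimate $|\cos|G(x)G(y)|-\cos t|<c\delta$ only gives $O(\sqrt{\delta})$ accuracy, and that estimate is all the paper's $t>1/10$ step rests on.

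\textbf{Smaller remarks.}
\begin{itemize}
\item Your worry in Step 1 that $(1,\xi)$ ``need not be strained'' is unfounded. As you compute in Step 2, the comparison angles at any point at bounded distance from $o$ tend to $|a_ib_i|,|a_ia_j|,\dots$ as $L\to\infty$, and this includes the apex.
\item Applying the local theorem at $o$ on $B(o,2)$, as the paper does, makes $G$ a bijection directly. Injectivity comes from injectivity of $\Phi$ plus homogeneity. Surjectivity holds because $\Phi(B(o,2))$ is open and contains $0$. You could then drop the covering argument and the separate case $n=2$.
\item The Step 2 claim that $d_\xi G_0$ is within $C\delta|v|$ of an isometric embedding into $T_{G_0(\xi)}S^{n-1}$ is stronger than what the local theorem gives. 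You never use it, so drop it.
\end{itemize}
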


By theorem \ref{thm:1-delta-ndelta} and theorem \ref{Thm:BiLipfromSigmatoSn-1}, we get 
\begin{thm}
	There exits $C(n)$. If $\Sigma$ is an n-1 dimensional Riemannian manifold with curvature $\ge 1$. 
	\begin{equation}
		\frac{vol(\Sigma)}{vol(S^{n-1}(1))}=1-\delta
	\end{equation}
	with $\delta$ sufficiently small. Then there exists a $1\pm C(n)\delta$ bi-Lipschitz homeomorphism
	\begin{equation}
		G:\Sigma \to S^{n-1}(1),\ (1-C(n)\delta)|xy|<|G(x)G(y)|<(1+C(n)\delta)|xy|.
	\end{equation}
\end{thm}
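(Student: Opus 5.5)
The statement is a direct composition of the two preceding results, so the plan is to chain them. A Riemannian manifold with sectional curvature $\ge 1$ is in particular an $(n-1)$-dimensional Alexandrov space with curvature $\ge 1$. Its Riemannian volume coincides with its $(n-1)$-dimensional Hausdorff measure, so the volume ratio in the hypothesis is the same quantity used in Theorem \ref{thm:1-delta-ndelta}.

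\emph{Step 1.} Apply Theorem \ref{thm:1-delta-ndelta} to $\Sigma$ with $v=1-\delta$, $\delta$ sufficiently small. This produces an $(n,2\pi\delta)$ strainer $\{(a_i,b_i)\}_{i=1}^n$ in $\Sigma$. The construction proceeds by induction. Start from the radius bound $\mathrm{rad}\,\Sigma\ge \pi(1-\delta)$ of \cite{GP}. Then repeatedly take maximum points of $f_i=\min_j\{|a_jx|,|b_jx|\}$ and pair each with an almost-antipodal point. The perimeter bound $2\pi$ for triangles converts the lower bounds $\ge \frac{\pi}{2}(1-\delta)$ into two-sided estimates linear in $\delta$.

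\emph{Step 2.} Feed this strainer into Theorem \ref{Thm:BiLipfromSigmatoSn-1} with $\delta$ replaced by $\delta'=2\pi\delta$. For this we must check that $\delta'$ lies in the smallness range required there, which holds once $\delta<\delta_0(n)$. The theorem then gives a homeomorphism $G:\Sigma\to S^{n-1}(1)$ with
\begin{equation*}
(1-C(n)\cdot 2\pi\delta)|xy|<|G(x)G(y)|<(1+C(n)\cdot 2\pi\delta)|xy|.
\end{equation*}
Renaming $2\pi C(n)$ as $C(n)$ gives the claimed $1\pm C(n)\delta$ bi-Lipschitz homeomorphism.

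The main obstacle is not this composition, which is formal. It lies in Theorem \ref{Thm:BiLipfromSigmatoSn-1}, namely in the quantitative version of the Burago--Gromov--Perelman almost-isometry argument. There one must trace every $\kappa(\delta)$ in \cite{Burago1992ad} and show each is actually $O(\delta)$. Concretely, this means showing that the strainer map built from distance functions to the $a_i$ (cone over $\Sigma$, or the map $x\mapsto(\cos|a_ix|)_i$ normalized to the sphere) has differential that is $C(n)\delta$-close to an isometry at every point. That closeness comes from first variation and comparison angles. Its linearity in $\delta$ is exactly what the Appendix is meant to supply, and it is taken as given here.

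A secondary point needs care in Step 1: the use of the non-quantitative $\kappa(\delta)$ bi-Lipschitz result from \cite{Burago1992ad} inside the proof. That result is used only qualitatively, to rule out degenerate configurations and fix the combinatorics of the maximum point. The final constants come from the explicit perimeter estimates, so no circularity arises.
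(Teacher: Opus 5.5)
Your proposal is correct and matches the paper's argument. The paper also obtains this statement in one line: it applies Theorem~\ref{thm:1-delta-ndelta} to get an $(n,2\pi\delta)$ strainer (via the Grove--Petersen radius bound $\mathrm{rad}\,\Sigma\ge\pi(1-\delta)$ and the maximum-point construction), feeds that strainer into Theorem~\ref{Thm:BiLipfromSigmatoSn-1}, and absorbs the factor $2\pi$ into $C(n)$, with all the substantive work left to those two results exactly as you indicate.
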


\subsection{Existence of strictly concave functions}
\begin{defn}
Let $U\subset X$ be an open subset of an n dimensional Alexandrov space with curvature $\ge k$, for $\lambda \in \bR$, we say that $f:U\to \bR$ is $\lambda$-concave if for any unit speed geodesic $\gamma(t)\subset U$, $f\circ \gamma(t)-\frac{\lambda}{2}t^2$ is a concave function. If $\lambda=0$, then $f$ is a concave function.
\end{defn}

The following is a quantitative version theorem on the existence of strictly concave functions on Alexandrov spaces. Our proofs are based on \cite{PereMorse1993}, see also \cite{KapoRegularity}, \cite{petrunin2007semiconcave} and \cite{Nepechi}.

\begin{thm}\label{Theorem:concave}
	Let $\Sigma$ be an $n-1$ dimension Alexandrov space with curvature $\gs 1$, without boundary. Let $C(\Sigma)$ be the cone over $\Sigma$ with apex $o$. Then there exists $c(n)>0$ depending on $vol(\Sigma)$ such that for any $r>0$, if $B(p,2r)$ is an $n$ dimensional Alexandrov space with curvature $\gs 0$ and
	\begin{equation}\label{2.2:condition}
		d_{GH}(B(p,2r),B(o,2r))<2\epsilon r,
	\end{equation}
	where
	\begin{equation}\label{epsilon}
		\epsilon<\frac{1}{10000}(\frac{1}{20})^{2n-2} \frac{vol^2(\Sigma)}{(vol(S^{n-2}))^2}.
	\end{equation}
	then there exists a 1-Lipschitz, $-\frac{2}{r}$-concave function defined on $B(p,\frac{1}{c_2(n)} r)$.
\end{thm}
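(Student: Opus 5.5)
Following Perelman's construction \cite{PereMorse1993}, the plan is to build the function in the model cone $C(\Sigma)$ first, as an average of distance functions from points near the apex, and then transfer it to $B(p,2r)$ through the Gromov--Hausdorff approximation. By scaling we may assume $r=1$, and we use $o$ and the $2\epsilon$-approximation given by \eqref{2.2:condition}. Two facts about the cone drive everything. First, since curvature is $\ge 0$, the distance function $dist_q$ satisfies $\mathrm{Hess}\, \tfrac12 dist_q^2\le 1$. Second, along a unit-speed geodesic $\gamma$ we have $(dist_q\circ\gamma)''\le \frac{1-\langle\gamma',\nabla dist_q\rangle^2}{dist_q}$, so $dist_q$ is strictly concave in every direction transversal to $\uparrow_q$. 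We will take $\rho>0$ small (depending on $n$ and $vol(\Sigma)$) and a maximal $\rho$-separated family of points $\{x_\alpha\}$ on the sphere $\partial B(o,\rho)$. The first candidate is $\phi(x)=\frac1N\sum_\alpha h(|x_\alpha x|)$, where $N$ is the number of points and $h$ is a concave, increasing function with $h'\le 1$ and $h''\le -\frac{c}{\rho}$ on the range of distances involved, for example a scaled logarithm or a quadratic cutoff. Then $\phi$ is $1$-Lipschitz automatically.

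The key step is to prove that, at every point $x\in B(o,\frac{1}{c_2(n)})$ and in every unit direction $\xi\in\Sigma_x$, a definite fraction of the directions $\uparrow_x^{x_\alpha}$ make an angle with $\pm\xi$ bounded away from $0$ and $\pi$. Here the volume of $\Sigma$ enters, and it is the reason $c(n)$ depends on $vol(\Sigma)$. The comparison used earlier (Toponogov, as in the bound $vol(\Sigma)\le vol(\Sigma_{a})\int_0^\pi\sin^{n-2}$) gives that the directions $\{\uparrow_x^{x_\alpha}\}$ form a net in $\Sigma_x$ whose packing density is controlled below by $vol(\Sigma)/vol(S^{n-2})$. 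Hence the proportion of these directions lying in the bad region $\{\eta:|\angle(\eta,\xi)|<\theta\text{ or }>\pi-\theta\}$ is at most $C(n)\theta^{n-2}\,vol(S^{n-2})/vol(\Sigma)$. Choosing $\theta$ accordingly, the $h''$ term and the transversal concavity term together give $\phi''\le -\frac{c}{\rho}$, and the constant $\frac{1}{20}$ in \eqref{epsilon} should come from this choice of $\theta$. Rescaling $\phi$, for instance to $\frac{\rho}{c}\phi$ combined with a suitable multiple, we then obtain $-2$-concavity and $1$-Lipschitz on $B(o,\frac{1}{c_2(n)})$, with $c_2(n)$ chosen so that this ball stays at distance comparable to $\rho$ from the $x_\alpha$.

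Next we transfer to $B(p,2)$. Take points $p_\alpha\in B(p,2)$ that are $2\epsilon$-close to the images of the $x_\alpha$, and define $f=\frac1N\sum_\alpha h(|p_\alpha\,\cdot|)$. The $1$-Lipschitz property carries over directly. For concavity, we use the Toponogov-based estimate from \cite{PereMorse1993}: comparison angles at a point $y$ in $B(p,2)$ differ from those in the cone by at most $C\sqrt{\epsilon}/\rho$, with $\rho$ fixed. Hence the angular separation established in the previous paragraph survives, provided $\sqrt\epsilon\ll\theta\rho$. This is exactly the form of \eqref{epsilon}: $\epsilon$ is bounded by a square of a quantity proportional to $vol(\Sigma)/vol(S^{n-2})$ times $(\frac1{20})^{n-1}$.

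The main obstacle is that concavity is an infinitesimal condition, while Gromov--Hausdorff closeness only controls comparison angles at scale $\gg\sqrt\epsilon$. It does not control actual angles, because those can jump. I would handle this as Perelman does. Instead of a pointwise second derivative, I would verify the concavity inequality $f(\gamma(t))\le \frac{f(\gamma(0))+f(\gamma(2t))}{2}-\frac{\lambda}{2}t^2$ for $t$ of order $\rho$, using only comparison triangles. Then I would pass to infinitesimal concavity using the fact that each summand $h\circ dist_{p_\alpha}$ is already semiconcave with a small error, of order $\epsilon/\rho^2$ times a constant. The lower bound on $vol(\Sigma_x)$ via Bishop--Gromov keeps the bad-direction measure uniform in $x$.
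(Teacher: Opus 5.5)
There is a genuine gap, and it sits in the mechanism that is supposed to produce concavity.

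\textbf{The sign of the second fact.} You read your second fact with the wrong sign. In curvature $\ge 0$ the inequality $(dist_q\circ\gamma)''\le \frac{\sin^2\angle(\gamma',\uparrow_x^{q})}{dist_q}$ is an upper bound by a \emph{positive} quantity. So $dist_q$ is not strictly concave transversally to $\uparrow_x^q$. For example, with $\Sigma=S^{n-1}$ the cone is $\mathbb{R}^n$, and $dist_q$ is convex in those directions. Consequently
\[
(h\circ dist_q\circ\gamma)''\le \frac{h'\sin^2\theta}{dist_q}+h''\cos^2\theta ,\qquad \theta=\angle(\gamma',\uparrow_x^q).
\]
The only negative contribution is $h''\cos^2\theta$. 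It vanishes when $\theta$ is near $\frac{\pi}{2}$, which are exactly the directions you treat as good.

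\textbf{What actually has to be shown.} You need that a definite fraction of the directions $\Uparrow_x^{q_j}$ lies \emph{outside} the equatorial band $\{|\angle(\cdot,\gamma'(0))-\frac{\pi}{2}|<\epsilon_1\}$. Your count of directions in caps around $\pm\xi$ says nothing about this. It cannot exclude that most directions are nearly orthogonal to $\xi$, and then each summand contributes roughly $+1/\rho$.

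\textbf{How the paper does it.} The band has relative volume of order $\epsilon_1$. With $\epsilon_1=(\frac{1}{20})^{n-1}\frac{vol(\Sigma)}{vol(S^{n-2})}$, at most $N/2$ of the $\frac{\delta}{10}$-separated directions fall in it. The profile $h(t)=(t-r)-c\frac{(t-r)^2}{r}$ then needs $c=10/\sin^2\epsilon_1$. This makes the gain $\frac{c}{r}\sin^2\epsilon_1$ on at least half the summands beat the positive term, which is at most $\frac{3}{r}$. The square in \eqref{epsilon} comes from requiring $c\epsilon$ small, so that $h'\ge \frac12$ on the relevant range of $|q_jx|$. It does not come from a $\sqrt{\epsilon}$ error in angles.

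\textbf{The transfer step.} This step is unnecessary, and the version you propose does not work. The paper never builds the function on the cone. It works directly in $B(p,2r)$, where the semiconcavity bound for $dist_{q_j}$ holds intrinsically.

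The only input from Gromov--Hausdorff closeness is a \emph{lower} bound on comparison angles, via Lemma \ref{lem:triangle} and the law of sines. In curvature $\ge 0$ the actual angle is at least the comparison angle. Hence $\{\Uparrow_x^{q_j}\}$ is $\frac{\delta}{10}$-separated in $\Sigma_x$. The counting in the band then uses only relative volume comparison in $\Sigma_x$, which has curvature $\ge 1$. No upper control on actual angles is needed, so the ``angles can jump'' obstacle does not arise.

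Your substitute fails as stated. You propose to prove midpoint concavity at scale $\rho$ and upgrade it to infinitesimal concavity, using that each summand is semiconcave ``with error of order $\epsilon/\rho^2$''. But $h\circ dist_{p_\alpha}$ is only $\lambda$-concave with $\lambda\sim 1/\rho>0$ in transversal directions. Concavity at a fixed scale does not pass to small scales under a positive semiconcavity bound of that size.
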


\begin{proof}
	Let $\{\xi_j\}_j\subset \Sigma$ be a maximal $\delta$-separated subset, where
	\begin{equation}\label{2.2:assume}
		\delta>1000\epsilon.
	\end{equation}
	Then the number
	\begin{equation}\label{eq:N}
		\begin{array}{ll}
			N:=\# \{\xi\}_j &\gs \frac{vol(\Sigma)}{v^1(n-1,\delta)}\\
			&\approx \frac{vol(\Sigma)}{\int_0^{\delta} t^{n-2}vol(S^{n-2})dt}\\
			&=\frac{(n-1)vol(\Sigma)}{vol(S^{n-2})} \delta^{-(n-1)}.
		\end{array}
	\end{equation}
	where $v^1(n-1,\delta)$ is the volume of $\delta$-ball in $S^{n-1}$.
	Denote by $x_j=(\xi_j,r)\in C(\Sigma)$. Lift $x_j$ to $B(p,2r)$, we get $q_j\subset B(p,2r)$. By (\ref{2.2:condition}),
	\begin{equation}
		||pq_j|-r|<2\epsilon r.
	\end{equation}
	The number of $A:=\{\Uparrow_x^{q_j}\}_j$ is equal to $N$.
	
	For $q_k,q_l$, we have
	\begin{equation}\label{gs:qkl}
		\begin{array}{ll}
			|q_kq_l| &\gs |x_ix_j|-2\epsilon r\\
			&= (2\sin \frac{\delta}{2}-2\epsilon)r.
		\end{array}
	\end{equation}
	
	For any $x\in B(p,\frac{r}{10}) $ and $\gamma$ a unit speed geodesic with $\gamma(0)=x$. Consider the functions:
	\begin{equation}
		\varphi_{r,c}=(t-r)-c\frac{(t-r)^2}{r},
	\end{equation}
	and
	\begin{equation}
		f=\frac{1}{N}\sum_j \varphi_{r,c}\circ dist_{q_j}.
	\end{equation}
	Then
	\begin{equation}\label{ls:one}
		(f\circ \gamma)'(0)=\frac{1}{N} \sum_j [1-\frac{2c}{r}(|q_jx|-r)](dist_{q_j}\circ \gamma)'(0).
	\end{equation}
	$c$ will be carefully chosen below such that $[1-\frac{2c}{r}(|q_jx|-r)]>0$, then
	\begin{equation}\label{ls:two}
		[f\circ \gamma]''(0)\ls \frac{1}{N} \sum_j [\frac{1}{|q_jx|}(1-2c\frac{|q_jx|-r}{r})-\frac{2c}{r}\cos^2 \angle(\Uparrow_x^{q_j},\gamma'(0))].
	\end{equation}

	Consider the comparison triangle $\tilde{\Delta}q_kxq_l$. Suppose that $|q_lx|\gs |q_kx|$, then $\tilde{\angle} xq_kq_l> \frac{\pi}{3}$. By the following lemma \ref{lem:triangle}, we have
	\begin{equation}\label{2.2:pi}
		\frac{\pi}{3}<\tilde{\angle} xq_kq_l<\frac{2\pi}{3}.
	\end{equation}
	Since $|q_lx|\ls 2r$, by the law of sines and \ref{2.2:pi},
	\begin{equation}
		\begin{array}{ll}
			\sin \tilde{\angle} q_kxq_l&=|q_kq_l| \frac{\sin \tilde{\angle}xq_kq_l}{|q_lx|}\\
			&\gs r\sin \frac{\delta}{2} \cdot \frac12 \cdot (2r)^{-1}\\
			&>\frac{1}{9}\delta.
		\end{array}
	\end{equation}
	Since $\delta$ is sufficiently small, $|\Uparrow_x^{q_k}\Uparrow_x^{q_l}|_{\Sigma_x} \gs \frac{1}{10}\delta$. Then $\{\Uparrow_x^{q_j}\}$ is a $\frac{1}{10}\delta$-separated subset of $\Sigma_x$.
	Consider the subset
	\begin{equation}
		A_1:=\{\Uparrow_x^{q_j}\subset A: |\angle(\Uparrow_x^{q_j},\gamma'(0))-\frac{\pi}{2}|<\epsilon_1\}.
	\end{equation}
	By relative volume comparison, for any ball $B(z,\frac{\delta}{20})\in \Sigma_x$,
	\begin{equation}
		vol B(z,\delta/20)\geqslant  \frac{v^1(n-1, \delta/20)}{vol(S^{n-1})} vol(\Sigma_x).
	\end{equation}
	Since the subset $\{z\in \Sigma_x, ||z,\gamma'(0)|-\frac{\pi}{2}|<\epsilon_1\}$ has volume less than
	\begin{equation}
		v^1(n-1,\frac{\pi}{2}-\epsilon_1, \frac{\pi}{2}+\epsilon_1) \frac{vol(\Sigma_x)}{v^1(n-1,\frac{\pi}{2}-\epsilon_1)},
	\end{equation}
	where $v^1(n-1,\frac{\pi}{2}-\epsilon_1, \frac{\pi}{2}+\epsilon_1)$ is the volume of the annul $\{z\in S^{n-1}, ||zy|-\frac{\pi}{2}|<\epsilon_1\}$ for a point $y \in S^{n-1}$.
	Then by the above two inequalities, we have
	\begin{equation}\label{eq:N1}
		\begin{array}{ll}
			N_1:=\#A_1 &\ls \frac{v^1(n-1,\frac{\pi}{2}-\epsilon_1, \frac{\pi}{2}+\epsilon_1)}{{v^1(n-1,\frac{\pi}{2}-\epsilon_1)}}\cdot \frac{vol(S^{n-1})}{v^1(n-1, \delta/20)}\\
			&\approx 2\epsilon_1 \frac{vol(S^{n-2})}{\frac12 vol(S^{n-1})}\cdot \frac{vol(S^{n-1})}{\int_0^{\delta} (\frac{t}{20})^{n-2}vol(S^{n-2}) dt}\\
			&=4 (n-1)(20)^{n-2}\epsilon_1 \delta^{-(n-1)}
		\end{array}
	\end{equation}
	
	By (\ref{eq:N}) and (\ref{eq:N1}), we can choose $\epsilon_1= (\frac{1}{20})^{n-1} \frac{vol(\Sigma)}{vol(S^{n-2})}$, then $N_1\ls \frac12 N$.
	
	Then choose $c = \frac{10}{\sin^2 \epsilon_1}$. Consider $x\in B(p,\frac{r}{4c})$, by (\ref{epsilon}),
	\begin{equation}\label{2.2:4c}
		2c\frac{|q_jx|-r}{r}\ls 4c\epsilon<\frac12.
	\end{equation}
	
	By (\ref{ls:two}) and (\ref{2.2:4c}) and note that $N_1\ls \frac12N$, we have
	\begin{equation}
		\begin{array}{ll}
			[f\circ \gamma]''(0) &\ls \frac{3}{r}-\frac{1}{N}\sum_j \frac{2c}{r}\cos^2 \angle(\Uparrow_x^{q_j},\gamma'(0))\\
			&\ls \frac{3}{r}-\frac{1}{N}\frac{2c(N-N_1) \sin^2\epsilon_1}{r} \\
			&\ls -\frac{2}{r}.
		\end{array}
	\end{equation}
	By (\ref{ls:one}) and (\ref{2.2:4c}), we have
	\begin{equation}
		[f\circ \gamma]'(0)< 1.
	\end{equation}
\end{proof}

\begin{lem}\label{lem:triangle}
	\begin{equation}\label{2.2:claim}
		\tilde{\angle} xq_kq_l<\frac{2}{3}\pi.
	\end{equation}
\end{lem}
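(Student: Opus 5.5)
The plan is to argue directly with the Euclidean comparison triangle $\tilde{\Delta}q_kxq_l$, as in the proof of Theorem \ref{Theorem:concave}. Write $a=|xq_k|$, $b=|xq_l|$, $d=|q_kq_l|$, with $b\gs a$. The angle $\tilde{\angle}xq_kq_l$ is opposite the side $b$, so by the law of cosines the claim \eqref{2.2:claim} is equivalent to $\cos\tilde{\angle}xq_kq_l>-\frac12$, that is,
\begin{equation*}
b^2-a^2< ad+d^2 .
\end{equation*}
The trivial bound $b\ls a+d$ only gives $b^2-a^2\ls 2ad+d^2$, which is not enough. When $d\approx \delta r$ is small, $b-a$ must be shown to be much smaller than $d$. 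This can only come from the cone structure: $x$ is very close to the apex, and $q_k,q_l$ lie at almost the same distance $r$ from it.

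\textbf{Step 1: exact estimate in the model cone.} Let $y\in C(\Sigma)$ be a point corresponding to $x$ under the Gromov--Hausdorff approximation, with $s=|oy|\ls \frac{r}{4c}+2\epsilon r$. Write $\theta_j$ for the $\Sigma$-distance between the direction of $y$ and $\xi_j$ (if $s=0$ there is nothing to prove). The cone formula gives $|yx_j|^2=s^2+r^2-2sr\cos\theta_j$ for $\theta_j\ls\pi$. Hence
\begin{equation*}
|yx_l|^2-|yx_k|^2 = 2sr(\cos\theta_k-\cos\theta_l)\ls 2sr\,|\theta_k-\theta_l|\ls 2sr\,|\xi_k\xi_l| .
\end{equation*}
Also $|x_kx_l|=2r\sin\frac{|\xi_k\xi_l|}{2}\gs \frac{2}{\pi}r|\xi_k\xi_l|$. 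Combining, $|yx_l|^2-|yx_k|^2\ls \pi s\,|x_kx_l|$. Since $s$ is a small multiple of $r$ (recall $c=\frac{10}{\sin^2\epsilon_1}$ is large) and $|yx_k|\gs r/2$, this is at most $\frac14|yx_k|\,|x_kx_l|$.

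\textbf{Step 2: transfer to $B(p,2r)$.} By \eqref{2.2:condition}, each of $a,b,d$ differs from its cone counterpart by at most $2\epsilon r$ (up to a harmless factor). All of $a,b$ lie in $[r/2,2r]$, so squaring introduces errors of order $C\epsilon r^2$ in $b^2-a^2$ and in $ad$. On the other hand, \eqref{gs:qkl} and the $\delta$-separation give $d\gs (2\sin\frac{\delta}{2}-2\epsilon)r\gs \frac{\delta}{2}r$. By \eqref{2.2:assume}, $\delta>1000\epsilon$, so $ad\gs \frac{\delta}{4}r^2\gg C\epsilon r^2$. Consequently
\begin{equation*}
b^2-a^2\ls \tfrac14 ad + C\epsilon r^2 < ad+d^2 ,
\end{equation*}
which gives \eqref{2.2:claim}.

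The main obstacle is Step 1 together with the bookkeeping of constants in Step 2. The key points are to use the exact cone distance formula rather than the triangle inequality, and to check that the additive GH error $\epsilon r$ is dominated by $ad\sim\delta r^2$. That domination is exactly where the hypotheses $\delta>1000\epsilon$ and $x\in B(p,\frac{r}{4c})$ (so $s\ll r$) are used. One should also track the case $|\xi_k\xi_l|$ close to $\pi$, but the inequalities $\cos\theta_k-\cos\theta_l\ls|\theta_k-\theta_l|$ and $\sin\frac{t}{2}\gs\frac{t}{\pi}$ on $[0,\pi]$ cover it uniformly.
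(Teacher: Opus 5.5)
Your proof is correct. It uses the same mechanism as the paper's. The law of cosines at the base point writes $b^2-a^2$ as roughly $2|px|\,r$ times a difference of cosines of angles at $p$ (resp.\ $o$). That difference is controlled by the separation of $\xi_k,\xi_l$, which is comparable to $d/r$. Since $|px|$ is a small fraction of $r$, the result is dominated by $ad$. The paper's conclusion $\cos\tilde{\angle}q_lq_kx\gs\frac{\delta}{3}-\frac{4}{15}>-\frac12$ is exactly your inequality $b^2-a^2<ad+d^2$.

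The two routes differ in where the computation is done and in what is passed through the Gromov--Hausdorff approximation. The paper works in $B(p,2r)$ with the comparison triangles $\tilde{\Delta}q_kpx$, $\tilde{\Delta}q_lpx$, and imports comparison \emph{angles} at $p$ from the cone; see \eqref{2.2:100} and \eqref{2.2:56}. You compute exactly in $C(\Sigma)$ and import only the three \emph{distances} $a,b,d$, each with an additive error $O(\epsilon r)$.

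Your version is the more robust one, for two reasons:
\begin{itemize}
\item The comparison angle $\tilde{\angle}q_kpx$ is close to $\angle x_koy$ only up to an error of order $\epsilon r/|px|$. So the uniform bound $100\epsilon$ in \eqref{2.2:100} needs $|px|$ not too small. The paper's argument is repairable, because that angle difference is multiplied by $|px|$.
\item \eqref{ls:2.2} and \eqref{2.2:4} are written for pairs with $|\xi_k\xi_l|$ about $\delta$. Your inequalities $\cos\theta_k-\cos\theta_l\ls|\xi_k\xi_l|$ and $\sin\frac t2\gs\frac t\pi$ handle every pair and every $s\gs 0$ at once.
\end{itemize}
What the paper's route offers is mainly that it stays inside $B(p,2r)$ and produces an explicit numerical lower bound for the cosine.

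One small point: the paper invokes the lemma for $x\in B(p,\frac{r}{10})$, before $c$ is fixed, while you assume $x\in B(p,\frac{r}{4c})$. That is enough for the concavity conclusion of Theorem \ref{Theorem:concave}. Your Step 1 also works on all of $B(p,\frac{r}{10})$: there $\pi s\ls\frac{\pi(1/10+4\epsilon)}{9/10-4\epsilon}|yx_k|<\frac25|yx_k|$. Step 2 then still closes, since $\frac35\,ad\gs\frac{3\delta}{20}r^2\gg\epsilon r^2$.
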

\begin{proof}
	Consider the comparison triangles $\tilde{\Delta} q_k px$ and $\tilde{\Delta} q_l px$. Then
	\begin{equation}
		|pq_k|^2+|px|^2-2|px||pq_k|\cos \tilde{\angle} q_kpx=|q_kx|^2.
	\end{equation}
	\begin{equation}
		|pq_l|^2+|px|^2-2|px||pq_l|\cos \tilde{\angle} q_lpx=|q_lx|^2.
	\end{equation}
	By the above two inequalities, since $|px|\ls \frac{r}{10}$, we have
	\begin{equation}\label{2.2:up}
		\begin{array}{ll}
			|q_lx|^2-|q_kx|^2&=|pq_l|^2-|pq_k|^2+2|px||pq_k|\cos \tilde{\angle} q_kpx-2|px||pq_k|\cos \tilde{\angle} q_lpx\\
			&+2|px||pq_k|\cos \tilde{\angle} q_lpx-2|px||pq_l|\cos \tilde{\angle} q_lpx\\
			&\ls 4\epsilon r(2r+4\epsilon r)+2\frac{r}{10}(r+2\epsilon r) (\cos \tilde{\angle} q_kpx-\cos \tilde{\angle} q_lpx)+2\frac{r}{10}\cdot 4\epsilon r\\
			&\ls 8\epsilon(1+2\epsilon)r^2+\frac{4}{5}\epsilon r^2+\frac{1}{5}(1+2\epsilon)r^2(\cos \tilde{\angle} q_kpx-\cos \tilde{\angle}q_lpx).
		\end{array}
	\end{equation}
	Consider the comparison triangle $\tilde{\Delta} q_k q_lx$, by the cosine law,
	\begin{equation}\label{2.2:down}
		|q_lx|^2-|q_kx|^2=|q_kq_l|^2-2|q_kx||q_kq_l|\cos\tilde{\angle} q_lq_kx.
	\end{equation}
	By (\ref{2.2:assume}), (\ref{2.2:up}) and (\ref{2.2:down}),
	\begin{equation}\label{2.2:real}
		\cos \tilde{\angle} q_lq_kx \gs \frac{\delta}{3}-\frac{1}{5\delta}|\cos \tilde{\angle} q_kpx-\cos \tilde{\angle}q_lpx|.
	\end{equation}
	
	Recall that $q_k,q_l$ is $2\epsilon r$ close to $x_k=(r,\xi_k), x_l=(r,\xi_l) \in C(\Sigma)$. Suppose that $x$ is $2\epsilon r$ close to $y\in C(\Sigma)$. Then
	\begin{equation}
		||q_kp|-|x_ko||<4\epsilon r, \quad ||q_lp|-|x_lo||<4\epsilon r, \quad ||px|-|oy||<4\epsilon r.
	\end{equation}
	We have
	\begin{equation}\label{2.2:100}
		|\tilde{\angle}q_kpx-\angle x_koy|<100\epsilon, \quad |\tilde{\angle}q_lpx-\angle x_loy|<100\epsilon, \quad |\tilde{\angle}q_kpq_l-\angle x_kox_l|<100\epsilon.
	\end{equation}
	Consider the comparison triangle $\tilde{\Delta} q_k p q_l$, since
	\begin{equation}\label{ls:2.2}
		||q_kp|-r|<2\epsilon r, ||q_lp|-r|<2\epsilon r, ||q_kq_l|-2r\sin \frac{\delta}{2} |<2\epsilon r,
	\end{equation}
	we have
	\begin{equation}\label{2.2:4}
		\tilde{\angle} q_kpq_l <4\delta.
	\end{equation}
	By (\ref{2.2:4}), (\ref{2.2:100}) and (\ref{2.2:assume}), we have
	\begin{equation}\label{2.2:56}
		\begin{array}{ll}
			|\tilde{\angle}q_kpx-\tilde{\angle}q_lpx|
			&\ls |\angle x_koy-\angle x_loy|+200\epsilon\\
			&\ls \angle x_k ox_l+200\epsilon\\
			&<\frac{6}{5} \delta.
		\end{array}
	\end{equation}
	Then
	\begin{equation}\label{2.2:34}
		|\cos \tilde{\angle}q_kpx-\cos \tilde{\angle}q_lpx|\ls \frac{4}{3}\delta
	\end{equation}
	By (\ref{2.2:real}) and (\ref{2.2:34}), we have
	\begin{equation}
		\cos \tilde{\angle} q_lq_kx \gs \frac{\delta}{3}-\frac{4}{15}.
	\end{equation}
	Then $\tilde{\angle}q_lq_kx<\frac{2}{3}\pi$.
	
\end{proof}

\section{Curvature integral, volume ratio, n dimensional non-negative sectional curvature}

\begin{thm}\label{thm:technical}
For $n\ge 3$, there exists $C_1(n),C_2(n)>0$. Let $(M,g)$ be a complete, n dimensional Riemannian manifold with non-negative sectional curvature. For $p\in M$, if $p$ has an $(n,\delta)$ strainer $\{(a_i,b_i)\}_{i=1}^n$ with $|a_ip|=l,|b_ip|=l$. Then for any $r<C_1(n)\delta^{n-2}l$, we have 
\begin{equation}
r^{2-n}\int_{B(p,r)} Scal \ dvol \le C_2(n) \delta.
\end{equation}
\end{thm}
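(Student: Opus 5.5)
We argue by induction on $n$, following the outline given in the introduction. The base case $n=3$ is handled essentially as in Theorem \ref{thm:dim3}, with the distance function replaced by the strainer functions; alternatively, the base can be taken at $n=2$ using the Gauss--Bonnet argument of Theorem \ref{thm:dim2}, in which case the level sets are curves. For the inductive step, assume the statement holds in dimension $n-1$. We may assume $\delta$ is small, since otherwise the claim follows from Petrunin's bound \eqref{ls:PetSc} after rescaling. Also, $r^{2-n}\int_{B(p,r)}Scal$ is scale invariant, so we may normalize $r=1$; then $l\ge C_1(n)^{-1}\delta^{2-n}$, so the strainer points are very far away.

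\textbf{Step 1: strictly concave functions adapted to the strainer.} For each $i$, let $h_i=dist_{a_i}-l$. On $B(p,10)$ this function is almost linear: by the $(n,\delta)$ strainer condition and Toponogov comparison, the gradients $\nabla h_i$ form an almost orthonormal frame up to errors of order $\delta$. Its Hessian is bounded above by $1/(l-10)\ll \delta^{n-2}$. We add a small strictly concave term, for example $-\delta\, dist_p^2$ or a function obtained as in Theorem \ref{Theorem:concave}, so that the resulting function is $-c\delta$-concave. Then we apply Green--Wu smoothing (Proposition \ref{thm:smooth}) to obtain smooth, strictly concave functions $f_i$ on $B(p,10)$ whose gradients satisfy $|\nabla f_i|\ge 1-C\delta$ and $|\langle\nabla f_i,\nabla f_j\rangle|\le C\delta$ for $i\ne j$. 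Each level set $f_i^{-1}(t)$ is a convex hypersurface. With its intrinsic metric it has $sec\ge0$ by the Gauss equation, since its second fundamental form is nonnegative.

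\textbf{Step 2: pointwise comparison of curvatures.} Following Perelman's argument in \cite{Petpoly2003}, combine the Gauss equation with $sec_M\ge0$ and the almost orthonormality of the normals $\nabla f_i/|\nabla f_i|$. Summing the Gauss equations for the $n$ level sets through a point gives
$$Scal\le C(n)\sum_{i=1}^n Sc_i .$$

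\textbf{Step 3: strainers on the level sets.} On a level set $N=f_i^{-1}(t)$, at a point $q$, we need an $(n-1,C\delta)$ strainer of scale $\sim l$. We obtain it by projecting the pairs $(a_j,b_j)$ for $j\ne i$ onto $N$, i.e. taking nearest points or moving along gradient curves. The point to verify is that the comparison angles degrade only by $O(\delta)$. This uses that $N$ is almost a totally geodesic hyperplane at scale $\ll \delta l$, because its second fundamental form is at most $C\delta$ (coming from the added concave term) plus $1/l$. The intrinsic and extrinsic distances on $N$ then agree to relative error $O(\delta)$ on the relevant scale, because $N$ is convex with small curvature over a region of size $\delta l$. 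This step sets the scale restriction: each induction step loses a factor of $\delta$ in the admissible radius, which is the source of the exponent in $r<C_1\delta^{n-2}l$.

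\textbf{Step 4: integration.} By the induction hypothesis applied to $N$, on balls of radius $2$ in $N$ we get
$$\int_{B_N(q,2)}Sc_i\,d\cH^{n-1}\le C\delta .$$
Covering $N\cap B(p,1)$ by a bounded number of such balls and using the coarea formula with $|\nabla f_i|\ge 1/2$, we obtain
$$\int_{B(p,1)}Sc_i\le \int_{-2}^{2}\int_{f_i^{-1}(t)\cap B(p,1)}Sc_i\,d\cH^{n-1}\,dt\le C\delta .$$
Summing over $i$ and using Step 2 proves the theorem. The main obstacle is Step 3: the strainer on the level sets must remain an $(n-1,C\delta)$ strainer with linear rather than $\kappa(\delta)$ loss. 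The smoothing and the concavity correction must be chosen so that they perturb angles by at most $O(\delta)$.
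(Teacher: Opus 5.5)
Your architecture is the paper's: induction on dimension, Perelman's strictly concave correction (Theorem \ref{Theorem:concave}) added to $dist_{a_i}$, Green--Wu smoothing, the pointwise inequality $Scal\le C(n)\sum_i Sc_i$ of Appendix 2, and the coarea formula. The gap is in Step 3, which you correctly flag as the crux but justify with an estimate that goes the wrong way. Concavity of $f_i$ gives $\mathrm{II}_N=-\nabla^2 f_i|_{TN}/|\nabla f_i|\ge c\delta$, which is a \emph{lower} bound; this is what makes $N$ convex with $sec_N\ge 0$. An upper bound on $\mathrm{II}_N$ would need a lower bound on $\nabla^2 dist_{a_i}$ (and on $\nabla^2\phi$), and $sec\ge 0$ gives none, since Hessian comparison only yields $\nabla^2 dist_{a_i}\le g/dist_{a_i}$. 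Near a region where curvature concentrates, such as a smoothed cone tip with angle deficit $\sim\delta$, the level sets turn by an angle $\sim\delta$ over an arbitrarily short length. So $\mathrm{II}_N$ is unbounded pointwise and $N$ is not almost totally geodesic, and this is exactly the situation the theorem is about. The same sign issue rules out $-\delta\, dist_p^2$ as a correction: without an upper curvature bound it is semiconvex, not concave.

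What the paper uses instead is a comparison-geometry argument (Proposition \ref{prop:A_iB_i10000delta}).
From $q\in N=f_i^{-1}(t)$, go along ambient geodesics toward $a_j,b_j$ ($j\ne i$) to points $\bar a_j,\bar b_j$ at distance $2\delta l$.
Since $\tilde\angle a_i q\bar a_j$ is $O(\delta)$-close to $\pi/2$ and the correction $100\delta\phi$ is $O(\delta)$-Lipschitz, $|f_i(\bar a_j)-f_i(q)|=O(\delta^2 l)$.
Since $d f_i$ is $\approx 1$ toward $b_i$ and $\approx -1$ toward $a_i$, $f_i$ is open with a uniform constant. So $\bar a_j,\bar b_j$ can be moved onto $N$ by $O(\delta^2 l)$, which changes the angles at $q$ by only $O(\delta)$.
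This gives an $(n-1,C\delta)$ strainer on $N$ at scale $\delta l$. That, not a flatness region, is where the factor $\delta$ lost per induction step comes from.

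Your Step 1 is also set at the wrong scale. After normalizing $r=1$, the induction hypothesis on $B_N(q,2)$ needs strainer points on $N$ at distance $\gtrsim\delta^{3-n}$. So $f_i$ must be defined and strictly concave on $B(p,c\,\delta l)$, the scale where Theorem \ref{Theorem:concave} applies via the bi-Lipschitz chart, not on $B(p,10)$. A strainer on $N$ ``of scale $\sim l$'' obtained by projecting $a_j,b_j$ themselves is not available either, since $N$ exists only near $p$.
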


\textbf{Proof of Theorem \ref{thm:technical}}.
\begin{proof}

By \cite{Burago1992ad}, there exists constant $\tilde{C}(n)$ and $1\pm \tilde{C}(n) \delta$ bi-Lipschitz homeomorphism from $B(p,1000 \delta l)$ onto a domain in $R^n$. Then there exist constant $\tilde{C}_1(n), \tilde{C}_2(n)$, such that 
\begin{equation}
volB(p,200\delta l) \ge (1-\tilde{C}_1(n) \delta) volB(0,200\delta l)
\end{equation}
and for $r<200\delta l$, 
\begin{equation}
	d_{GH}(B(p,r),B(0,r))<10 \tilde{C_2}(n) r.
	\end{equation}
By theorem \ref{thm:dim2}, this theorem holds for dimension 2. Suppose that the theorem holds for dimension $k<n$, we will prove that it holds for dimension n.

By Theorem \ref{Theorem:concave}, there exists $c_2(n)$ and a 1-Lipschitz, $-\frac{2}{100\delta l}$-concave function $\phi$ defined on $B(p,\frac{100\delta l}{c_2(n)})$. Since distance functions $dist_{a_i}$ is $\frac{3}{2l}$-concave on $B(p,\frac{l}{4})$, then $dist_{a_i}+ 100\delta \phi$ is 
\begin{equation}
\frac{3}{2l}-100 \frac{2}{100\delta l}=-\frac{1}{2l}
\end{equation}
concave in $B(p,\frac{100\delta l}{c_2(n)})$
Since $\phi$ is 1-Lipschitz, then $dist_{a_i}+ 100\delta \phi$ is $1+100\delta$
Lipschitz. 
By proposition \ref{thm:smooth}, there exist smooth, $-\frac{1}{10 l}$ concave functions $f_i$ defined on $B(p,\frac{50 \delta l}{c_2(n)})$ such that for any $x\in B(p,\frac{50 \delta l}{c_2(n)})$, we have
\begin{equation}
|f_i(x)-(|a_ix|+100\delta \phi(x))|<\delta^{100} l
\end{equation}
and $f_i$ are $1+200 \delta$-Lipschitz.

Now for
\begin{equation}
r_0<\delta^{n-2}l
\end{equation}
Consider the level sets 
\begin{equation}
S_i(t):=f_i^{-1}(t)\cap B(p,r_0).
\end{equation}
Since $f_i$ are smooth, strictly concave functions, by Gauss formula,  $S_i(t)$ is empty or an n-1 dimensional Riemannian manifold with non-negative sectional curvature. Denote by $Sc(S_i(t))$ the Scalar curvature with respect to the intrinsic metric of $S_i(t)$. If $f_i(x)=t_i$, by the proof in Appendix 1, we have 
\begin{equation}\label{App:Scal <sum}
	Scal \le c_3(n) \sum_{i=1}^n Sc(S_i(t_i)).
\end{equation}
then
\begin{equation}\label{TThintScalle}
	\begin{array}{ll}
		\int_{B(p,r_0)} Scal \ dvol \le \sum\limits_{i=1}^n c_4(n) \int_{B(p,r_0)} Sc(S_i) \ dvol.
	\end{array}
\end{equation}
Let 
\begin{equation}\label{TThaibi}
	f_i(B(p,r_0))=(a_i,b_i), f_i(p)=t_i,
\end{equation}

By coarea formula,
\begin{equation}\label{TThcoarea}
	\int_{B(p,r_0)}Sc(S_i)=\int_{a_i}^{b_i} \int_{B(p,r_0)\cap f_i^{-1}(t)} \frac{Sc(S_i)}{|\nabla f_i|} dH^{n-1}dt
\end{equation}
By Proposition \ref{prop:A_iB_i10000delta}, we know that for any $x,y \in S_i$,  
\begin{equation}
	|xy|_M \le |xy|_{S_i}<(1+\kappa(\delta))|xy|_M.
\end{equation}
Then there exists a $q\in f_i^{-1}(t)$, such that 
\begin{equation}\label{TThsubsetBS}
	B(p,r_0) \cap f_i^{-1}(t)\subset B^{S_i}(q,2r_0))\subset f_i^{-1}(t),
\end{equation}
where $B^{S_i}$ means the balls in $S_i$.

By the following Proposition \ref{prop:A_iB_i10000delta}, q has an $(n-1, 10000\delta)$ strainer $(A_i,B_i)$ with $|A_ip|\ge \delta l, |B_ip|\ge \delta l $. 
Then by the induction hypothesis, there exists constant $C_1(n-1)$ and constant $c_5(n)$, for 
\begin{equation}
\begin{array}{ll}
	r_0<\frac{(10000\delta)^{n-1-2}\cdot \delta l}{C_1(n-1)}=\frac{10000^{n-2}}{C_1(n)}\delta^{n-2}l,
\end{array}
\end{equation}
we have 
\begin{equation}\label{TThinduction}
	\int_{B^{S_i}(q,2r_0)} Sc(S_i) \ dvol \le c_5(n)r_0^{n-3}\delta.
\end{equation}
Here we need 
\begin{equation}
\frac{10000^{n-2}}{C_1(n-1)}<\frac{1}{C_1(n)}
\end{equation}
By the following Proposition \ref{prop:A_iB_i10000delta},
\begin{equation}\label{TThnablaf_i>}
|\nabla f_i |>1-400 \delta.
\end{equation}
By \eqref{TThinduction},\eqref{TThsubsetBS} and \eqref{TThnablaf_i>},
\begin{equation}\label{TThintfrac}
	\begin{array}{ll}
		\int_{B(p,r_0)\cap f^{-1}(t)} \frac{Sc(S_i)}{|\nabla f_i|} dH^{n-1}&\le 2\int_{B(p,r_0)\cap f^{-1}(t)} Sc(S_i) dH^{n-1}\\
		&\le 2\int_{B^{S_i}(q,2r_0)} Sc(S_i) dH^{n-1}  \\
		&\le 2c_5(n)r_0^{n-3}\delta.
	\end{array}
\end{equation}
Since $f_i$ are $1+200\delta$-Lipschitz, by \eqref{TThaibi}, for any $i,1\le i \le n$, 
\begin{equation}\label{TThbi-ai}
	b_i-a_i<(1+c(n)\delta)r_0.
\end{equation}
By \eqref{TThbi-ai},\eqref{TThcoarea} and \eqref{TThintfrac}, we have
\begin{equation}\label{TThintScSi}
\begin{array}{ll}
	\int_{B(p,r_0)}Sc(S_i) dvol &\le 2(b_i-a_i)c_5(n)r_0^{n-3}\delta \\
	& \le c_6(n)r_0^{n-2} \delta.
\end{array}
\end{equation}
By \eqref{TThintScSi} and \eqref{TThintScalle}, we have
\begin{equation}
	\int_{B(p,r_0)}Sc(S_i) \le C_2(n)r_0^{n-2} \delta. 
	\end{equation}
	
As the corollary of the above theorem, we can prove the following theorem.
\begin{thm}
	There exists $C(n)>0$, let M be a complete, n dimensional non-compact Riemannian manifold with non-negative curvature, without boundary. Then for any $R>0$, we have 
	\begin{equation}
		R^{2-n}\int_{B(p,R)} Scal \le C(n)(1-v).
	\end{equation}
\end{thm}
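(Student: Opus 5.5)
Set $\delta:=1-v(M)$. The plan is to derive the statement from Theorem \ref{thm:technical} applied on the blown-down manifold, with Petrunin's bound (\ref{ls:PetSc}) covering the regime where $\delta$ is not small.

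\textbf{Step 1: large $\delta$.} Fix a small threshold $\delta_0(n)>0$. If $\delta\ge\delta_0$, rescale to $(M,R^{-1}d)$. By (\ref{introduction:limitmeasure<c(n)}) and \cite{LiNan2026}, $R^{2-n}\int_{B(p,R)}Scal\le C(n)\le (C(n)/\delta_0)\,\delta$. So from now on we assume $\delta<\delta_0$.

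\textbf{Step 2: a quantitative strainer at every scale.} If $\delta<\delta_0$, then $v(M)>0$, and the cone at infinity is $C(\Sigma)$ with $vol(\Sigma)/vol(S^{n-1})=1-\delta$ by (\ref{eq:infratio}). Theorem \ref{thm:1-delta-ndelta} then gives an $(n,2\pi\delta)$ strainer $\{(\xi_i,\eta_i)\}$ in $\Sigma$.

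Fix a large constant $L=L(n)$ with $L\gg \delta^{2-n}/C_1(n)$. In the cone, the points $(\xi_i,L)$, $(\eta_i,L)$ form an $(n,c\delta)$ strainer at the apex $o$ with arms of length $L$, since comparison angles at $o$ in a Euclidean cone equal distances in $\Sigma$.

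Now work in $(M,(sR)^{-1}d)$ with $s\to\infty$. This converges to $C(\Sigma)$, so for $s$ large we can lift these points to $a_i,b_i\in M$. The lifted points form an $(n,2c\delta)$ strainer at $p$ with $|a_ip|,|b_ip|\approx L$ in the rescaled metric, because comparison angles are continuous under Gromov--Hausdorff convergence. We adjust along the geodesics so the arm lengths are exactly $l=L$.

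The main subtlety is this step: the strainer lives at scale $sR$ with arms of length $LsR$, and we need the ball of radius $R$, i.e. rescaled radius $1/s$, to be covered. It would be cleaner to avoid $s$ altogether and straighten at scale $R$ directly. For this we use monotonicity: comparison angles at $p$ between rays decrease to $\angle_\infty$. Choose rays $\gamma_{\xi_i}$, $\gamma_{\eta_i}$ from $p$ representing $\xi_i,\eta_i$. Then for any $t>0$, the points $\gamma_{\xi_i}(t)$, $\gamma_{\eta_i}(t)$ satisfy
\[
\tilde\angle\,\gamma_{\xi_i}(t)\,p\,\gamma_{\eta_i}(t)\ \ge\ \angle_\infty=|\xi_i\eta_i|\ \ge\ \pi-2\pi\delta,
\]
and similarly the cross angles are $\ge \pi/2-20\pi\delta$. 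So $p$ has an $(n,c\delta)$ strainer with arms of any length $l=t$, which is exactly what Theorem \ref{thm:technical} needs.

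\textbf{Step 3: apply Theorem \ref{thm:technical}.} Given $R$, take $l=2R/(C_1(n)\delta^{n-2})$, so that $R<C_1(n)(c\delta)^{n-2}l$ after adjusting constants. Theorem \ref{thm:technical} then gives
\[
R^{2-n}\int_{B(p,R)}Scal\le C_2(n)\,c\,\delta,
\]
which is the claimed bound.

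\textbf{Expected obstacle.} The hard part is representing each ideal point $\xi_i$ by a ray from $p$. Rays from $p$ surject onto $M(\infty)$ by definition, so this is fine, but the ideal boundary is only a quotient of rays, and we need the chosen representatives to realize the distances with $\angle_\infty(\gamma_\xi,\gamma_\eta)=|\xi\eta|_\Sigma$. This holds by the construction of $M(\infty)$. Strict inequalities can be absorbed by using slightly smaller $\delta$ multiples.
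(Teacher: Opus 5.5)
Your proof is correct. Its skeleton matches the paper's: Petrunin's bound when $\delta\ge\delta_0$, a quantitative strainer in $\Sigma$, transfer of that strainer to $p$, then Theorem~\ref{thm:technical}. The difference is in the transfer step.

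The paper takes $\xi_i=F(e_i)$, $\eta_i=F(-e_i)$ for the $1\pm c(n)\delta$ bi-Lipschitz map $F\colon S^{n-1}(1)\to\Sigma$. It lifts $(\xi_i,1),(\eta_i,1)\in C(\Sigma)$ through the Gromov--Hausdorff approximation of $(M,R^{-1}d)$, which gives an $(n,2C\delta)$ strainer at $p$ with arms of length about $1$, valid only for $R$ large. It then applies Theorem~\ref{thm:technical} on the ball of radius $r=R_0/R$ and scales back. That is exactly your first, abandoned sketch. The ``subtlety'' you worried about there is harmless: the target radius $1/s$ tends to $0$ while the arm length stays fixed, so the radius hypothesis of Theorem~\ref{thm:technical} holds once $s$ is large.

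Your ray version instead uses the strainer of Theorem~\ref{thm:1-delta-ndelta} directly, together with Toponogov monotonicity $\tilde\angle\gamma_1(s)p\gamma_2(t)\ge\angle_\infty(\gamma_1,\gamma_2)$. Every strainer condition is a lower bound on a comparison angle. So points on representative rays form an $(n,2\pi\delta)$ strainer at $p$ for every arm length at once, with $|a_ip|=|b_ip|=l$ exactly and no loss in the constant. This removes the limiting argument and the two-scale rescaling $rR=R_0$, whereas the paper's lifted points sit only approximately at the prescribed distance.

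The cost is that you need the cross-section $\Sigma$ identified isometrically with the ray ideal boundary $(M(\infty),\angle_\infty)$, which is Gromov's $Con_\infty M=K(M(\infty))$ from the preliminaries. The paper needs only Gromov--Hausdorff convergence to the cone and continuity of comparison angles.

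Both arguments rest on Theorem~\ref{thm:technical}, which is stated for $n\ge 3$. For $n=2$, use Theorem~\ref{thm:dim2} at scale $2R$ together with $volB(p,2R)/volB(0,2R)\ge v(M)$.
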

\begin{proof}
	 
	By Petrunin's theorem, we just need to prove the case when $v$ is sufficiently close to 1. Let 
	\begin{equation}\label{delta1-v}
		\delta=1-v.
	\end{equation}
	As $R \to \infty$, $B(p,R)\subset (M, R^{-1}d)$ pointed Gromov-Hausdorff converge to the cone at infinity $B(o,1)\subset C(\Sigma)$, where $C(\Sigma)$ is the cone over an n-1 dimension Alexandrov space $\Sigma$ with curvature $\ge 1$ and 
	\begin{equation}
		\frac{vol(\Sigma)}{Vol(S^{n-1})}=v.
	\end{equation}
By theorem \ref{Thm:BiLipfromSigmatoSn-1}, there exist constant $c(n)>0$ and a $1\pm c(n)\delta$-bi-Lipschitz homeomorphism $F:S^{n-1}(1) \to \Sigma$ such that 
	\begin{equation}
		(1-c(n)\delta)|xy|\le |F(x)F(y)| \le (1+c(n)\delta)|xy|.
	\end{equation}
	For $S^{n-1}(1)\subset R^n$, let $v_i, w_i \in S^{n-1}(1)$ such that 
	\begin{equation}
		v_1=(1,0,...,0),...,v_n=(0,...,1), w_1=(-1,...,0),...,w_n=(0,...,-1).
	\end{equation}
	Let $\xi_i=F(v_i),\eta_i=F(w_i)\in \Sigma$, then 
	\begin{equation}
		a_i:=(\xi_i,1)\in C(\Sigma), b_i=(\eta_i,1)\in C(\Sigma)
	\end{equation}
	is an $(n,C\delta)$ strainer for the apex $o \in C(\Sigma)$ for some constant C.  Lift $(a_i,b_i)$ to $(M,R^{-1}d)$ to get $a_i^R, b_i^R$. When R is sufficiently large, $(a_i^R,b_i^R)$ is an $(n,2C\delta)$ strainer for $p\in (M,R^{-1}d)$. By the above theorem \ref{thm:technical}, for $r<c_2(n)\delta^{n-2}$, 
	\begin{equation}\label{Thm:corollary 1}
		r^{2-n}\int_{B^R(p,r)} Scal^R dvol^R \le c_3(n) \delta, 
	\end{equation}
	Where $B^R(p,r),Scal^R,vol^R$ are the r-ball, scalar curvature and volume under the metric $(M,R^{-1}d)$. Then
	Note that 
	\begin{equation}\label{Thm:corollary 2}
		r^{2-n}\int_{B^R(p,r)} Scal^R \ dvol^R=(rR)^{2-n}\int_{B(p,rR)} Scal \ dvol. 
	\end{equation}
	For any $R_0>0$, we can choose R sufficiently large and r sufficiently small, such that $R_0=rR$, then by \eqref{Thm:corollary 1} and \eqref{Thm:corollary 2},
	\begin{equation}
	\begin{array}{ll}
		R_0^{2-n}\int_{B(p,R_0)} Scal\ dvol &\le c_3(n) \delta\\
	&=c_3(n)(1-v).
	\end{array}
	\end{equation}
\end{proof}

The following Proposition and its proof are slight modifications of theorems in section 11 of \cite{Burago1992ad}, for reader's convenience, we include a proof below.
\begin{prop}\label{prop:A_iB_i10000delta}
There exists $C(n)>0$, such that for $1\le i \le n$, $q\in S_i:=f_i^{-1}(f_i(q))\cap B(p,r_0)$ has an $(n-1,10000\delta)$ strainer $\{A_j,B_j\}_{i=1}^{n-1}\subset S_i$ with $|A_jp|,|B_jp| = l$. 
\end{prop}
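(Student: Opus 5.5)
The plan is to build the strainer for $S_i$ from the ambient strainer $\{(a_j,b_j)\}_{j\neq i}$ of $p$. I will read ``$\subset S_i$'' as lying on the full level hypersurface $f_i^{-1}(f_i(q))$, continued by the level set of $|a_i\cdot|+100\delta\phi$ where $f_i$ is not defined. That is the only reading under which $|A_jp|=l$ is possible, since $S_i\subset B(p,r_0)$.

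\textbf{Step 1: control of the normal direction.} On $B(p,r_0)$ I will show that $\nabla f_i$ is within angle $O(\delta)$ of $-\Uparrow_x^{a_i}$, and hence within $O(\delta)$ of $\Uparrow_x^{b_i}$.
\begin{itemize}
\item The perturbation $100\delta\phi$ is $100\delta$-Lipschitz.
\item The smoothing error is $\delta^{100}l$.
\item Since $\tilde\angle a_ipb_i>\pi-\delta$ and $r_0\ll \delta l$, comparison angles measured at any $x\in B(p,r_0)$ differ from those at $p$ by $O(r_0/l)\le \delta$.
\end{itemize}
The same estimate gives $|\nabla f_i|>1-400\delta$. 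Together with $\tilde\angle a_ipa_j\approx\pi/2$, this makes the directions $\Uparrow_x^{a_j},\Uparrow_x^{b_j}$ ($j\neq i$) almost tangent to $S_i$, up to an angle $O(\delta)$.

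\textbf{Step 2: choosing $A_j,B_j$ by continuity.} For each $j\neq i$, I take the set of points $z$ on the extended level hypersurface through $q$ with $|pz|=l$. By the intermediate value theorem along an arc that starts near $p$ and goes out in the direction of $a_j$, this set is nonempty, and I choose $A_j$ in it closest to $a_j$; $B_j$ is chosen the same way from $b_j$. Step 1, together with Toponogov comparison applied to the triangles $a_i a_j A_j$, shows $|A_ja_j|,|B_jb_j|\le C\delta l$. Because the distance to $p$ is fixed exactly at $l$ by construction, the normalization $|A_jp|=|B_jp|=l$ holds.

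\textbf{Step 3: strainer inequalities in $S_i$.}
\begin{itemize}
\item Ambient step: moving a strainer point by $C\delta l$ changes comparison angles at $q$ by $O(\delta)$, so $\{(A_j,B_j)\}_{j\neq i}$ is an $(n-1,C'\delta)$ strainer for $q$ in $M$.
\item Passage to the intrinsic metric: I will use that the level set of a concave function is totally convex in the appropriate sense. Its intrinsic distances dominate the extrinsic ones, and they exceed them by at most a factor $1+O(\delta)$ on the relevant scale, because of the normal control from Step 1 and Bishop-type comparison in $\Sigma_x$.
\end{itemize}
The intrinsic comparison angles at $q$ then lose at most another $O(\delta)$ against the ambient ones, and tracking the constants should yield $10000\delta$ for $n$ fixed. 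The main obstacle is this second bullet, which needs a quantitative version of the distance comparison between a convex level set and $M$ at distance scale $l$, outside $B(p,r_0)$. If a clean global version fails, I would first try working only with comparison angles at $q$ computed from directions in $\Sigma_q S_i$, projecting $\Uparrow_q^{A_j}$ orthogonally to $\nabla f_i$. This would reduce everything to the near-orthogonality estimates of Step 1.
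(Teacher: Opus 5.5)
\textbf{There is a genuine gap in Steps 2 and 3.} It comes from taking the normalization $|A_jp|=|B_jp|=l$ literally. Under that reading the assertion is false already in the flat model.

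Take $M=\mathbb{R}^n$, $q=p=0$, $a_i=-le_i$, $b_i=le_i$, and ignore the $O(\delta)$ perturbation; it changes nothing below by more than $O(\delta)$. The level set of $|a_i\,\cdot|$ through $p$ is the sphere $|x+le_i|=l$. Its points with $|x|=l$ are exactly those with $x_i=-l/2$. For any two such points $A,B$, write $A',B'$ for their components orthogonal to $e_i$; each has length $\frac{\sqrt3}{2}l$. Then $\cos\angle ApB=\frac14+\langle A',B'\rangle/l^2\ge-\frac12$, so $\angle ApB\le 2\pi/3$.

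Consequences for your argument:
\begin{enumerate}
\item No admissible $A_j,B_j$ satisfy even the ambient inequality $\tilde\angle A_jqB_j>\pi-C\delta$.
\item The estimate $|A_ja_j|\le C\delta l$ in Step 2 fails: the nearest admissible point is at distance about $0.52\,l$ from $a_j$.
\item The Step 3 obstacle you flag is the same phenomenon. At scale $l$ the level set is bent by a fixed amount (here a round sphere of radius $l$), so intrinsic and extrinsic geometry differ by an absolute constant, not by a factor $1+O(\delta)$.
\item Neither $f_i$ nor $\phi$ is defined at distance $l$ from $p$ ($\phi$ lives on $B(p,\frac{100\delta l}{c_2(n)})$), so your ``extended'' hypersurface carries no concavity.
\item Projecting directions at $q$ orthogonally to $\nabla f_i$ does not rescue this. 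A strainer is a configuration of points with comparison angles, and its size is exactly what the induction in Theorem \ref{thm:technical} consumes.
\end{enumerate}

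\textbf{The missing idea is to build the strainer at scale $\delta l$,} where the level set is flat to relative accuracy $O(\delta)$. This is what the paper does. It also matches how the proposition is used later, with $|A_ip|,|B_ip|\ge\delta l$. So the ``$=l$'' in the statement should be read that way, with $A_j,B_j$ on the full level set $f_i^{-1}(f_i(q))$.

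The paper's construction runs as follows.
\begin{enumerate}
\item Take $\bar a_j,\bar b_j$ on geodesics from $q$ to $a_j,b_j$ with $|q\bar a_j|=|q\bar b_j|=2\delta l$.
\item The angle $\tilde\angle a_iq\bar a_j$ is within $100\delta$ of $\pi/2$, the perturbation $100\delta\phi$ is $100\delta$-Lipschitz, and the smoothing error is $\delta^{100}l$. Hence $|f_i(\bar a_j)-f_i(q)|\le 400\delta^2 l$.
\item Your Step 1 bounds (in the paper, $d_xf_i(\uparrow_x^{\bar b_i})>1-400\delta$ and $d_xf_i(\uparrow_x^{\bar a_i})<-1+500\delta$) make $f_i$ $\frac13$-open by \cite{Burago1992ad}. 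So there is $A_j\in f_i^{-1}(f_i(q))$ with $|\bar a_jA_j|<1500\delta^2 l$, and likewise $B_j$.
\item This displacement is $O(\delta)$ relative to $|q\bar a_j|$, so the angles at $q$ move by less than $2000\delta$. Thus $\{(A_j,B_j)\}_{j\ne i}$ is an $(n-1,10000\delta)$ strainer at $q$.
\end{enumerate}
The ratio $\delta^2l/\delta l=\delta$ is what makes projecting onto the level set cheap, and it has no analogue at scale $l$.
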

\begin{proof}
W.L.O.G, we can assume that $q\in S_1$. For any $i\neq 1$, consider a geodesic $c_i(t)$ connecting q and $a_i$, geodesic $c_2(t)$ connecting q and $b_i$. Choose points $\bar{a}_i$ on $c_1$, $\bar{b}_i$ on $c_2$ such that 
\begin{equation}
|q\bar{a}_i|=2\delta l, |q \bar{b}_i|=2\delta l.
\end{equation}
Then since 
\begin{equation}
|\tilde{\angle} a_1 q \bar{a}_i-\frac{\pi}{2}|<100\delta.
\end{equation}
\begin{equation}
\begin{array}{ll}
|f_1(\bar{a}_i)-f_1(q)|&\le | |a_1 \bar{a}_i|-|a_1q|+100\delta(\phi(\bar{a}_i)-\phi(q))|+2\delta^{100} l\\
&\le -\cos \tilde{\angle} a_1q \bar{a}_i |q \bar{a}_i|+100\delta|q \bar{a}_i|+2\delta^{100}l\\
&\le 300 \delta |q \bar{a}_i|+2\delta^{100}l\\
&\le 400 \delta^2 l.
\end{array}
\end{equation}
Next we prove that $f_i$ is open.
We have
\begin{equation}
\begin{array}{ll}
	d_x f_1(\uparrow_x^{\bar{b}_1}) &\ge \frac{f_1(\bar{b}_1)-f_1(x)}{|\bar{b}_1x|}\\ 
	&\ge \frac{|a_1\bar{b}_1|-|a_1x|-100\delta|x\bar{b}_1|-2\delta^{100}l}{|x \bar{b}_1|}\\
	&>-\cos \tilde{\angle} a_1 x \bar{b_1}-200\delta\\
	&>1-400\delta.
\end{array}
\end{equation}
\begin{equation}
\begin{array}{ll}
d_x f_1(\uparrow_x^{\bar{a}_i}) &\ge \frac{f_1(\bar{a}_i)-f_1(x)}{|\bar{a}_ix|}\\ 
&\ge \frac{|a_1\bar{a}_i|-|a_1x|-100\delta|x\bar{a}_i|-2\delta^{100}l}{|x \bar{a}_i|}\\
&>-400\delta.
\end{array}
\end{equation}
Since 
\begin{equation}
\angle \bar{a}_1 x \bar{b}_1>\pi-10\delta,
\end{equation}
then 
\begin{equation}
|\uparrow_x^{\bar{a}_1}+\uparrow_x^{\bar{b}_1}|<20\delta.
\end{equation}
If follows that
\begin{equation}
\begin{array}{ll}
d_xf_1(\uparrow_x^{\bar{b}_1})+d_{p_1}f_1(\uparrow_x^{\bar{a}_1})&=\langle \nabla f_1(x), \uparrow_x^{\bar{b}_1}+\uparrow_x^{\bar{a}_1}\rangle \le |\nabla f_1(x)| \cdot |\uparrow_x^{\bar{a}_1}+\uparrow_x^{\bar{b}_1}|\\
&<100\delta.
\end{array}
\end{equation}
Then 
\begin{equation}
\begin{array}{ll}
d_x f_1(\uparrow_x^{\bar{a}_1})&\le -d_{p_1}f_1(\uparrow_x^{\bar{b}_1})+100\delta\\
&<-1+500\delta.
\end{array}
\end{equation}
In the same way, we can prove that 
\begin{equation}
\begin{array}{ll}
d_x f_1(\uparrow_x^{\bar{b}_i})&\ge-d_{p_1}f_1(\uparrow_x^{\bar{a}_i})+100\delta\\
&<500 \delta.
\end{array}
\end{equation}

By \cite{Burago1992ad}, $f_1$ is $\frac13$ open. Then, there exists $A_i, B_i \in f_1^{-1}(f_1(q))$, such that 
\begin{equation}
|\bar{a}_i A_i|<3|f_1(A_i)f_1(q)|<1500\delta^2l
\end{equation}
and 
\begin{equation}
|\bar{b}_iB_i|<3|f_1(B_i)f_1(q)|<1500\delta^2 l.
\end{equation}
\begin{equation}
\angle \bar{a}_i q A_i<2000\delta\ \angle \bar{b}_i q B_i<2000\delta.
\end{equation}
Then $\{(A_i,B_i)\}_{i=2}^n$ form an $(n-1,10000\delta)$ strainer at $q\in S_1$.
\end{proof}

\end{proof}

\section{Curvature integral, volume ratio, cone over $\Sigma$ }
We first show a simple property, since we have not found a reference, we give a proof for the reader's convenience.
\begin{prop}\label{Propsition:C(Sigma)}
Let $\Sigma$ be an n-1 dimensional Riemannian manifold with sectional curvature $\ge 1$. Let $C(\Sigma)$ be the cone over $\Sigma$. Then for any $R>0$,
\begin{equation}\label{Simplemain}
	\begin{array}{ll}
R^{2-n}\int_{B(p,R)\backslash p} Scal \ dvol=\frac{1}{n-2}[\int_{\Sigma} (Scal_{\Sigma}-(n-1)(n-2)) \ dvol],
	\end{array}
\end{equation}
Where $Scal_{\Sigma}$ is the scalar curvature of $\Sigma$. In particular, for $n=3$, 
\begin{equation}
R^{-1}\int_{B(p,R)\backslash p} Scal \ dvol=8\pi(1-v),
\end{equation}
where 
\begin{equation}
v=\frac{vol(\Sigma)}{vol(S^2)}.
\end{equation}
\end{prop}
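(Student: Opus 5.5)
The plan is a direct computation in polar coordinates on the cone. Write $C(\Sigma)\setminus\{p\}=(0,\infty)\times\Sigma$ with the warped product metric $g=dr^2+r^2g_\Sigma$. This is a smooth Riemannian metric away from the apex, since $\Sigma$ is a smooth Riemannian manifold. The proof has three steps: compute $Scal$ pointwise, integrate with the cone volume form, then specialize to $n=3$.

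\textbf{Step 1 (pointwise curvature).} Use the standard curvature formulas for a warped product $dr^2+f(r)^2g_\Sigma$ with $f(r)=r$.
\begin{itemize}
\item Since $f''=0$, the radial planes, i.e.\ those containing $\partial_r$, have sectional curvature $-f''/f=0$. The mixed terms vanish too, so $Ric(\partial_r,\cdot)=0$.
\item For a plane tangent to the slice $\{r\}\times\Sigma$ and spanned by $g$-orthonormal vectors, the sectional curvature is $\bigl(sec_\Sigma-(f')^2\bigr)/f^2=(sec_\Sigma-1)/r^2$. Here $sec_\Sigma$ is the curvature of the corresponding $g_\Sigma$-plane.
\end{itemize}
Take an orthonormal frame $\partial_r,e_1,\dots,e_{n-1}$ with $e_i$ tangent to the slice. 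Summing over ordered pairs $i\neq j$ among the $e_i$ gives
\begin{equation*}
Scal(r,\theta)=\frac{Scal_\Sigma(\theta)-(n-1)(n-2)}{r^2},
\end{equation*}
because the $(n-1)(n-2)$ ordered tangential pairs each contribute $-1/r^2$. Alternatively, one can check this from the Gauss equation for the slices, which are umbilic with principal curvatures $1/r$, together with $Ric(\partial_r,\partial_r)=0$. This step is the only place needing care, and it is classical.

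\textbf{Step 2 (integration).} The volume form is $dvol=r^{n-1}\,dr\,dvol_\Sigma$. By Fubini, on $B(p,R)\setminus p=(0,R)\times\Sigma$,
\begin{equation*}
\int_{B(p,R)\setminus p}Scal\,dvol=\int_0^R r^{n-3}\,dr\int_\Sigma\bigl(Scal_\Sigma-(n-1)(n-2)\bigr)\,dvol_\Sigma .
\end{equation*}
The $r$-integral equals $\frac{R^{n-2}}{n-2}$, which is finite since $n\ge 3$. The $\Sigma$-integral is finite because $\Sigma$ is compact by Myers' theorem. Multiplying by $R^{2-n}$ gives \eqref{Simplemain}.

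\textbf{Step 3 ($n=3$).} Here $\Sigma$ is a surface with $K\ge1$, so $Scal_\Sigma=2K$, and the right-hand side of \eqref{Simplemain} becomes $\int_\Sigma 2K\,dvol-2\,vol(\Sigma)$. By Gauss--Bonnet, $\int_\Sigma 2K=4\pi\chi(\Sigma)=8\pi$ when $\Sigma\cong S^2$. Also $2\,vol(\Sigma)=8\pi v$, since $vol(S^2)=4\pi$. Hence the right-hand side equals $8\pi(1-v)$.

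The one point to flag is that this uses $\chi(\Sigma)=2$. A compact surface with $K\ge1$ is $S^2$ or $\mathbb{RP}^2$. In the orientable case, or when $\Sigma$ is the cross section of a cone that is a manifold near the apex, $\Sigma\cong S^2$. I would state this assumption explicitly. In the $\mathbb{RP}^2$ case the same computation gives $4\pi-8\pi v$ instead.
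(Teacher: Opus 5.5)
Your proof is correct and is essentially the paper's computation. The paper uses the same Gauss-equation splitting $Scal=Sc-G+2Ric(\nabla r,\nabla r)$ for the umbilic slices, with $H=\frac{n-1}{r}$, $G=\frac{(n-1)(n-2)}{r^2}$ and $Sc=r^{-2}Scal_\Sigma$. The only difference is that the paper handles the radial Ricci term through the integrated Bochner identity on annuli: the boundary mean-curvature terms cancel against $\int G$, and then it lets $r_1\to 0$. You instead read off $Ric(\partial_r,\partial_r)=0$ pointwise from $f''=0$, which is cleaner.

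Your $n=3$ caveat is also correct, and it exposes a real omission in the paper, which simply asserts $\int_\Sigma Scal_\Sigma=8\pi$ by Gauss--Bonnet. Take the round $\mathbb{RP}^2$, where $v=\frac12$. Its cone is $\mathbb{R}^3/\{\pm 1\}$, which is flat off the apex, so the left side is $0$ while $8\pi(1-v)=4\pi$. So the special case genuinely needs $\Sigma\cong S^2$; in general it reads $4\pi\chi(\Sigma)-8\pi v$.
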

\begin{proof}
Since $\Sigma$ is an n-1 dimensional Riemannian manifold with sectional curvature $\ge 1$, then $C(\Sigma)$ is an n-1 dimensional Alexandrov space with non-negative sectional curvature. Let p be the apex of $C(\Sigma)$, then $C(\Sigma)\backslash p$ is a smooth Riemannian manifold.

For any small $a>0$, the distance function $r=dist_p$ is smooth on $C(\Sigma)\backslash p$. For any $r>0$, the level set 
\begin{equation}
S(p,r)=\{x\in C(\Sigma):|px|=r\}
\end{equation}
is just $\Sigma$ with the Riemannian metric $r^2 g$, and the metric $r^{-1}d$. 

Denote by $Sc$ the Scalar curvature of $S(p,r)$ and
\begin{equation}\label{SimpleG}
G=2\sum_{i<j}k_ik_j
\end{equation}
the external term in the Gauss formula for the Scalar curvature of $S(p,r)$. Let $H$ be the mean curvature of $S(p,r)\subset C(\Sigma)$. 

Let $0<r_1<r_2$, denote by the Annual
\begin{equation}
A(r_1,r_2):=\{x\in C(\Sigma): r_1<|px|<r_2\}
\end{equation}
by Bochner formula, we have 
\begin{equation}
\int_{A(r_1,r_2)} Ric(\nabla r, \nabla r)=\int_{A(r_1,r_2)} G+\int_{S(p,r_1)} H-\int_{S(p,r_2)} H.
\end{equation}
Since 
\begin{equation}
Scal=Sc-G+2Ric(\nabla r,\nabla r),
\end{equation}
then 
\begin{equation}\label{SimpleintScal=...}
\begin{array}{ll}
\int_{A(r_1,r_2)} Scal&=\int_{A(r_1,r_2)} Sc+2\int_{A(r_1,r_2)} Ric(\nabla r,\nabla r)-\int_{A(r_1,r_2)} G\\
&=\int_{A(r_1,r_2)} Sc+\int_{A(r_1,r_2)} G+2\int_{S(p,r_1)} H-2\int_{S(p,r_2)}H.
\end{array}
\end{equation}
For $w\in T_x C(\Sigma)$ with $w \perp \nabla r$ and $|w|=1$. 
\begin{equation}\label{ThmSimple}
Hess r(w,w)=\frac{1}{r},
\end{equation}
By \eqref{SimpleG} and \eqref{ThmSimple}, for $x\in S(p,r)$, 
\begin{equation}\label{SimpleHG}
H=\frac{n-1}{r}, G=\frac{(n-1)(n-2)}{r^2}.
\end{equation}
Since 
\begin{equation}\label{SimplevolS}
	vol(S(p,r))=r^{n-1} vol(\Sigma),
\end{equation}
Then by \eqref{SimpleHG} and \eqref{SimplevolS},
\begin{equation}\label{SimpleintH}
\int_{S(p,r)} H=(n-1)r^{n-2}vol(\Sigma).
\end{equation}

By coarea formula, \eqref{SimplevolS} and \eqref{SimpleG},
\begin{equation}\label{SimpleintG}
\begin{array}{ll}
\int_{A(r_1,r_2)} G&=\int_{r_1}^{r_2} \int_{S(p,r)} G\\
&= \int_{r_1}^{r_2}r^{n-1} vol(\Sigma) \frac{(n-1)(n-2)}{r^2}\\
&=(n-1)vol(\Sigma)(r_2^{n-2}-r_1^{n-1}).
\end{array}
\end{equation}
Let $Scal_{\Sigma}$ be the Scalar curvature of $\Sigma$, then the Scalar curvature of $S(p,r)$ satisfies 
\begin{equation}\label{SimpleSc=r-n-1}
Sc=r^{-(n-1)} Scal_{\Sigma}.
\end{equation}
By coarea formula, \eqref{SimplevolS} and \eqref{SimpleSc=r-n-1},
\begin{equation}\label{SimpleintSc}
\begin{array}{ll}
\int_{A(r_1,r_2)} Sc&=\int_{r_1}^{r_2} \int_{S(p,r)} Sc\\
&=\int_{r_1}^{r_2} r^{n-1} r^{-2}\int_{\Sigma} Scal_{\Sigma}dvol dt\\
&=\frac{1}{n-2}(r_2^{n-2}-r_1^{n-2})\int_{\Sigma} Scal_{\Sigma}.
\end{array}
\end{equation}
By \eqref{SimpleintScal=...}, \eqref{SimpleintG}, \eqref{SimpleintH} and \eqref{SimpleintSc}, we have 
\begin{equation}
\begin{array}{ll}
\int_{A(r_1,r_2)} Scal&=\frac{1}{n-2}(r_2^{n-2}-r_1^{n-2})\int_{\Sigma} Scal_{\Sigma}+(n-1)(r_2^{n-2}-r_1^{n-2})vol(\Sigma)\\
&+2(n-1)r_1^{n-2}vol(\Sigma)-2(n-1)r_2^{n-1}vol(\Sigma)\\
&=\frac{1}{n-2}[\int_{\Sigma} Scal_{\Sigma}\ dvol -(n-1)(r_2^{n-2}-r_1^{n-2})vol(\Sigma).
\end{array}
\end{equation}
Let $r_1\to 0$ and $r=r_2$, we have 
\begin{equation}\label{Simplemain}
\begin{array}{ll}
\int_{B(p,R)\backslash p} Scal \ dvol &=\frac{1}{n-2}R^{n-2} \int_{\Sigma} Scal_{\Sigma} \ dvol-(n-1) R^{n-2} vol(\Sigma)\\
&=\frac{R^{n-2}}{n-2}[\int_{\Sigma} (Scal_{\Sigma}-(n-1)(n-2)) \ dvol]
\end{array}
\end{equation}
For $n=3$, then the dimension of $\Sigma$ is 2, by Gauss-Bonnet theorem,
\begin{equation}\label{SimpleGB}
\int_{\Sigma} Scal_{\Sigma} \ dvol=2\times 4\pi=8\pi.
\end{equation}
Notice that here $Scal_{\Sigma}$ is twice as large as the Gaussian curvature.

By \eqref{SimpleGB} and \eqref{Simplemain},
\begin{equation}
\begin{array}{ll}
R^{-1}\int_{B(p,R)\backslash p} Scal&=2(4\pi-vol(\Sigma))\\
&=2(vol(S^2)-vol(\Sigma))\\
&=8\pi(1-v),
\end{array}
\end{equation}
here 
\begin{equation}
v=\frac{vol(\Sigma)}{vol(S^2)}
\end{equation}
is the volume ratio.

\end{proof}

\begin{thm}
For $n\ge 3$, there exists a constant $C(n)>0$. Let $\Sigma$ be a $n-1$ dimension Riemannian manifold with sectional curvature $\ge 1$. Let $C(\Sigma)$ be the cone over $\Sigma$. Let $p$ be the apex of $C(\Sigma)$ and 
\begin{equation}
v=\frac{vol(\Sigma)}{vol(S^{n-1})}=\lim_{R\to \infty} \frac{volB(p,R)}{volB(0,R)}
\end{equation}
be the volume ratio.
Then for any $R>0$, 
\begin{equation}
R^{2-n} \int_{B(p,R)\backslash p} Scal \ dvol \le C(n)(1-v).
\end{equation}
\end{thm}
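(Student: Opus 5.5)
By the scaling invariance of the cone, the proof is independent of $R$. Concretely, Proposition \ref{Propsition:C(Sigma)} shows that $R^{2-n}\int_{B(p,R)\backslash p} Scal\,dvol$ equals the constant $\frac{1}{n-2}\int_\Sigma (Scal_\Sigma-(n-1)(n-2))\,dvol$. So it suffices to bound this single quantity by $C(n)(1-v)$.

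We split into two cases according to the size of $\delta:=1-v$. If $\delta\ge\epsilon_0(n)$ for a fixed small $\epsilon_0$, then we use the Petrunin-type bound from \cite{LiNan2026}, which also covers spaces with singularities. Since $C(\Sigma)$ is an Alexandrov space with curvature $\ge 0$, the measure $Scal\cdot dvol$ on $B(p,1)\backslash p$ is bounded by $C(n)\le (C(n)/\epsilon_0)\,\delta$. Alternatively, we bound it directly: $Scal_\Sigma\ge (n-1)(n-2)$ pointwise, and $\int_\Sigma Scal_\Sigma$ is bounded by \cite{Pet2009upper} applied to $\Sigma$. Either way this case is routine.

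The main case is $\delta$ small. The plan is to run the argument of Theorem \ref{thm:technical} on the cone, but \emph{away from the apex}, and then use homogeneity. First, Theorem \ref{thm:1-delta-ndelta} and Theorem \ref{Thm:BiLipfromSigmatoSn-1} give an $(n,C\delta)$ strainer $\{(\xi_i,\eta_i)\}$ of $\Sigma$ together with a $1\pm C(n)\delta$ bi-Lipschitz map $\Sigma\to S^{n-1}$. Then $a_i=(\xi_i,L)$, $b_i=(\eta_i,L)$ with $L$ large form an $(n,C\delta)$ strainer at every point $x$ of the annulus $\{1/2\le |px|\le 1\}$, with $|a_ix|,|b_ix|\ge L/2$. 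Here I expect to need the cone cosine law and the fact that $\Sigma$ is $C\delta$-close to the round sphere; an alternative is to strain by the rays $(t,\xi_i)$, $t\to\infty$. Near such $x$ the cone is a smooth Riemannian manifold with $\sec\ge0$. Theorem \ref{thm:technical} is stated for complete manifolds, but its proof only uses a neighbourhood $B(x,\delta l)$, Toponogov comparison for triangles with vertices $a_i,b_i,x$, and smoothing on that neighbourhood. The apex is the only non-manifold point, and it lies at distance $\ge 1/2$ from $x$. Moreover $C(\Sigma)$ is a globally Alexandrov space of curvature $\ge0$, so Toponogov comparison holds, and the Green--Wu smoothing only needs to be done locally on $B(x,\delta l)\subset C(\Sigma)\backslash p$. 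Hence the local theorem applies and gives
\begin{equation*}
\int_{B(x,r)} Scal\,dvol\le C_2(n)\,\delta\, r^{n-2},\qquad r=C_1(n)\delta^{n-2}.
\end{equation*}

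We now cover the annulus $\{1/2\le|px|\le1\}$ by balls $B(x_k,r)$ with bounded overlap. Their number is about $r^{-n}$, which gives
\begin{equation*}
\int_{\{1/2\le|px|\le1\}} Scal\le C\,\delta\, r^{-2}=C\,\delta^{1-2(n-2)}.
\end{equation*}
This loses powers of $\delta$, and it is the main obstacle. To avoid the loss, I would instead apply the induction argument of Theorem \ref{thm:technical} with balls of radius comparable to $1$ directly. Because the cone metric is homogeneous, the strainers $(a_i,b_i)$ can be taken at distance $L\to\infty$. Then the condition $r<C_1\delta^{n-2}L$ is harmless, and balls of fixed radius (say $1/4$) around points of the annulus are admissible. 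This yields $\int_{\{1/2\le|px|\le1\}}Scal\le C(n)\delta$ with a dimensional covering number.

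Finally, by scaling, the annulus $\{2^{-k-1}\le |px|\le 2^{-k}\}$ contributes $2^{-k(n-2)}$ times the same amount. Summing over $k\ge0$ (using $n\ge3$) gives $\int_{B(p,1)\backslash p}Scal\le C(n)\delta$, which by homogeneity gives the claimed bound for every $R$. The delicate point to verify is that the concave-function construction of Theorem \ref{Theorem:concave} and the level-set strainers of Proposition \ref{prop:A_iB_i10000delta} stay inside $C(\Sigma)\backslash p$ and are uniform when $L\to\infty$.
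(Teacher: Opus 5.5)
Your preliminary reductions are fine. Homogeneity makes $R^{2-n}\int_{B(p,R)\setminus p}Scal$ independent of $R$, the case $\delta\ge\epsilon_0$ follows from Petrunin's bound on $\Sigma$, and the dyadic summation reduces everything to showing $\int_{\{1/2\le|px|\le1\}}Scal\le C(n)\delta$.

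That annulus estimate is exactly what is not proved, and the step you postpone as ``the delicate point'' fails as stated. The restriction $r<C_1(n)\delta^{n-2}l$ in Theorem \ref{thm:technical} is not cosmetic. It comes from the scales the proof works at:
\begin{itemize}
\item the bi-Lipschitz chart on $B(x,1000\delta l)$;
\item the concave function of Theorem \ref{Theorem:concave} on $B(x,100\delta l/c_2(n))$;
\item the level-set strainers of Proposition \ref{prop:A_iB_i10000delta} at distance $\sim\delta l$;
\item the $(n-1)$-dimensional hypothesis applied to level sets at the next scale.
\end{itemize}
Making $r=1/4$ admissible with $l\sim L$ forces $\delta^{n-2}L\gtrsim1$. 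All these neighbourhoods then have radius $\gtrsim\delta L\gtrsim\delta^{-(n-3)}$ and contain the apex. Your justification that $p$ is at distance $\ge1/2$ from $x$ only helps when $\delta L<1/2$, and that gives back the lossy radius $r\sim\delta^{n-3}$.

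The obstruction is concrete. If $\angle(\xi_i,\uparrow_p^x)\approx\pi/2$, the level set of $f_i$ through $x$ comes within distance $O(\delta)$ of $p$. As $L\to\infty$ it degenerates toward the cone over $\{\sigma\in\Sigma:|\xi_i\sigma|=\pi/2\}$, which is singular at $p$. On such level sets neither Green--Wu smoothing nor the induction hypothesis (stated for smooth complete manifolds) is available at the scales the argument needs. So the main case is left open exactly where the apex matters, and that is the whole difficulty of this theorem compared with Theorem \ref{thm:technical}.

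The paper handles these near-equatorial points explicitly:
\begin{itemize}
\item It places the strainers at the apex itself; any $l$ is allowed by homogeneity.
\item It works in $B(p,r_0)$ with $r_0<\delta^{n-2}l/C_1(n)$, perturbs by a concave function at scale $r_0$ with coefficient $c_3(n)\delta^{n-2}$, and smooths only away from $p$.
\item It runs the level-set argument only on $A_i=\{f_i<-a\}$ and $B_i=\{g_i<-a\}$, whose level sets stay away from $p$.
\item It discards the slabs $D_i$, whose directions lie in $\{v:\,||\xi_iv|-\pi/2|<2c_6(n)\delta\}$; these are precisely your bad points.
\item Theorem \ref{Theorem:nsets} with Lemma \ref{lemSn-1} chooses $n$ strainer frames whose good regions together cover $B(p,r_0)\setminus B(p,\epsilon)$, with $\epsilon=c_3(n)\delta^{n-3}r_0$.
\item The leftover ball $B(p,\epsilon)$ is controlled by homogeneity and Petrunin's crude bound.
\end{itemize}
Your proposal needs some mechanism of this kind for the points whose level sets pass near the apex.
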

\begin{proof}
By Proposition \ref{Propsition:C(Sigma)}, this theorem holds for $n=3$. So we assume $n\ge 4$. 
For
\begin{equation}
r_0 < \frac{\delta^{n-2} l}{C_1(n)},
\end{equation}
\textbf Theorem \ref{Theorem:concave}, there exists $c_2(n)$ and a 1-Lipschitz, $-\frac{c_2(n)}{ r_0}$-concave function $\phi$ defined on $B(p,10 r_0)$. W.L.O.G., we can assume that 
\begin{equation}
\phi(p)=0.
\end{equation}
Since distance functions $|a_ix|,|b_ix|$ are $\frac1l$-concave, let 
\begin{equation}
c_3(n)=\frac{2}{C_1(n)C_2(n)},
\end{equation}
then the functions 
\begin{equation}\label{CThvarphi}
\varphi_i:=|a_ix|-|a_ip|+ c_3(n)\delta^{n-2} \phi
\end{equation}
and 
\begin{equation}\label{CThpsi}
\psi_i:=|b_ix|-|b_ip|+ c_3(n)\delta^{n-2} \phi
\end{equation}
are $1+100\delta$-Lipscthiz and 
\begin{equation}
\frac{1}{l}- \frac{2}{C_1(n)c_2(n)}\delta^{n-2}\frac{c_2(n)}{r_0}\le -\frac{1}{l}
\end{equation}
concave on $B(p,10r_0)$. Note that 
\begin{equation}\label{CTHvarphi(p)psi(p)=0}
\varphi_i(p)=0,\psi_i(p)=0.
\end{equation}

By proposition \ref{thm:smooth}, for any small positive number 
\begin{equation}
\tau<\delta^{n^2}r_0 
\end{equation}
and $\eta>0$   there exist smooth, $-\frac{1}{10 l}$ concave functions $f_i, g_i$, such that 
\begin{equation}\label{CThfi-varphii}
	|f_i(x)-\varphi_i(x)|<\eta, |\nabla f_i|<1+200\delta \ \text{ for } x\in B(p,5r_0)\cap \{x:|a_ix|-|a_ip|<-\tau\}
\end{equation}
and 
\begin{equation}\label{CThgi-psii}
	|g_i(x)-\psi_i(x)|< \eta, \ |\nabla g_i|<1+200\delta\text{ for } x\in  B(p,5r_0)\cap \{x:|b_ix|-|b_ip|<-\tau\}.
\end{equation}

For 
\begin{equation}
x\in B(p,r_0),
\end{equation}
by \eqref{CThvarphi}, \eqref{CThpsi} and \eqref{CTHvarphi(p)psi(p)=0}, we have
\begin{equation}\label{CThvaphi<}
\begin{array}{ll}
|\varphi_i(x)|&=|\varphi_i(x)-\varphi_i(p)|\\
&<r_0+c_3(n)\delta^{n-2} r_0\\
&=(1+c_3(n) \delta^{n-2}) r_0.
\end{array}
\end{equation}
\begin{equation}\label{CThpsi<}
	\begin{array}{ll}
		|\psi_i(x)|&=|\psi_i(x)-\varphi_i(p)|\\
		&<r_0+c_3(n)\delta^{n-2} r_0\\
		&=(1+c_3(n) \delta^{n-2}) r_0.
	\end{array}
\end{equation}
For $x\in B(p,r_0)\backslash B(p,\tau)$, by \eqref{CThvaphi<}, \eqref{CThpsi<}, \eqref{CThfi-varphii} and \eqref{CThgi-psii}, we have 
\begin{equation}
-(1+c_3(n)\delta^{n-2})r_0-\eta<f_i(x),g_i(x)<(1+c_3(n)\delta^{n-2}) r_0+\eta.
\end{equation}

Then 
\begin{equation}\label{CThAi}
\begin{array}{ll}
A_i:&=\{x\in B(p,r_0)\backslash B(p,\tau):f_i(x)<-\eta-\tau-c_3(n)\delta^{n-2}r_0 \}\\
&\subset \{x\in B(p,r_0): \varphi_i(x)<-\tau-c_3(n)\delta^{n-2}r_0 \}\\
&\subset \{x\in B(p,r_0): |a_ix|-|a_ip|<-\tau \},\\
B_i:&=\{x\in B(p,r_0)\backslash B(p,\tau):g_i(x)<-\eta-\tau-c_3(n)\delta^{n-2}r_0 \}\\
&\subset \{x\in B(p,r_0): \psi_i(x)<-\tau-c_3(n)\delta^{n-2}r_0 \}\\
&\subset \{x\in B(p,r_0): |b_ix|-|b_ip|<-\tau \},\\
\end{array}
\end{equation}
Let 
\begin{equation}\label{CThDi}
\begin{array}{ll}
D_i:&=(B(p,r)\backslash B(p,\tau))\backslash (A_i\cup B_i)\\
&=\{x\in B(p,r_0)\backslash B(p,\tau): f_i(x)\ge -\eta-\tau-c_3(n)\delta^{n-2}r_0, g_i(x)\ge -\eta-\tau-c_3(n)\delta^{n-2}r_0\}.
\end{array}
\end{equation}
Let $S_{f_i}(t)$ ($S_{g_i}(t))$ be the level set of $f_i^{-1}(t)$($g_i^{-1}(t)$). If $t<-\eta$, then $p\notin f_i^{-1}(t),p\notin g_i^{-1}(t)$. Since $f_i,g_i$ are smooth, strictly concave functions without critical point, then $f_i^{-1}(t)$ ($g_i^{-1}(t)$) is empty or n-1 dimensional Riemannian manifold.
And by Gauss formula, they have non-negative sectional curvature. Denote by $Sc_{f_i}$ ($Sc_{g_i}$) the Scalar curvature of $f_i^{-1}(t)$ ($g_i^{-1}(t)$) with respect to the intrinsic metric.
Let 
\begin{equation}
a=\tau+\eta+c_3(n)\delta^{n-2}r_0.
\end{equation}
By \eqref{CThAi} and \eqref{CThDi}, we have 
\begin{equation}\label{CThUDi=hi<-tau}
B(p,r_0)\backslash \cup_{i=1}^n D_i=\cup_{h_i=f_i \text{ or } g_i} \{x\in B(p,r_0)\backslash B(p,\tau): h_1<-a, h_2<-a,...,h_n<-a\}.
\end{equation}
By \eqref{App1:Scal<sumSchi} in Appendix 1, for $x\in \{B(p,r_0)\backslash B(p,\tau): h_1<-a, h_2<-a,...,h_n<-a\}$, there exists $\tilde{c}_0(n)$, such that 
\begin{equation}\label{CThleScfiScgi}
Scal \le \tilde{c}_0(n)\sum_{i=1}^n Sc_{h_i}.
\end{equation}
By \eqref{CThUDi=hi<-tau} and \eqref{CThleScfiScgi}, we have
\begin{equation}\label{CThgai1}
\begin{array}{ll}
\int_{B(p,r_0)\backslash \cup_{i=1}^n D_i} Scal \ dvol
&= \int_{\cup_{h_i=f_i \text{ or } g_i} \{x\in B(p,r_0)\backslash B(p,\tau): h_1<-a, h_2<-a,...,h_n<-a\}} Scal\\
&\le  \sum_{h_i=f_i \ or \ g_i} \int_{\{x\in B(p,r_0)\backslash B(p,\tau): h_1<-a, h_2<-a,...,h_n<-a\}} Scal\\
&\le  \tilde{c}_0(n) \sum_{h_i=f_i \ or \ g_i} \int_{\{x\in B(p,r_0)\backslash B(p,\tau): h_1<-a, h_2<-a,...,h_n<-a\}} \sum_{i=1}^n Sc_{h_i} \\
\end{array}
\end{equation}
Since 
\begin{equation}\label{CThh1hnsubsethi}
\{x\in B(p,r_0)\backslash B(p,\tau):h_1<-\tau,h_2<-a,...,h_n<-a \}\subset \{x\in B(p,r_0)\backslash B(p,\tau): h_i<-a  \ for \ 1\le i \le n\}
\end{equation}
By \eqref{CThh1hnsubsethi} and \eqref{CThgai1},
\begin{equation}\label{CThintbackslashUD_i<}
\begin{array}{ll}
\int_{B(p,r_0)\backslash \cup_{i=1}^n D_i} Scal \ dvol 
&\le \tilde{c}_0(n)\sum_{h_i=f_i\ or \ g_i}\int_{\{x\in B(p,r_0)\backslash B(p,\tau):h_i< -a \} }\sum_{i=1}^n Sc_{h_i}\\
&\le \tilde{c}_0(n)\sum_{h_i=f_i\ or \ g_i} \sum_{i=1}^n \int_{\{x\in B(p,r_0)\backslash B(p,\tau):h_i<-a\} } Sc_{h_i}\\
&\le c(n)\sum (\int_{\{x\in B(p,r_0)\backslash B(p,\tau):f_i<-a\}} Sc_{f_i}+ \int_{\{x\in B(p,r_0)\backslash B(p,\tau):g_i<-a\}} Sc_{g_i})\\
&=c(n) \sum_{i=1}^n (\int_{A_i} Sc_{f_i}+\int_{B_i}Sc_{g_i})
\end{array}
\end{equation}
Next, we estimate $\int_{A_i} Sc_{f_i}$ and $\int_{B_i} Sc_{g_i}$.
By \eqref{CThfi-varphii} and \eqref{CThvaphi<}, we have
\begin{equation}\label{CThAi><}
A_i\subset \{x\in B(p,r_0)\backslash B(p,a_0): -(1+c_3(n)\delta^{n-2}) r_0-\eta<f_i(x)<-\eta \}
\end{equation}
Note that for $x\in A_i\cup B_i$, by Proposition \ref{prop:A_iB_i10000delta},
\begin{equation}\label{CThnablafi>}
\begin{array}{ll}
|\nabla f_i(x)|&\ge d_x f_i(\uparrow_x^{b_i})\\
&\ge 1-400\delta.
\end{array}
\end{equation}
By coarea formula, \eqref{CThnablafi>} and \eqref{CThAi><},
\begin{equation}\label{CThintAi<coarea}
\begin{array}{ll}
\int_{A_i} Sc_{f_i}&\le \int_{-(1+c_3(n)\delta^{n-2}) r_0-\eta}^{-\eta} \int_{f_i^{-1}(t)\cap  B(p,r_0)} \frac{Sc_{f_i}}{|\nabla f_i|} dH^{n-1} dt\\
&\le (1+2000\delta)\int_{-(1+c_3(n)\delta^{n-2})r_0-\eta}^{-\eta} \int_{f_i^{-1}(t)\cap  B(p,r_0)} Sc_{f_i} dH^{n-1} dt.
\end{array}
\end{equation}
The intrinsic metric of $f_i^{-1}(t)\cap B(p,r_0)$ is less than $1.1 \delta$ of the extrinsic metric, then there exists a point $p_t\in f^{-1}(t) \cap B(p,r_0)$ such that 
\begin{equation}\label{CThsubsetBpt}
f_i^{-1}(t)\cap B(p,r_0) \subset B(p_t, 1.1 r_0) \subset f_i^{-1}(t)
\end{equation}
By Proposition \ref{prop:A_iB_i10000delta}, $p_t$ has an $(n-1,10000\delta)$ strainer $\{A_i,B_i\}_{i=1}^{n-1}$ with 
\begin{equation}
|A_ip|,|B_ip| >\delta l.
\end{equation}
Then Theorem \ref{thm:technical} and \eqref{CThsubsetBpt}, there exists $c_4(n)$ such that 
\begin{equation}\label{CThintfi(t)}
\begin{array}{ll}
\int_{f_i^{-1}(t)\cap  B(p,r_0)} Sc_{f_i} dH^{n-1} &\le \int_{B(p_t,1.1 \delta r_0)} Sc_{f_i} dH^{n-1}\\
&\le c_4(n) r_0^{n-2}\delta. 
\end{array}
\end{equation}
Then by \eqref{CThintAi<coarea} and \eqref{CThintfi(t)},
\begin{equation}\label{CThintAi<}
\begin{array}{ll}
\int_{A_i} Sc_{f_i} &\le 2(1+c_3(n)\delta^{n-2})r_0\cdot c_4(n) r_0^{n-3} \delta\\
&\le 2c_4(n)(1+c_3(n)\delta^{n-2})\cdot  r_0^{n-2} \delta.
\end{array}
\end{equation}
In the same way, we can prove that 
\begin{equation}\label{CThintBi<}
\int_{B_i} Sc_{g_i} \le 2c_4(n)(1+c_3(n)\delta^{n-2})\cdot  r_0^{n-2} \delta.
\end{equation}
By \eqref{CThintAi<}, \eqref{CThintBi<} and \eqref{CThintbackslashUD_i<}, we have 
\begin{equation}
\int_{B(p,r_0)\backslash \cup_{i=1}^n D_i}  Scal \ dvol \le \tilde{c}_4(n)r_0^{n-2} \delta.
\end{equation}
Choose 
\begin{equation}\label{CThepsilon=}
	\epsilon=c_3(n)\delta^{n-3}r_0,
\end{equation}
by the following Theorem \eqref{Theorem:nsets}, we can choose n sets of $(n,\delta)$ strainers $\{(a_i^q,b_i^q)\}_{i=1}^n$ for $p$, $1\le q \le n$ such that 
\begin{equation}
	\cup_{k=1}^q (B(p,r_0)\backslash \cup_{i=1}^n D^q_i) \supset B(p,r_0)\backslash B(p,\epsilon).
\end{equation}
Then 
\begin{equation}\label{CThintbackslashB(p,epsilon)}
	\begin{array}{ll}
		\int_{B(p,r_0)\backslash B(p,\epsilon)} Scal \ dvol 
		&\le \sum_{q=1}^n \int_{B(p,r_0)\backslash \cup_{i=1}^n D_i^q} Scal \ dvol \\
		&\le nC_1(n) r_0^{n-2} \delta.
	\end{array}
\end{equation}

By Theorem \ref{ThmSimple},
\begin{equation}\label{CThintBpepsilon}
	\int_{B(p,\epsilon)\backslash p} Scal \ dvol=\frac{\epsilon^{n-2}}{n-2}(\int_{\Sigma} Scal_{\Sigma} \ dvol-(n-1)(n-2)vol(\Sigma)).
\end{equation}
Since the sectional curvature of $\Sigma$ $\ge 1$, by Petrunin's theorem, there exists $c_5(n)$,
\begin{equation}\label{CThintSigma<c5(n)}
\int_{\Sigma} Scal_{\Sigma} \ dvol\le c_5(n).
\end{equation}
By \eqref{CThepsilon=}, \eqref{CThintSigma<c5(n)} and \eqref{CThintBpepsilon}, there exists $\tilde{c}_5(n)$, such that 
\begin{equation}\label{CThint<tildec5}
\int_{B(p,\epsilon)\backslash p} Scal \ dvol \le \tilde{c}_5(n) \delta^{(n-3)(n-2)} r_0^{n-2}.
\end{equation}
By \eqref{CThintbackslashB(p,epsilon)} and \eqref{CThint<tildec5}, there exists $C(n)$ such that 
\begin{equation}
\int_{B(p,r_0)\backslash p} \le C(n) r_0^{n-2} \delta.
\end{equation}
\end{proof}

\begin{thm}\label{Theorem:nsets}
We can choose n sets of $(n,\delta)$ strainers $\{(a_i^q,b_i^q)\}_{i=1}^n$ for $p$, $1\le q \le n$ such that 
\begin{equation}\label{thmexistaiqbiq}
	\cup_{k=1}^q (B(p,r_0)\backslash \cup_{i=1}^n D^q_i) \supset B(p,r_0)\backslash B(p,\epsilon),
\end{equation}
where $D_i^q$ are defined as \eqref{CThDi}.

\end{thm}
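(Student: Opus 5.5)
The plan is to reduce \eqref{thmexistaiqbiq} to a statement about great spheres in $S^{n-1}$, using the bi-Lipschitz map $G:\Sigma\to S^{n-1}(1)$ of Theorem \ref{Thm:BiLipfromSigmatoSn-1}.

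\textbf{Step 1: locate $D_i$ in cone coordinates.} Write $x=(\theta,s)\in C(\Sigma)$ with $\theta\in\Sigma$ and $s=|px|$. Take the strainer points of the form $a_i=(\xi_i,l)$ and $b_i=(\eta_i,l)$, where $\xi_i=G^{-1}(e_i)$ and $\eta_i=G^{-1}(-e_i)$ for an orthonormal frame $\{e_i\}$. The cosine law in the cone gives exactly
\begin{equation*}
|a_ix|^2=l^2+s^2-2ls\cos|\theta\xi_i|_\Sigma .
\end{equation*}
Since $s\le r_0\le \delta^{n-2}l$, this yields
\begin{equation*}
|a_ix|-|a_ip|=-s\cos|\theta\xi_i|_\Sigma+O(s^2/l)=-s\cos|\theta\xi_i|_\Sigma+O(\delta^{n-2}s).
\end{equation*}
The same expansion holds for $b_i$, with $\cos|\theta\eta_i|\approx-\cos|\theta\xi_i|$ up to $C(n)\delta$.

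By \eqref{CThDi}, a point of $D_i$ satisfies both $|a_ix|-|a_ip|\ge -a-O(\eta)$ and $|b_ix|-|b_ip|\ge -a-O(\eta)$. Here $a=\tau+\eta+c_3(n)\delta^{n-2}r_0$, and the additional $O(\eta)$ error comes from the smoothing together with the term $c_3\delta^{n-2}\phi$. Now take $s\ge\epsilon=c_3(n)\delta^{n-3}r_0$. Dividing by $s$, the two inequalities force $|\cos|\theta\xi_i||\le C(n)\delta$. Transporting by $G$, which changes distances by at most a factor $1\pm C(n)\delta$, this becomes $|\langle G(\theta),e_i\rangle|\le C'(n)\delta$.

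Hence $B(p,r_0)\setminus(B(p,\epsilon)\cup\bigcup_iD_i)$ contains every point whose direction $G(\theta)$ lies outside every band $N_{C'\delta}(e_i^{\perp}\cap S^{n-1})$ for $i=1,\dots,n$. Points with $s<\epsilon$ are excluded by assumption, so this is all that is needed.

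\textbf{Step 2: choose the frames.} We want $n$ orthonormal frames $\{e_i^q\}_{i=1}^n$, $1\le q\le n$, with the following property: for every choice of indices $i_1,\dots,i_n$, the vectors $e^1_{i_1},\dots,e^n_{i_n}$ span $\mathbb R^n$, and
\begin{equation*}
|\det(e^1_{i_1},\dots,e^n_{i_n})|\ge c(n)>0 .
\end{equation*}
Such frames exist because the bad configurations form a proper algebraic subset of $O(n)^n$, and only finitely many ($n^n$) index tuples occur. Fixing one generic choice, or arguing by compactness, gives a constant $c(n)$ depending only on $n$.

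Suppose a unit vector $u$ lay in a bad band for every frame, that is, $|\langle u,e^q_{i_q}\rangle|\le C'\delta$ for all $q$. Inverting the matrix with rows $e^q_{i_q}$, whose inverse has norm at most $C''(n)$, gives $|u|\le C''(n)C'(n)\delta<1$ once $\delta$ is small. This is a contradiction. So every direction is good for at least one frame.

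\textbf{Step 3: conclude.} Set $a_i^q=(G^{-1}(e_i^q),l)$ and $b_i^q=(G^{-1}(-e_i^q),l)$. As in the proof of Theorem \ref{thm:ls}, each of these is an $(n,C\delta)$ strainer at $p$. Step 1 applied to each frame, combined with Step 2, gives \eqref{thmexistaiqbiq}.

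\textbf{Main obstacle.} I expect the difficulty to be Step 1. The error sources $\tau$, $\eta$, the term $c_3\delta^{n-2}\phi$, the quadratic remainder $s^2/l$, and the distortion of $G$ must all be shown to be $O(\delta s)$ uniformly for $s\ge\epsilon$. This is exactly why $\epsilon$ is chosen as $\delta^{n-3}r_0$, one power of $\delta$ larger than the shift $\delta^{n-2}r_0$. I would carry it out by bounding each term against $\delta\epsilon$ in turn, using $\tau<\delta^{n^2}r_0$ and taking $\eta$ arbitrarily small.
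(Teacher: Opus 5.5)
Your proposal is correct and follows essentially the same route as the paper. Your Step 1 is the paper's Step 1: for $x\in D_i\setminus B(p,\epsilon)$, the direction of $x$ lies in a $C(n)\delta$-band around the equator of $\xi_i$. You get this from the exact cone cosine law, where the paper bounds the angle at $p$ and then uses the perimeter bound. Your Step 3 is the paper's pullback of frames through the bi-Lipschitz map.

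The only real difference is Step 2. There you bound $|\det(e^1_{i_1},\dots,e^n_{i_n})|$ from below over all $n^n$ index tuples and invert that matrix. The paper instead uses the two-stage Lemma~\ref{lemSn-1}: the first $n-1$ frames confine each intersection of bands near some $\pm w$, and the $n$-th frame is then chosen so its bands miss those points. This is the same genericity condition; your version states it more cleanly and gives an explicit quantitative constant.
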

\begin{proof}
\textbf{Step 1} Prove that 
\begin{equation}\label{CThFi}
	\{\Uparrow_p^x: x\in D_i \backslash B(p,\epsilon)\}\subset 
	\{v \in \Sigma_p: ||\xi_i v|-\frac{\pi}{2}|<2c_6(n)\delta\}:=F_i
\end{equation}

For $x \in D_i$, by \eqref{CThDi}, \eqref{CThfi-varphii} and \eqref{CThvarphi}, 
\begin{equation}
\begin{array}{ll}
-2\eta-\tau-c_3(n)\delta^{n-2} r_0&\le \varphi_i(x)\\
&=|a_ix|-|a_ip|+c_3(n)\delta^{n-2} \phi(x).
\end{array}
\end{equation}

then 
\begin{equation}\label{CThaip-aix<}
|a_ix|-|a_ip|\ge -\tau-2\eta-2c_3(n)\delta^{n-2} r_0.
\end{equation}
In the same way, we can prove that 
\begin{equation}
|b_ix|-|b_ip|\ge -\tau-2\eta-2c_3(n)\delta^{n-2} r_0.
\end{equation}

For 
\begin{equation}\label{CThxinDibackepsilon}
x\in D_i \cap B(p,r_0) \backslash B(p,\epsilon),
\end{equation}
We claim that 
\begin{equation}
	\angle a_i px> \frac{\pi}{2}-20\delta,
\end{equation}
and 
\begin{equation}
	\angle b_i px>\frac{\pi}{2}-20 \delta.
\end{equation}
In fact, by cosine law, we have 
\begin{equation}
\begin{array}{ll}
\cos\angle  a_i px&=\frac{|a_ip|^2+|px|^2-|a_ix|^2}{2|a_ip||px|}\\
&=\frac12 \frac{|px|}{|a_ip|}+\frac12 \frac{|a_ip|+|a_ix|}{|a_ip|}\frac{|a_ip|-|a_ix|}{|px|}\\
\end{array}
\end{equation}
Since 
\begin{equation}
|px|<r_0< \frac{\delta^{n-2}l}{C_1(n)} \text{ and } |a_ip| \ge l,
\end{equation}
then 
\begin{equation}\label{CThpxchuaip}
\frac{|px|}{|a_ip|}<\frac{\delta^{n-2}}{C_1(n)}.
\end{equation}

By \eqref{CThepsilon=}, for $x\in B(p,r_0)\backslash B(p,\tau)$,
\begin{equation}\label{CThpx>epsilon}
|px|\ge \epsilon=c_3(n)\delta^{n-3} r_0.
\end{equation}
By \eqref{CThpx>epsilon}, \eqref{CThxinDibackepsilon} and\eqref{CThaip-aix<}, and note that we can assume that $\tau,\eta<<\delta^{n-2}l$, we have
\begin{equation}\label{CTh<5delta}
\frac{|a_ip|-|a_ix|}{|px|}< 5 \delta.
\end{equation}
By \eqref{CTh<5delta} and \eqref{CThpxchuaip},
\begin{equation}
\cos\angle a_ipx<10\delta.
\end{equation}
Then 
\begin{equation}
\angle a_i px> \frac{\pi}{2}-20\delta.
\end{equation}
In the same way, we can prove that for $x\in D_i \cap B(p,r)$, 
\begin{equation}
\angle b_i px>\frac{\pi}{2}-20 \delta.
\end{equation}
Consider the space of direction $\Sigma_p$, W.L.O.G., we can assume that there exist only one geodesic connecting p and $a_i$, p and $b_i$. Denote by $\uparrow_p^{a_i}=\xi_i, \uparrow_p^{b_i}=\eta_i$, then
\begin{equation}
|\xi_i \Uparrow_p^x|> \frac{\pi}{2}-20\delta, |\eta_i \Uparrow_p^x|>\frac{\pi}{2}-20\delta.
\end{equation}
Since $\{(a_i,b_i)\}$ are $(n,\delta)$ strainers at $p$, then there exits $c_6(n)$, such that 
\begin{equation}\label{CThxieta>}
|\xi_i \eta_i|>\pi-c_6(n)\delta, 
\end{equation}
and 
\begin{equation}\label{CTh+<2pi}
|\xi_i \eta_i|+|\xi_i \Uparrow_p^x|+|\eta_i \Uparrow_p^x|\le 2\pi,
\end{equation}
Suppose that $c_6(n)>20$, by \eqref{CThxieta>} and \eqref{CTh+<2pi}, we have
\begin{equation}
|\xi_i \Uparrow_p^x|<\frac{\pi}{2}+2c_6(n)\delta.
\end{equation}
So for $1\le i \le n$, 
\begin{equation}\label{CThFi}
\{\Uparrow_p^x: x\in D_i \backslash B(p,\epsilon)\}\subset 
\{v \in \Sigma_p: ||\xi_i v|-\frac{\pi}{2}|<2c_6(n)\delta\}:=F_i
\end{equation}

\textbf{Step 2} Prove that there exist $\{F_1^q,...,F_n^q\},1\le q \le n$, such that 
\begin{equation}
\cap_{q=1}^n \cup_{i=1}^n F_i^q=0
\end{equation}
and 
\begin{equation}
\cup_{q=1}^n (\Sigma_p \backslash \cup_{i=1}^n F_i^q)=\Sigma_p.
\end{equation}

By the lemma \ref*{lemSn-1} below, there exists n sets of orthogonal bases $\{v_1^q,...,v_n^q\}\subset S^{n-1}(1),1\le q \le n$,  when $a$ is sufficiently small, the subsets
\begin{equation}\label{CThAqisatisfy}
	A^q_i:=\{x\in S^{n-1}(1): ||xv^q_i|-\frac{\pi}{2}|<a\},\ 1\le i \le n.
\end{equation}
satisfy
\begin{equation}
	\cap_{q=1}^n \cup_{i=1}^n A_i^q=\emptyset
\end{equation}
and 
\begin{equation}
\cup_{q=1}^n (S^{n-1}(1)\backslash \cup_{i=1}^n A_i^q)=S^{n-1}(1).
\end{equation}

For any orthogonal basis $\{v^q_1,...,v^q_n\} \subset S^{n-1}(1)$, consider the $1\pm C_2(n)\delta$ bi-Lipscthiz map $F:S^{n-1}(1) \to \Sigma_p$, let 
\begin{equation}\label{CThxiqi=F()}
\xi^q_i=F(v^q_i),
\end{equation}
\begin{equation}\label{CThFiq=...}
F_i^q:=\{\xi \in \Sigma_p:|\xi_i^q\xi|-\frac{\pi}{2}|<2c_6(n)\delta\},
\end{equation}
and 
\begin{equation}
a_i^q:=(l,\xi^q_i)\in C(\Sigma),\ b_i^q:=(l,-\xi^q_i)\in C(\Sigma).
\end{equation}
$D_i^q$ are defined as \eqref{CThDi}, then by \eqref{CThFi}, we have 
\begin{equation}\label{CThDisubFqi}
	\{\Uparrow_p^x: x\in D^q_i \backslash B(p,\epsilon)\}\subset F^q_i.
\end{equation}
For $\xi\in F^q_i$, by \eqref{CThxiqi=F()} and \eqref{CThFiq=...},
\begin{equation}
\begin{array}{ll}
|F^{-1}(\xi) v^q_i|&<(1+C_2(n)\delta)|\xi \xi^q_i|\\
&<(1+C_2(n)\delta)(\frac{\pi}{2}+2c_6(n)\delta)\\
&<\frac{\pi}{2}+c_7(n)\delta.
\end{array}
\end{equation}
Then 
\begin{equation}\label{CThF-1(F_i)}
F^{-1}(F_i^q)\subset \{x\in S^{n-1}: ||xv_i|-\frac{\pi}{2}|<c_7(n)\delta\}.
\end{equation}
So we choose 
\begin{equation}\label{CTha=}
a=c_7(n)\delta.
\end{equation}
By \eqref{CThAqisatisfy}, \eqref{CThF-1(F_i)} and \eqref{CTha=}, we have 
\begin{equation}
F^{-1}(F_i^q) \subset A_i^q.
\end{equation}
Then 
\begin{equation}
F^{-1}(\cup_{i=1}^n F_i^q )=\cup_{i=1}^n F^{-1}(F_i^q)\subset \cup_{i=1}^n A_i^q.
\end{equation}
\begin{equation}
\begin{array}{ll}
F^{-1}(\cap_{q=1}^n \cup_{i=1}^n F_i^q)&=\cap_{q=1}^n F^{-1}(\cup_{i=1}^n F_i^q )\\
&\subset \cap_{q=1}^n \cup_{i=1}^n A_i^q\\
&=\emptyset.
\end{array}
\end{equation}
So 
\begin{equation}
\cap_{q=1}^n \cup_{i=1}^n F_i^q=\emptyset.
\end{equation}
\begin{equation}\label{CThcup=bla=emptyset}
\begin{array}{ll}
\cup_{q=1}^n (\Sigma_p\backslash \cup_{i=1}^n F_i^q)
&=\Sigma_p\backslash (\cap_{q=1}^n \cup_{i=1}^n F_i^q)\\
&=\Sigma_p.
\end{array}
\end{equation}
Now we have finished step 2.

For any 
\begin{equation}\label{CThxinr0notineps}
	x\in B(p,r_0)\backslash B(p,\epsilon), 
\end{equation}
by \eqref{CThcup=bla=emptyset},
\begin{equation}
	\Uparrow_p^x \subset \cup_{q=1}^k (\Sigma_p \backslash \cup_{i=1}^n F_i^q).
\end{equation}
Then for any direction $\uparrow_p^x \in \Uparrow_p^x$, there exists a q, such that 
\begin{equation}
	\uparrow_p^x \in \Sigma_p \backslash \cup_{i=1}^n F_i^q.
\end{equation}
Then 
\begin{equation}\label{CThnotinFiq}
	\uparrow_p^x \notin F_i^q \text{ for } 1\le i \le n.
\end{equation}
By \eqref{CThDisubFqi}, 
\begin{equation}\label{CThxinDiqnotineps}
	x \notin D_i^q \backslash B(p,\epsilon) \text{ for } 1\le i \le n.
\end{equation}
Then by \eqref{CThxinDiqnotineps} and \eqref{CThxinr0notineps},
\begin{equation}
	x \notin D_i^q \text{ for } 1\le i \le q
\end{equation}
and 
\begin{equation}
	x\in B(p,r_0) \backslash \cup_{i=1}^n D_i^q.
\end{equation}

\end{proof}

\begin{lem}\label{lemSn-1}
For any positive integer $n$, there exists a small number $\epsilon(n)>0$ and $k=n$ orthonormal bases 
	\begin{equation}
		\mathcal{V}^j=\{v_1^j,v_2^j,...,v_n^j\} \subset S^{n-1}(1),\ 1\le j \le k
	\end{equation}
	such that if $a<\epsilon(n)$, then the subsets
	\begin{equation}
		A^j_i:=\{x\in S^{n-1}(1): ||xv_i^j|-\frac{\pi}{2}|<a\},\ 1\le i \le n.
	\end{equation}
satisfy
	\begin{equation}
		S^{n-1}(1)\subset \cup_{j=1}^k(S^{n-1}(1) \backslash \cup_{i=1}^n A_i^j )
	\end{equation} 
and 
\begin{equation}\label{Rncapcup=empty}
\cap_{j=k}^n \cup_{i=1}^n A_i^j=\emptyset.
\end{equation}
	\end{lem}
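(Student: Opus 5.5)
The plan is to reduce the lemma to a genericity statement about hyperplanes, then use compactness of $S^{n-1}(1)$ to get a uniform margin $\epsilon(n)$.

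\textbf{Reformulation.} For a unit vector $v$ and $x\in S^{n-1}(1)$ we have $\cos|xv|=\langle x,v\rangle$. Hence
$$x\in A_i^j \iff |\langle x,v_i^j\rangle|<\sin a .$$
Both conclusions of the lemma are then equivalent to one statement. The first says that every $x$ lies outside $\cup_i A_i^j$ for some $j$. The second, \eqref{Rncapcup=empty} (read with $j$ running from $1$ to $k=n$), says that $\cap_j\cup_i A_i^j=\emptyset$. By De Morgan these say the same thing, namely
$$\text{for every } x\in S^{n-1}(1) \text{ there is } j \text{ with } |\langle x,v_i^j\rangle|\ge \sin a \text{ for all } 1\le i\le n.$$
So it suffices to find bases $\mathcal V^1,\dots,\mathcal V^n$ such that the continuous function
$$m(x):=\max_{1\le j\le n}\ \min_{1\le i\le n}|\langle x,v_i^j\rangle|$$
is strictly positive on $S^{n-1}(1)$. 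By compactness it then has a minimum $m_0>0$, and we take $\epsilon(n):=\arcsin m_0$. For $a<\epsilon(n)$ we get $\sin a<m_0\le m(x)$ for every $x$, which gives both conclusions.

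\textbf{Positivity of $m$.} We have $m(x)=0$ exactly when, for each $j$, there is an index $i_j$ with $\langle x,v^j_{i_j}\rangle=0$. That is, $x$ lies in the intersection of the $n$ hyperplanes $(v^j_{i_j})^\perp$, $j=1,\dots,n$. If for every selection $(i_1,\dots,i_n)\in\{1,\dots,n\}^n$ the vectors $v^1_{i_1},\dots,v^n_{i_n}$ are linearly independent, this intersection is $\{0\}$, so no unit $x$ has $m(x)=0$. The remaining task is to choose the bases so that all $n^n$ determinants $\det(v^1_{i_1},\dots,v^n_{i_n})$ are nonzero.

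\textbf{Genericity, the main step.} Write $v^j_i=Q_je_i$ with $Q_j\in SO(n)$. For a fixed selection $(i_1,\dots,i_n)$, the map
$$(Q_1,\dots,Q_n)\mapsto \det(Q_1e_{i_1},\dots,Q_ne_{i_n})$$
is a real-analytic (polynomial) function on the connected real-analytic manifold $SO(n)^n$. It is not identically zero: $SO(n)$ acts transitively on $S^{n-1}$ for $n\ge2$, so we may choose $Q_j$ with $Q_je_{i_j}=e_j$, making the determinant equal to $1$. A real-analytic function on a connected analytic manifold that is not identically zero has a nowhere dense zero set. The union of these $n^n$ zero sets is therefore nowhere dense, and its complement is nonempty. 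Any $(Q_1,\dots,Q_n)$ in the complement gives the desired orthonormal bases, with $k=n$. The case $n=1$ is trivial, since $A_1^j=\emptyset$ for $a<\pi/2$.

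The only step needing care is this one: checking that each determinant polynomial is not identically zero, so that a simultaneous generic choice exists. The reformulation and the compactness argument are routine.
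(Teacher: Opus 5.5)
Your argument is correct, but it is organized differently from the paper's.

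The paper works in two stages:
\begin{itemize}
\item It takes the first $n-1$ bases so that any choice of one vector from each is linearly independent. It asserts this is easy, together with a uniform transversality bound $100^{-n}$.
\item By splitting $x=x_w+x^T$, it shows that each intersection $A^1_{l_1}\cap\cdots\cap A^{n-1}_{l_{n-1}}$ lies in an $O(a)$-neighborhood of the two unit normals $\pm w$ to the chosen vectors.
\item It then uses a volume count to pick an $n$-th basis whose bands $A^n_i$ miss all $n^{n-1}$ of these neighborhoods.
\end{itemize}

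You instead treat both conclusions at once as positivity of the continuous function $m(x)=\max_j\min_i|\langle x,v_i^j\rangle|$. You observe that $m$ vanishes exactly when some transversal selection $v^1_{i_1},\dots,v^n_{i_n}$ is linearly dependent. You then obtain all $n^n$ nondegeneracy conditions simultaneously by real-analytic genericity on $SO(n)^n$, and read off $\epsilon(n)$ by compactness.

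What your route buys:
\begin{itemize}
\item It makes explicit the precise general-position condition that the paper's two stages amount to. There, the $n$-th basis must satisfy $\langle v^n_i,w\rangle\neq 0$ for every normal $w$.
\item It actually proves the genericity claim that the paper leaves to the reader.
\item It fixes the bases before $\epsilon(n)$ is determined, which matches the quantifier order of the statement. The paper as written chooses its last basis after $a$ is fixed, though this is easily repaired via the same nonvanishing condition.
\end{itemize}

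The paper's approach is more hands-on and in principle ties $\epsilon(n)$ to an explicit transversality constant. Yours produces $\epsilon(n)=\arcsin m_0$ only non-effectively.

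Your reading of the second conclusion as an intersection over $j=1,\dots,k$ is the intended one.
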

	
	\begin{proof}
		
		\begin{equation}
			\begin{array}{ll}
				S^{n-1}(1)\backslash \cup_{j=1}^k(S^{n-1}(1) \backslash \cup_{i=1}^n A_i^j)
				&=\cap_{j=1}^k [\cup_{i=1}^n A_i^j]\\
				&=\cup_{1\le l_1,l_2,...,l_k \le n, l_i\neq l_j} A_{l_1}^1\cap A_{l_2}^2...\cap A_{l_k}^k
			\end{array}
		\end{equation}
		For $1\le q \le k$, suppose that $A^q_{l_q}$ is corresponded to $w_q\in S^{n-1}(1)$, then 
		\begin{equation}
			A_{l_q}=\{x\in S^{n-1}(1): ||xw_q|-\frac{\pi}{2}|<a\}.
		\end{equation}
		Then 
		\begin{equation}\label{A1...k}
			A_{l_1}^1\cap A_{l_2}^2...\cap A^k_{l_k} =\{x\in S^{n-1}(1):||x w_q|-\frac{\pi}{2}|<a\ for \ all \ 1\le q \le k\}.
		\end{equation}
		We can assume that $w_1,...,w_k$ are linear independent (it's easy to get such k orthogonal basis). 
		
		Let 
		\begin{equation}
			k=n-1
		\end{equation}
		then the subset 
		\begin{equation}
			\{x\in S^{n-1}: |xw_q|=\frac{\pi}{2}\}
		\end{equation}
		consists of two points $w,-w \in S^{n-1}(1)$. Next we show that the subset (\ref{A1...k}) is contained in a small neighborhood of $\{w,-w\}$.

		For points $x\in A_{l_1}^1\cap A_{l_2}^2...\cap A^k_{l_k} $, suppose 
		\begin{equation}
			x=x_w+x^T,
		\end{equation}
		where $x^T\in R^n$ is tangent to the linear subspace of $R^n$ spanned by $w_1,...,w_k$, $x_w$ is on the line $\{tw: -\infty<t<+\infty\}$. We will show that $x^T$ is small.
By \eqref{A1...k}, for any q, we have 
\begin{equation}\label{Ap:sina>}
\begin{array}{ll}
\sin a&>\cos |x w_q|\\
&=x\cdot w_q\\
&=x^T\cdot w_q\\
&=|x^T| \frac{x^T}{|x^T|}\cdot w_q\\
&=|x^T| \cos |\frac{x^T}{|x^T|} w_q|.
\end{array}
\end{equation}
Note that these $n-1$ orthogonal bases can be chosen, such that 
\begin{equation}\label{bmin>}
\max_{1\le k \le q} \cos|\frac{x^T}{|x^T|} w_q|>\frac{1}{100^n}\ for \ any\ x \in S^{n-1}(1).
\end{equation}
By \eqref{Ap:sina>} and \eqref{bmin>},
\begin{equation}
\begin{array}{ll}
|x^T|
&\le \frac{n-1}{b_{min}} \sin a\\
&<c_1(n) a.
\end{array}
\end{equation} 
		We get that
		\begin{equation}
			|x_w| \ge \sqrt{1-c_1(n)^2a^2}.
		\end{equation}
		
		W.L.O.G, suppose that $|xw|=\angle(x,w) \le \frac{\pi}{2}$ (if $>\frac{\pi}{2}$, replace $w$ by $-w$). Then
		\begin{equation}
			\begin{array}{ll}
				\cos |xw|&=x\cdot w\\
				&=|x_w| \\
				&\ge \sqrt{1-c_1(n)^2a^2}\\
				&\ge 1-10c_1(n)^2 a^2.
			\end{array}
		\end{equation}
		Then 
		\begin{equation}\label{Rn|xw|<}
			|xw|<10 c_1(n) a.
		\end{equation}
		Then we have proved that the subset (\ref{A1...k}) is contained in a small neighborhood of $\{w,-w\}$ in $S^{n-1}(1)$.

		Thus we have proved that for any combination  $A_{l_1},...,A_{l_{n-1}}$, $A^1_{l_1}\cap A_{l_2}^2...\cap A_{l_{n-1}}^{n-1}$ is contained in two neighborhood $U_{w}\cup U_{-w}$. Since $1\le A_{l_1}^1,...,A_{l_{n-1}}^{n-1}$, there are $n^{n-1}$ combinations of $A^1_{l_1}\cap A_{l_2}^2...\cap A_{l_{n-1}}^{n-1}$. Then there exists $w_j,-w_j \in S^{n-1}(1),1\le j \le n^{n-1} $ and their neighborhoods $U_{w_j},U_{-w_j}$, such that 
\begin{equation}\label{AP:UAsubsetUw}
\cup_{1\le l_1,...,l_{n-1} \le n}\{A^1_{l_1}\cap...A^{n-1}_{l_{n-1}}\} \subset \cup_{1\le l_1,...,l_{n-1} \le n} (U_{w_j}\cup U_{-w_j})
\end{equation}
Let 
\begin{equation}\label{RnOmega}
\begin{array}{ll}
\Omega_{w_j}:&=\{x\in S^{n-1}(1)| ||xU_{w_j}|-\frac{\pi}{2}|<a\}\\
\Omega_{-w_j}:&=\{x\in S^{n-1}(1)| ||xU_{-w_j}|-\frac{\pi}{2}|<a\}.
\end{array}
\end{equation}
By \eqref{Rn|xw|<} and \eqref{RnOmega}, there exists $c_2(n)$, such that 
\begin{equation}\label{RnvolumeUwi}
vol(\Omega_{w_i}),vol(\Omega_{-w_i}) \le c_2(n) a.
\end{equation}
For a sufficiently small, the volume of the union $\cup_{1\le l_1,...,l_{n-1}\le n} (U_{w_j}\cup U_{w_j})$ can be very small. Then we can choose another orthogonal basis $\{v_1,...,v_n\}\subset S^{n-1}(1)$ such that the corresponding $A_1,...,A_n$ satisfy
\begin{equation}\label{AP:AicapUw=empty}
\cup_{1\le l_1,...,l_{n-1}\le n} A_i\cap (U_{w_j}\cup U_{w_j})=\emptyset \ for \ all \ 1\le i \le n.
\end{equation}
By \eqref{AP:AicapUw=empty} and \eqref{AP:UAsubsetUw}, for any combination $A_{l_1}^1,...,A_{l_k}^k$,
\begin{equation}
A_{l_1}^1\cap A_{l_2}^2\cap...\cap A_{l_k}^k \cap A_i=\emptyset \ for \ all \ 1\le i\le n.
\end{equation}
for any i and any $1\le l_1,...,l_{n-1}\le n$. So eventually we can choose $n=n-1+1$ orthogonal bases such that 
	\begin{equation}
	\begin{array}{ll}
		S^{n-1}(1)\backslash \cup_{j=1}^n(S^{n-1}(1) \backslash \cup_{i=1}^n A_i^j)&=\cap_{j=1}^k [\cup_{i=1}^n A_i^j]\\
		&=\cup_{1\le l_1,l_2,...,l_n \le n, l_i\neq l_j} A_{l_1}^1\cap A_{l_2}^2...\cap A_{l_n}^n\\
	 &=\emptyset.
	\end{array}
\end{equation}
Then we finish the proof.
\end{proof}

\section{Appendix 1}
Repeat the proof in \cite{Burago1992ad}, write clearly $\kappa(\delta)$ in each step. We can get the following quantitative version of theorems.

\begin{thm}\label{Thm:Sumcos}
	There exists constant $C(n)$. Let $\Sigma$ be an n-1 dimensional Alexandrov space with curvature $\ge 1$. If $\Sigma$ has an $(n,\delta)$ strainer $\{(a_i,b_i)\}_{i=1}^n$, then for any $x\in \Sigma$,
	\begin{equation}
		|\sum_{i=1}^n\cos^2 |a_ix|-1|<C(n)\delta.
	\end{equation}
	For $x,y\in \Sigma$, 
	\begin{equation}
		|\sum_{i=1}^n \cos |a_ix| \cos |a_iy|-\cos|xy||<C(n)\delta.
	\end{equation}
\end{thm}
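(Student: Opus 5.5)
The plan is to compare $\Sigma$ with the round sphere through the cone $C(\Sigma)$ and the Euclidean coordinate functions $\cos|a_i\cdot|$, quantifying each step of the argument in \cite{Burago1992ad}.

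\textbf{Step 1: the vectors $\xi_i$.} In $C(\Sigma)$ set $\xi_i=(a_i,1)$, $\eta_i=(b_i,1)$. For a point $X=(x,1)$, the cone metric gives
\[
\langle X,\xi_i\rangle:=\cos|a_ix|,
\]
so the quantity $\sum_i\cos|a_ix|\cos|a_iy|$ is the candidate Euclidean inner product of the images of $X,Y$ under
\[
\Phi(x)=(\cos|a_1x|,\dots,\cos|a_nx|).
\]

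\textbf{Step 2: pairwise estimates.} Using curvature $\ge 1$, we have $|a_ix|+|xb_i|\le 2\pi-|a_ib_i|<\pi+\delta$ by the perimeter bound. We also have $|a_ix|+|xb_i|\ge|a_ib_i|>\pi-\delta$. Hence $\big||b_ix|-(\pi-|a_ix|)\big|<\delta$, i.e. $\cos|b_ix|=-\cos|a_ix|+O(\delta)$. Similarly, from $|a_ia_j|,|a_ib_j|>\pi/2-10\delta$ and the perimeter bound, we get $\big||a_ia_j|-\pi/2\big|<C\delta$.

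\textbf{Step 3: orthogonality and Gram matrix.} The vectors $\xi_i$ in the Euclidean cone $C(\Sigma)$ (viewed locally near the apex) are then almost orthonormal: their Gram matrix is $I+O(\delta)$. The key step is to show that for any unit $X$, its ``projection'' onto $\mathrm{span}\{\xi_i\}$ has norm $1-O(\delta)$. Concretely, I want
\[
\sum_i\cos^2|a_ix|\ge 1-C\delta .
\]
The upper bound $\sum_i\cos^2|a_ix|\le 1+C\delta$ should follow from a Toponogov-based Bessel inequality. Apply the comparison at $x$: the directions $\uparrow_x^{a_i}$ in $\Sigma_x$ satisfy $\angle a_ixa_j\ge\tilde\angle_1 a_ixa_j$. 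Combined with Step 2 (which gives $\angle a_ixb_i\ge \pi-C\delta$ from the comparison angle), the $2n$ directions $\uparrow_x^{a_i},\uparrow_x^{b_i}$ form an almost-orthonormal frame up to sign in $\Sigma_x$. Since $\dim\Sigma_x=n-2$, this frame can be used to expand the direction $\uparrow_x^{a_j}$.

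\textbf{Step 4: the lower bound (main obstacle).} I expect the lower bound to be the real work. My first approach is a Lipschitz/first-variation argument. The function $h(x)=\sum_i\cos^2|a_ix|$ is $O(1)$-Lipschitz, and at points where $h<1-C\delta$ one can find a direction in $\Sigma_x$ (using $\dim\Sigma_x=n-2$ together with the quantitative strainer at $x$) that moves away from all $a_i,b_i$ with angle gap $>\pi/2+c$. This would contradict the inequality $|a_ix|+|b_ix|\le\pi+\delta$ once the displacement is large enough, giving a contradiction with $\delta$-linear loss.

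If that fails, the fallback is volume. $\Phi$ maps $\Sigma$ into the $1$-dimensional-thin shell $\{|u|\in[1-\epsilon,1+C\delta]\}$ $(1+C\delta)$-Lipschitz-ly. Then by the rad estimate $\mathrm{rad}\,\Sigma\ge\pi(1-\delta)$ (from \cite{GP}, as in Theorem~\ref{thm:1-delta-ndelta}) one forces $\epsilon=O(\delta)$.

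\textbf{Step 5: the polarized identity.} Once $|\Phi(x)|=1+O(\delta)$ is known, the second estimate follows by applying Toponogov to the triangle $a_i x y$. This gives $\cos|a_iy|$ in terms of $\cos|a_ix|$, $\cos|xy|$ and the angle at $x$, up to $O(\delta)$. Summing over $i$ and using the almost-orthonormal frame at $x$ from Step 3, we get $\sum_i\cos|a_ix|\cos|a_iy|=\cos|xy|+O(\delta)$, i.e. the spherical law of cosines in the frame.
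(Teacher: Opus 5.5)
There is a genuine gap, and it starts in Step 3. At an arbitrary $x\in\Sigma$ the directions $\uparrow_x^{a_i}$ are \emph{not} almost orthogonal. In the round $S^{n-1}$ with $a_i=e_i$ one has $\cos\angle a_ixa_j=-x_ix_j/\sqrt{(1-x_i^2)(1-x_j^2)}$, so at the midpoint $x$ of $a_1a_2$ the angle $\angle a_1xa_2$ equals $\pi$. Moreover, an $(n-2)$-dimensional $\Sigma_x$ cannot carry $n$ almost orthogonal directions with almost antipodes at all: that would be an $(n,C\delta)$ strainer in dimension $n-2$. What is close to $\pi/2$ is $|a_ia_j|$, not the comparison angle $\tilde{\angle}a_ixa_j$ at $x$, and Toponogov only bounds angles from below. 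Even $\angle a_ixb_i\ge\pi-C\delta$ fails when $|a_ix|\sim\delta$, where the comparison angle can be near $2\pi/3$. So the frame at $x$ that Steps 3 and 5 rely on does not exist. In Step 5 there is a second problem: Toponogov in the triangle $a_ixy$ gives only a one-sided inequality, not $\cos|a_iy|$ up to $O(\delta)$.

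Step 4 does not fill the hole. Wherever $\angle a_ixb_i\approx\pi$, every $v\in\Sigma_x$ satisfies $\angle(v,\uparrow_x^{a_i})+\angle(v,\uparrow_x^{b_i})\le\pi+O(\delta)$, so a direction obtuse by $c$ to all $2n$ directions cannot exist. In any case, a first-order increase of $|a_ix|+|b_ix|$ would not contradict the bound $\pi+\delta$. The fallback is not an argument either, for three reasons. First, the $(1+C\delta)$-Lipschitz bound for $\Phi$ amounts to $\sum_i\sin^2|a_ix|\cos^2\angle(v,\uparrow_x^{a_i})\le 1+C\delta$, which is an infinitesimal version of what you are trying to prove. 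Second, the Grove--Petersen radius bound needs a volume hypothesis, not a strainer. Third, an upper Lipschitz bound together with large radius gives no lower bound on $|\Phi|$; that would require co-Lipschitz control.

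The missing idea, which is how the paper argues, is to put the vertex at a strainer point instead of at $x$ and to induct on $n$, with base case $n=2$ (a circle). Choose the last index so that $|a_nx|$ stays away from $0$ and $\pi$; this is possible because at most one pair $a_i,b_i$ comes within $\pi/5$ of $x$. The spherical law of cosines in $\tilde{\Delta}a_ia_nx$, with $|a_ia_n|=\pi/2+O(\delta)$, gives $\cos|a_ix|=\sin|a_nx|\cos\tilde{\angle}a_ia_nx+O(\delta)$ for $i<n$. At $a_n$, the sets $\Uparrow_{a_n}^{a_i},\Uparrow_{a_n}^{b_i}$ for $i<n$ form an $(n-1,C\delta)$ strainer of the $(n-2)$-dimensional space $\Sigma_{a_n}$, which has curvature $\ge 1$. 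This is exactly the dimension count the induction needs.

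Comparison cosines at $a_n$ agree with actual ones up to $O(\delta)$. This follows from three facts: $\angle\ge\tilde{\angle}$; $\angle a_ia_nx+\angle b_ia_nx\le 2\pi-\angle a_ia_nb_i\le\pi+O(\delta)$; and $\cos\tilde{\angle}a_ia_nx+\cos\tilde{\angle}b_ia_nx=O(\delta)$, which comes from $\cos|a_ix|+\cos|b_ix|=O(\delta)$ and the lower bound on $\sin|a_nx|$. The induction hypothesis in $\Sigma_{a_n}$ then gives $\sum_{i<n}\cos^2|a_ix|=\sin^2|a_nx|+O(\delta)$, which is the first estimate.

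For the polarized estimate, the same reduction plus Toponogov at $a_n$ ($\angle xa_ny\ge\tilde{\angle}xa_ny$) yields only $\sum_i\cos|a_ix|\cos|a_iy|\le\cos|xy|+O(\delta)$. The reverse inequality comes from applying this to an almost antipode $z$ of $x$, using $\cos|a_iz|=-\cos|a_ix|+O(\delta)$ and $\cos|zy|=-\cos|xy|+O(\delta)$.
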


\begin{proof}
	If $n=2$, 
	\begin{equation}
		||a_1a_2|-\frac{\pi}{2}|<\delta,\ |a_1b_1|>\pi-\delta, \ |a_2b_2|>\pi-\delta.
	\end{equation}
	Then $\Sigma$ is a circle $S^1(r)$ with $r>1-\delta$. 
	
	For $n=2$, since there exists a $1+C\delta$ bi-Lipschitz homeomorphism F from $\Sigma$ to $S^1(r)$,  
	\begin{equation}\label{Sumcos:dim2}
		||F(a_1)F(a_2)|-\frac{\pi}{2}|<C_1\delta, \ |\cos|xy|-\cos|F(x)F(y)||<C_2\delta \ for \ x,y \in \Sigma.
	\end{equation}
	Since for any $a,b\in S^1(r)$, if $|ab|=\frac{\pi}{2}$, then 
	\begin{equation}\label{Sumcos:S1}
		\cos|ax|\cos|bx|+\cos|ay|\cos|by|=\cos |xy|.
	\end{equation}
	By \eqref{Sumcos:S1} and \eqref{Sumcos:dim2}, dimension 2 holds.
	W.L.O.G, assume that 
	\begin{equation}
		|a_nx|>\frac{\pi}{5}.
	\end{equation}
	Consider the comparison triangle $\tilde{\Delta}a_i a_n x$, then 
	\begin{equation}
		\cos |a_ix|=\cos|a_ia_n|\cos|a_n x|+\sin|a_ia_n|\sin|a_n x|\cos \tilde{\angle} a_ia_n x.
	\end{equation}
	Since 
	\begin{equation}
		||a_ia_n|-\frac{\pi}{2}|<3\delta,
	\end{equation}
	\begin{equation}\label{Sumcos:cos=sintildecos}
		|\cos|a_ix|-\sin |a_n x| \cos \tilde{\angle} a_ia_n x|<5\delta.
	\end{equation}
	Then by the two inequalities above,
	\begin{equation}\label{Sumcos:-sumtilde}
		|\sum_{i=1}^{n-1}\cos^2|a_ix|-\sin^2|a_nx| \sum_{i=1}^n \cos^2 \tilde{\angle} a_ia_n x|<c_1(n)\delta.
	\end{equation}
	
	\textbf{Next}, we will prove that 
	\begin{equation}
		|\angle a_ia_nx-\tilde{\angle}a_ia_nx|<100\delta,\ |\angle b_ia_nx-\tilde{\angle}b_i a_n x|<100\delta.
	\end{equation}
	Since 
	\begin{equation}
		|a_ix|+|b_ix|\ge |a_ib_i|>\pi-\delta
	\end{equation}
	and 
	\begin{equation}
		\begin{array}{ll}
			|a_ix|+|b_ix|&\le 2\pi-|a_ib_i| \\
			&\le \pi+\delta.
		\end{array}
	\end{equation}
	Then
	\begin{equation}
		\begin{array}{ll}
			-|a_ix|+\pi-\delta
			&<|b_ix|\\
			&<-|a_ix|+\pi+\delta,
		\end{array}
	\end{equation}
	then 
	\begin{equation}\label{Sumcos:cosaix+cosbix}
		|\cos |a_ix|+\cos |b_ix||<10\delta.
	\end{equation}
	Since $|a_ix|>\frac{\pi}{5}$, then by \eqref{Sumcos:-sumtilde} and \eqref{Sumcos:cosaix+cosbix},
	\begin{equation}\label{Sumcos:<60delta}
		|\cos \tilde{\angle}a_ia_nx+\cos \tilde{\angle} b_ia_n x|<60\delta.
	\end{equation}
	W.L.O.G, suppose that 
	\begin{equation}
		\tilde{\angle} b_i a_n x \ge \tilde{\angle} a_i a_n x.
	\end{equation}
	Then 
	\begin{equation}
		\tilde{\angle} b_i a_n x\ge \frac{\pi}{2}-100\delta.
	\end{equation}
	We claim that 
	\begin{equation}\label{Sumcos:comangle+>}
		\tilde{\angle} b_i a_n x+\tilde{\angle} a_i a_n >\pi -100\delta.
	\end{equation}
	In fact, if not, then 
	\begin{equation}
		\begin{array}{ll}
			\cos \tilde{\angle} a_i a_n x&>\cos(\pi-100\delta-\tilde{\angle} b_i a_n x)\\
			&=-\cos(\tilde{\angle} b_i a_n x+100\delta)\\
			&=-\cos \tilde{\angle} b_i a_n x \cos 100\delta+\sin \tilde{\angle} b_i a_n x \sin 100\delta.
		\end{array}
	\end{equation}
	Then 
	\begin{equation}
		\cos \tilde{\angle} a_i a_n x+\cos \tilde{\angle} b_i a_n x >80 \delta,
	\end{equation}
	this contradicts to \eqref{Sumcos:<60delta}.
	
	Consider the comparison angle $\tilde{\Delta} a_i a_n b_i$, then 
	\begin{equation}
		\cos|a_ib_i|=\cos|a_ia_n|\cos|b_ia_n|+\sin|a_ia_n|\sin|b_ib_n|\cos \tilde{\angle} a_i a_n b_i.
	\end{equation}
	Then 
	\begin{equation}
		\tilde{\angle} a_ia_n b_i >\pi-10\delta.
	\end{equation}
	So
	\begin{equation}\label{Sumcos:realangle+<}
		\begin{array}{ll}
			\angle b_i a_n x+\angle a_i a_n x
			&<2\pi-\tilde{\angle}a_ia_nb_i\\
			&<\pi+10\delta.
		\end{array}
	\end{equation}
	By \eqref{Sumcos:comangle+>} and \eqref{Sumcos:realangle+<},
	\begin{equation}\label{Sumcos:realcomclose}
		\angle a_i a_n x-\tilde{\angle} a_i a_n x<100\delta,\ \angle b_i a_n x-\tilde{\angle} b_i a_n x<100\delta\ 
	\end{equation}
	\begin{equation}
		\cos^2 \angle a_i a_n x-\cos^2 \tilde{\angle} a_i a_n x<500\delta.\ 
	\end{equation}
	Then 
	\begin{equation}
		|\sum_{i=1}^{n-1}\cos^2|a_ix|-\sin^2|a_nx| \sum_{i=1}^n \cos^2 \angle a_ia_n x|<c_2(n)\delta.
	\end{equation}
	Consider $\Sigma_{a_n}$, 
	\begin{equation}
		\cos|a_ia_j|=\cos|a_ia_n|\cos|a_ja_n|+\sin|a_ia_n|\sin|a_ja_n|\cos \tilde{\angle}a_ia_na_j.
	\end{equation}
	Then 
	\begin{equation}\label{Sumcos:sumcos-sinsum}
		\cos \tilde{\angle} a_i a_n a_j<10\delta\ and \ \tilde{\angle} a_i a_n a_j>\frac{\pi}{2}-20\delta.
	\end{equation}
	
	$\Uparrow_{a_n}^{a_i},\Uparrow_{a_n}^{b_i}$ form an $(n,100\delta)$ strainer. By induction hypothesis,
	\begin{equation}\label{Sumcos:induction}
		|\sum_{i=1}^{n-1}\cos^2|\Uparrow_{a_n}^{a_i}\uparrow_{a_n}^x|-1|<c_3(n)\delta.
	\end{equation}
	Then by \eqref{Sumcos:sumcos-sinsum} and \eqref{Sumcos:induction},
	\begin{equation}
		|\sum_{i=1}^{n-1}\cos^2|a_ix|-\sin^2|a_nx||<c_4\delta.
	\end{equation}
	Then
	\begin{equation}
		\begin{array}{ll}
			|\sum_{i=1}^n\cos^2|a_ix|-1|&=|\cos^2|a_nx|+\sum_{i=1}^{n-1}\cos^2|a_ix|-1|\\
			&<C(n)\delta.
		\end{array}
	\end{equation}
	Hence we have proved the first inequality of this theorem. Next, we will prove the second inequality. By \eqref{Sumcos:realcomclose} and \eqref{Sumcos:cos=sintildecos},
	\begin{equation}
		|\sum_{i=1}^{n-1}\cos |a_ix|\cos|a_iy|-\sin |a_nx| \sin |a_ny|\sum_{i=1}^{n-1}\cos \angle a_i a_nx \cos \angle a_ia_ny|<c(n)\delta.
	\end{equation}
	Consider $\Sigma_{a_n}$, by induction hypothesis, 
	\begin{equation}
		|\sum_{i=1}^{n-1}\cos \angle a_i a_nx \cos \angle a_ia_ny-\cos\angle xa_n y|<c(n)\delta.
	\end{equation}
	Then by the above two inequalities,
	\begin{equation}
		\begin{array}{ll}
			\sum_{i=1}^n \cos|a_ix|\cos|a_iy|
			&=\sum_{i=1}^{n-1}\cos|a_ix|\cos|a_iy|+\cos|a_nx|\cos|a_ny|\\
			&\le \cos|a_nx|\cos|a_ny|+\sin |a_nx|\sin |a_ny|\cos \angle x a_n y+ c(n)\delta\\
			&\le \cos|a_nx|\cos|a_ny|+\sin |a_nx|\sin |a_ny|\cos \tilde{\angle} x a_n y+ c(n)\delta\\
			&=\cos |xy|+ c(n)\delta.
		\end{array}
	\end{equation}
	Let $z\in \Sigma$ such that 
	\begin{equation}
		|xz|\ge \pi(1-\delta).
	\end{equation}
	Then 
	\begin{equation}\label{Sumcos:xyzy}
		\pi(1-\delta)\le |xz|\le |xy|+|zy|\le 2\pi-|xz|\le \pi(1+\delta)
	\end{equation}
	and 
	\begin{equation}\label{Sumcos:xaiz}
		\pi(1-\delta)\le |xz|\le |xa_i|+|za_i|\le 2\pi-|xz|\le \pi(1+\delta).
	\end{equation}
	By the same arguments as above, we can prove that 
	\begin{equation}\label{Sumcos:sum<}
		\sum_{i=1}^n\cos|a_iz|\cos|a_iy|\le \cos|yz|+c(n)\delta.
	\end{equation}
	By \eqref{Sumcos:xaiz} and \eqref{Sumcos:xyzy}, we have 
	\begin{equation}\label{Sumcos:cos+cos<10}
		|\cos|zy|+\cos|xy||<10\delta, \ |\cos |za_i|+\cos |xa_i||<10\delta.
	\end{equation}
	By \eqref{Sumcos:cos+cos<10} and \eqref{Sumcos:sum<}, we have 
	\begin{equation}
		\sum_{i=1}^n -\cos|a_ix|\cos|a_iy| \le -\cos |yz|+c_1(n)\delta.
	\end{equation}
	Then 
	\begin{equation}\label{Sumcos:sum>}
		\sum_{i=1}^n \cos|a_ix|\cos|a_iy| \ge \cos |yz|-c_1(n)\delta.
	\end{equation}
	By \eqref{Sumcos:sum<} and \eqref{Sumcos:sum>}, we have 
	\begin{equation}
		|\sum_{i=1}^n \cos|a_ix|\cos|a_iy|-\cos |yz||\le c_1(n)\delta 
	\end{equation}
	
\end{proof}

\begin{thm}
	Let X be an n dimensional Alexandrov space with curvature $\ge k$. If p is a point with an $(n,\delta)$ strainer $(A_i,B_i)_{i=1}^n$ with $|A_ip|\ge l, |B_ip| \ge l$. Then for $r_0<\delta l$, there exists a $1\pm C(n)\delta$ bi-Lipschitz homeomorphism from $B(p,r_0)$ onto a domain of $R^n$. 
\end{thm}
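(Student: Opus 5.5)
The map will be the one of \cite{Burago1992ad},
\begin{equation*}
F(x)=(|A_1x|,\dots,|A_nx|),
\end{equation*}
or its averaged version if the $A_i$ are sets. The plan is to track the error term $\kappa(\delta)$ explicitly and show it is linear in $\delta$. After rescaling we may take $l=1$. The curvature assumption $\ge k$ only enters through comparison angles at scale $r_0<\delta l$. Replacing $k$ by $0$ there changes comparison angles by $O(|k|r_0^2)$, which is absorbed into $C(n)\delta$ once $r_0$ is small, or after rescaling so that $|k|l^2$ is bounded.

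\textbf{Step 1 (strainer propagates).} Every $x\in B(p,2r_0)$ is again $(n,C\delta)$-strained by the same $(A_i,B_i)$. Since $|px|<2\delta l$, each comparison angle at $x$ differs from the one at $p$ by at most $O(|px|/l)=O(\delta)$, by continuity of comparison angles in the vertex.

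\textbf{Step 2 (angles are almost orthonormal).} For $x$ as in Step 1, the strainer directions $\xi_i=\uparrow_x^{A_i}$ and $\eta_i=\uparrow_x^{B_i}$ in $\Sigma_x$ satisfy
\begin{equation*}
|\xi_i\eta_i|>\pi-C\delta,\qquad \bigl||\xi_i\xi_j|-\tfrac{\pi}{2}\bigr|<C\delta .
\end{equation*}
This uses the same argument as in the proof of Theorem \ref{Thm:Sumcos}: the sum of angles is at most $2\pi-\tilde\angle$, and comparison bounds the angles from below. So $\{(\xi_i,\eta_i)\}$ is an $(n,C\delta)$ strainer of the $(n-1)$-dimensional space $\Sigma_x$, which has curvature $\ge1$. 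Theorem \ref{Thm:Sumcos} then gives, for all $v,w\in\Sigma_x$,
\begin{equation*}
\Bigl|\sum_{i=1}^n\cos|\xi_iv|\cos|\xi_iw|-\cos|vw|\Bigr|<C(n)\delta .
\end{equation*}

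\textbf{Step 3 (two-point estimate).} Take $y,z\in B(p,r_0)$ and a geodesic $yz$, and apply the first variation formula to $|A_i\cdot|$ along it. Since $|yz|\le 2r_0<2\delta l$, the first-order term $-|yz|\cos|\xi_i\uparrow_y^z|$ approximates $|A_iz|-|A_iy|$ up to $O(|yz|\,\delta)$. The upper bound on this error comes from $\frac{1}{l}$-semiconcavity of distance functions away from $A_i$, giving an error of order $|yz|^2/l\le |yz|\delta$. The lower bound uses the opposite point $B_i$: the sum $|A_iz|+|B_iz|$ is almost constant along the segment, because angles at $y$ to $A_i$ and $B_i$ are almost supplementary; this is the standard \cite{Burago1992ad} trick. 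Summing squares and using Step 2 with $v=w=\uparrow_y^z$, we get
\begin{equation*}
|F(y)F(z)|^2=\sum_i\bigl(|A_iz|-|A_iy|\bigr)^2=|yz|^2\bigl(1\pm C(n)\delta\bigr).
\end{equation*}
This gives the bi-Lipschitz bounds with constant $1\pm C(n)\delta$.

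\textbf{Step 4 (image is a domain).} $F$ is injective by Step 3. Invariance of domain does not directly apply, since $X$ is not yet known to be a manifold. Instead, openness of $F$ follows from the fact that $F$ is co-Lipschitz open: for each coordinate, moving toward $A_i$ or $B_i$ changes $f_i$ at rate about $\mp1$, while the other coordinates change at rate $O(\delta)$. The quantitative version is that $F$ is $\tfrac12$-open on $B(p,r_0)$. Together with injectivity, openness makes $F$ a homeomorphism onto an open set.

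\textbf{Main obstacle.} The hardest step is Step 3's lower bound. It needs the error in the first-variation approximation to be $O(\delta|yz|)$ uniformly, not merely $o(|yz|)$. I would first try to obtain it from comparison along $B_i$ combined with the choice $r_0<\delta l$, which makes all curvature-induced errors $O(r_0/l)=O(\delta)$.
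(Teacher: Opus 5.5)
Your route is essentially the paper's. Both use the distance-coordinate map and the one-coordinate estimate $\frac{|A_iz|-|A_iy|}{|yz|}=-\cos\angle(\uparrow_y^z,\Uparrow_y^{A_i})+O(\delta)$, squared, summed and combined with Theorem~\ref{Thm:Sumcos} on $\Sigma_y$. The paper takes the one-coordinate estimate from Lemma~5.6 of \cite{Burago1992ad} and the homeomorphism from \cite{Burago1992ad}, where you argue directly. The point of this version over \cite{Burago1992ad} is that the error is \emph{linear} in $\delta$, and two of your steps, as justified, do not give that.

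\textbf{Step 1.} Continuity in the vertex handles the conditions near $\pi/2$, but not $\tilde\angle A_ixB_i\ge\pi-C\delta$. Near $\pi$ the comparison angle is only $\frac12$-H\"older in the side lengths. With $e(x)=|A_ix|+|B_ix|-|A_iB_i|$ one has $(\pi-\tilde\angle A_ixB_i)^2\approx 2e(x)\bigl(|A_ix|^{-1}+|B_ix|^{-1}\bigr)$. The triangle inequality only gives $e(x)\le e(p)+2|px|$. For $|px|\sim\delta l$ this yields only $\pi-\tilde\angle A_ixB_i=O(\sqrt\delta)$, and then Theorem~\ref{Thm:Sumcos} gives only $1\pm C\sqrt\delta$. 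Your conclusion is true (the paper simply asserts it), but it needs $e(x)-e(p)=O(\delta^2l)$. That comes from hinge comparison at $p$: $|A_ix|\le|A_ip|-|px|\cos\alpha+|px|^2/l$, and likewise for $B_i$ with $\beta$. Here $\alpha=\angle A_ipx$ and $\beta=\angle B_ipx$ satisfy $\alpha+\beta\le 2\pi-\angle A_ipB_i\le\pi+\delta$ in $\Sigma_p$. Hence $-\cos\alpha-\cos\beta\le\delta$, and $e(x)\le e(p)+\delta|px|+2|px|^2/l=e(p)+O(\delta^2 l)$.

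\textbf{Step 3.} For $g=|A_i\cdot|+|B_i\cdot|$, near-supplementary angles at $y$ plus semiconcavity bound $g(z)$ only from above; semiconcavity never bounds distance functions from below. The lower bound obtained from $B_i$ at the strainer scale is $g(z)\ge|A_iB_i|=g(y)-e(y)$. It costs $e(y)/|yz|\sim\delta^2l/|yz|$, which is unbounded as $|yz|\to0$, so $r_0<\delta l$ does not rescue nearby pairs. Instead, run the same estimate from the other endpoint. By the corrected Step 1, $z$ is strained, so $\angle(\uparrow_z^y,\Uparrow_z^{A_i})+\angle(\uparrow_z^y,\Uparrow_z^{B_i})\le\pi+C\delta$. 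Hinge comparison at $z$ then gives $g(y)\le g(z)+C\delta|yz|+2|yz|^2/l$. Subtract the hinge bound at $y$, $|B_iz|-|B_iy|\le-|yz|\cos\angle(\uparrow_y^z,\Uparrow_y^{B_i})+|yz|^2/l$, and use near-supplementarity at $y$ together with $|yz|<2\delta l$. This gives $|A_iz|-|A_iy|\ge-|yz|\cos\angle(\uparrow_y^z,\Uparrow_y^{A_i})-C\delta|yz|$, which is exactly the cosine form the paper imports from \cite{Burago1992ad}.

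Separately, $|k|l^2$ is scale-invariant, and the relevant triangles have sides of order $l$, not $r_0$, so $k$ cannot be rescaled away. The paper's proof uses the Euclidean law of cosines, i.e.\ it proves the case $k=0$. For $k<0$ the constant must depend on a bound for $|k|l^2$; a large hyperbolic ball is otherwise a counterexample.
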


\begin{proof}
	
	Consider the map, for $x\in B(p,r_0)$, 
	\begin{equation}
		F(x)=(|A_1x|-|A_1p|,...,|A_nx|-|A_np|).
	\end{equation}
	Then by \cite{Burago1992ad}, $F$ is a $1\pm \kappa(\delta)$ bi-Lipschitz homeomorphism.  For $x,y\in B(p,r_0)$, 
	\begin{equation}\label{BLB:F(y)-F(x)}
		F(y)-F(x)=(|A_1y|-|A_1x|,...,|A_ny|-|A_nx|).
	\end{equation}
	For $1\le i \le n$, consider the comparison angles $\tilde{\angle} A_ixy$, then 
	\begin{equation}
		|A_iy|^2=|A_ix|^2+|xy|^2-2|A_ix||xy|\cos \tilde{\angle} A_ixy.
	\end{equation}
	\begin{equation}
		\frac{|A_iy|-|A_ix|}{|xy|}=-\frac{2|A_ix|}{|A_ix|+|A_iy|}\cos \tilde{\angle} A_ixy +\frac{|xy|}{|A_ix|+|A_iy|}.
	\end{equation}
	Then 
	\begin{equation}\label{BLB:frac+costilde}
		|\frac{|A_iy|-|A_ix|}{|xy|}+\cos \tilde{\angle} A_i xy|<10\delta.
	\end{equation}
	Note that $\{(A_i,B_i)\}_{i=1}^n$ is an $(n,5\delta)$ strainer for $x\in B(p,r_0)$. By Lemma 5.6 of \cite{Burago1992ad},
	\begin{equation}
		\angle A_ixy<\tilde{\angle}A_ixy+20\delta.
	\end{equation}
	Then 
	\begin{equation}\label{BLB:realcom}
		|\cos \angle A_ixy-\cos \tilde{\angle} A_ixy|<30\delta.
	\end{equation}
	By \eqref{BLB:frac+costilde} and \eqref{BLB:realcom},
	\begin{equation}\label{BLB:frac+cos}
		|\frac{|A_iy|-|A_ix|}{|xy|}+\cos \angle A_ix y|<50\delta.
	\end{equation}
	By \eqref{BLB:frac+cos} and \eqref{BLB:F(y)-F(x)},
	\begin{equation}\label{BLB:frac-sumcos}
		|\frac{|F(y)-F(x)|^2}{|xy|^2}-\sum_{i=1}^n \cos^2 \angle A_ixy|<c_1(n)\delta.
	\end{equation}
	Consider $\Sigma_x$, then $(\Uparrow_x^{A_i},\Uparrow_x^{B_i})$ is an $(n,5\delta)$ strainer in $\Sigma_x$. By theorem \ref{Thm:Sumcos},
	\begin{equation}\label{BLB:sumcos}
		|\sum_{i=1}^n \cos\angle A_ixy-1|<c_2(n)\delta.
	\end{equation}
	By \eqref{BLB:frac-sumcos} and \eqref{BLB:sumcos},
	\begin{equation}
		|\frac{|F(y)-F(x)|}{|xy|}-1|<c_3(n)\delta.
	\end{equation}
\end{proof}
\begin{thm}\label{Thm:C(Sigma) BL}
	There exists constant $C(n)$. Let $\Sigma$ be an n-1 dimensional Alexandrov space with curvature $\ge 1$. Let $C(\Sigma)$ be the cone over $\Sigma$. If $\Sigma$ has an $(n,\delta)$ strainer $\{(a_i,b_i)\}_{i=1}^n$. Then There exists a $1\pm C(n) \delta$ bi-Lipschitz map from $B(p,r_0)\subset C(\Sigma)$ onto a subset of $R^n$.
\end{thm}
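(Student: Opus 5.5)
The plan is to write down an explicit map and check it directly, using the cone structure. No strainer argument inside $C(\Sigma)$ is needed. For $x=(t,\xi)\in C(\Sigma)$ with $p$ the apex, define
\begin{equation}
F(x)=\bigl(t\cos|a_1\xi|,\dots,t\cos|a_n\xi|\bigr)\in \bR^n,\qquad F(p)=0 .
\end{equation}
This is the Euclidean analogue of the coordinate map: on $S^{n-1}$ with $a_i=e_i$, $F$ is the identity of $\bR^n$. $F$ is homogeneous of degree one, so the statement holds for every $r_0$ and it suffices to work on all of $C(\Sigma)$.

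\textbf{Step 1: reduce to Theorem \ref{Thm:Sumcos}.} For $x=(s,\xi)$ and $y=(t,\zeta)$ expand the squared distance:
\begin{equation}
|F(x)-F(y)|^2=s^2\sum_i\cos^2|a_i\xi|+t^2\sum_i\cos^2|a_i\zeta|-2st\sum_i\cos|a_i\xi|\cos|a_i\zeta| .
\end{equation}
Theorem \ref{Thm:Sumcos} gives $|\sum_i\cos^2|a_i\cdot|-1|<C\delta$ and $|\sum_i\cos|a_i\xi|\cos|a_i\zeta|-\cos|\xi\zeta||<C\delta$. Writing $\theta=\min\{|\xi\zeta|,\pi\}$, the cone metric is $|xy|^2=s^2+t^2-2st\cos\theta$. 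Hence
\begin{equation}
\bigl||F(x)-F(y)|^2-|xy|^2\bigr|\le C(n)\delta(s^2+t^2).
\end{equation}
This already gives the upper Lipschitz bound whenever $|xy|\gtrsim s+t$, and the lower bound in the same range.

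\textbf{Step 2: handle nearby points (the main obstacle).} The additive error $C\delta(s^2+t^2)$ is useless when $|xy|\ll s+t$. There I would work infinitesimally, or at scale, away from the apex. Since $F$ is homogeneous, it suffices to treat $s=1$ with $|xy|$ small. Then split the displacement into a radial part $t-s$ and an angular part on $\Sigma$. The radial part is exactly isometric, because $F(t\xi)=tF(1,\xi)$ and $|F(1,\xi)|=1\pm C\delta$. The angular part is controlled by the restriction $G(\xi)=(\cos|a_i\xi|)_i$ on $\Sigma$. So I need $G$ to be a $1\pm C\delta$ bi-Lipschitz map onto its image near the unit sphere, \emph{locally}. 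For this I would redo the argument of the preceding theorem, the one for $B(p,r_0)$ in an $n$-dimensional space, at a point $\xi\in\Sigma$. The first variation gives $\frac{\cos|a_i\zeta|-\cos|a_i\xi|}{|\xi\zeta|}\approx \sin|a_i\xi|\cos\angle a_i\xi\zeta$ up to $C\delta$, using the real-versus-comparison angle estimate of Lemma 5.6 of \cite{Burago1992ad} as in that proof. The induction step in Theorem \ref{Thm:Sumcos} applied in $\Sigma_\xi$ then gives $\sum_i\sin^2|a_i\xi|\cos^2\angle a_i\xi\zeta=1\pm C\delta$. The delicate points are directions $\xi$ near some $a_i$ or $b_i$, where $\sin|a_i\xi|$ degenerates. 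I would avoid them as in Theorem \ref{Thm:Sumcos}, by expressing everything relative to an $a_n$ with $|a_n\xi|>\pi/5$. I would also check that the radial and angular error terms combine orthogonally up to $C\delta$, so the infinitesimal bi-Lipschitz bound holds uniformly.

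\textbf{Step 3: pass from local to global and conclude.} Integrating the local estimate along geodesics gives the upper bound $|F(x)F(y)|\le(1+C\delta)|xy|$ for all pairs. For the lower bound, combine the local estimate with Step 1: below a fixed ratio $|xy|/(s+t)$ use the local bound, and above it use Step 1. Injectivity follows from the lower bound. Since $F$ is a bi-Lipschitz embedding of the $n$-dimensional space $C(\Sigma)$, invariance of domain shows its image is a domain; this is the same topological input used in \cite{Burago1992ad}. Restricting to $B(p,r_0)$ gives the statement.
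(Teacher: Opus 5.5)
Your map $F(s,\xi)=s\,(\cos|a_i\xi|)_i$ is a good choice, and Step 1 is correct. Step 2, however, does not work as planned, and Step 3 inherits the problem.

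\textbf{The key identity is not justified.} The tight-frame identity $\sum_i\sin^2|a_i\xi|\cos^2\angle a_i\xi\zeta=1\pm C\delta$ does not follow from the induction step of Theorem~\ref{Thm:Sumcos} ``applied in $\Sigma_\xi$''. The space $\Sigma_\xi$ is $(n-2)$-dimensional, and the $n$ directions $\uparrow_\xi^{a_i}$ do not form a strainer there. That induction step works only at the special point $a_n$, where $\sin|a_ia_n|\approx 1$ and $\{(\Uparrow_{a_n}^{a_i},\Uparrow_{a_n}^{b_i})\}_{i<n}$ is an $(n-1,C\delta)$-strainer. The same applies to your claim that the radial and angular parts are almost orthogonal. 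That claim needs the derivative-level estimate $\sum_i\cos|a_i\xi|\sin|a_i\xi|\cos\angle a_i\xi\zeta=O(\delta)$. The additive $C\delta$ bounds of Theorem~\ref{Thm:Sumcos} on $\Sigma$ say nothing at scales below $\delta$.

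\textbf{There is a scale gap.} Your first-variation estimate lives in $\Sigma$, where the strainer points are at distance at most $\pi$ from $\xi$. So the real-versus-comparison angle estimate (Lemma 5.6 of \cite{Burago1992ad}, as used in the preceding theorem) is only available for $|\xi\zeta|\lesssim\delta$. Dropping the term $\cos|a_i\xi|\,(1-\cos|\xi\zeta|)/|\xi\zeta|$ also needs $|\xi\zeta|\lesssim\delta$. Hence your local estimate covers only $|xy|\lesssim\delta s$. Step 1 gives a multiplicative $1\pm C\delta$ bound only when $|xy|\gtrsim s+t$. In the range $\delta s\ll|xy|\ll s$ neither estimate applies. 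A bound on the differential integrates along geodesics to the upper Lipschitz bound, but not to the lower one. So the patching in Step 3 fails.

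\textbf{The missing idea is to linearize in the $n$-dimensional cone, not in $\Sigma$; this is what the paper does.} Put $A_i^L=(a_i,L)$ and $B_i^L=(b_i,L)$. The cone formula gives $\bigl||A_i^Lx|-|A_i^Lp|+s\cos|a_i\xi|\bigr|\le 2s^2/(2L-s)$, so $-F$ is the locally uniform limit of the maps $x\mapsto(|A_i^Lx|-|A_i^Lp|)_i$. For $L\ge 100\delta^{-1}r_0$, the pairs $(A_i^L,B_i^L)$ form an $(n,\delta)$-strainer at the apex at distance $L>\delta^{-1}r_0$. By the preceding theorem, these maps are then $1\pm C(n)\delta$ bi-Lipschitz on $B(p,r_0)$. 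The reason is that every pair $x,y\in B(p,r_0)$ is at scale $\le\delta L$ relative to the strainer. Theorem~\ref{Thm:Sumcos} is then applied in the $(n-1)$-dimensional $\Sigma_x$, where $(\Uparrow_x^{A_i^L},\Uparrow_x^{B_i^L})$ is an $(n,5\delta)$-strainer.

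The paper simply takes $L=100\delta^{-1}r_0$ and stops there. Bi-Lipschitz bounds pass to uniform limits, so your homogeneous $F$ is indeed $1\pm C(n)\delta$ bi-Lipschitz on all of $C(\Sigma)$. That is a small bonus over the paper's $r_0$-dependent map, but the proof goes through this limit, not through your Steps 2--3.

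This is also where your identity really comes from. For $x=(1,\xi)$, the space $\Sigma_x$ is the spherical suspension of $\Sigma_\xi$. The limiting direction toward $A_i^L$ sits at polar angle $|a_i\xi|$ above $\uparrow_\xi^{a_i}$. So for a horizontal direction $u$, its cosine with $u$ is $\sin|a_i\xi|\cos\angle(\uparrow_\xi^{a_i},u)$.
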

\begin{proof}
	Let p be the apex of the cone $C(\Sigma)$. 
	Choose points 
	\begin{equation}
		A_i:=(a_i,100\delta^{-1}r_0)\in C(\Sigma), B_i:=(b_i,100\delta^{-1}r_0)\in C(\Sigma).
	\end{equation}
	Then $(A_i,B_i)$ are $(n,\delta)$ strainer for $x\in B(p,r_0)$. Then use the above theorem. 
\end{proof}

\begin{thm}\label{Thm:BiLipfromSigmatoSn-1}
	There exists $C(n)$. Let $\Sigma$ be an n-1 dimensional Alxandrov space with an $(n,\delta)$ strainers $\{(a_i,b_i)\}_{i=1}^n$. Then there exist a $1\pm C(n)\delta$ bi-Lipschitz homeomorphism
	\begin{equation}
		G:\Sigma \to S^{n-1}(1),\ (1-C(n)\delta)|xy|<|G(x)G(y)|<(1+C(n)\delta)|xy|.
	\end{equation}
\end{thm}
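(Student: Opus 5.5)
We build an explicit map from the strainer and reduce both Lipschitz bounds to the two cosine identities of Theorem \ref{Thm:Sumcos}. (The statement implicitly takes $\Sigma$ to have curvature $\ge 1$.) Let $\{(a_i,b_i)\}_{i=1}^n$ be the $(n,\delta)$ strainer. Define
\begin{equation}
\tilde G:\Sigma\to\bR^n,\qquad \tilde G(x)=(\cos|a_1x|,\dots,\cos|a_nx|),
\end{equation}
and set $G=\tilde G/|\tilde G|$. This is the spherical analogue of the coordinate map $F(x)=(|A_ix|-|A_ip|)_i$ used for the Euclidean chart above.

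First, $G$ is well defined and almost isometric at large scales. By the first inequality of Theorem \ref{Thm:Sumcos}, $\big||\tilde G(x)|^2-1\big|<C(n)\delta$, so $|\tilde G|$ is bounded away from $0$ and $|G-\tilde G|<C\delta$. By the second inequality, $\langle \tilde G(x),\tilde G(y)\rangle=\cos|xy|+O(\delta)$. Hence $\cos\angle(G(x),G(y))=\cos|xy|+O(\delta)$. Since $\cos$ is bi-Lipschitz on ranges bounded away from $0$ and $\pi$, this gives $|G(x)G(y)|=|xy|\pm C\delta$. When $|xy|$ is near $\pi$, we instead use a point $z$ with $|xz|\ge\pi(1-\delta)$ (radius estimate) and the triangle-perimeter trick from the proof of Theorem \ref{Thm:Sumcos}. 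This additive bound is not yet multiplicative for small $|xy|$.

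Second, we handle small distances, which is the main obstacle. For $|xy|\le c(n)$ we need $|G(x)G(y)|=(1\pm C\delta)|xy|$. The plan mimics the local theorem just proved. Take the strainer at $x$, so that in $\Sigma_x$ the directions $\Uparrow_x^{a_i}$, together with the direction $\uparrow$ "toward the antipode" of $x$, form a strainer of $\Sigma_x$. Use first variation and the comparison-angle estimates (as in \eqref{BLB:frac+costilde}--\eqref{BLB:realcom}, with spherical cosine law replacing the Euclidean one) to get
\begin{equation}
\cos|a_iy|-\cos|a_ix| = \sin|a_ix|\cos\angle a_ixy\,|xy| + O(\delta|xy|)+O(|xy|^2).
\end{equation}
Then $|\tilde G(y)-\tilde G(x)|^2/|xy|^2=\sum_i\sin^2|a_ix|\cos^2\angle a_ixy+O(\delta)+O(|xy|)$. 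Theorem \ref{Thm:Sumcos} applied in $\Sigma_x$ to the $(n-1)$-dimensional strainer identifies this sum with $1-(\text{radial component})^2$. Subtracting the radial part via the normalization $G=\tilde G/|\tilde G|$ then yields $1+O(\delta)$.

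The $O(|xy|^2)$ error is dangerous, so we localize to scales $|xy|<\delta$ and chain along geodesics; the bound is multiplicative, so chaining works. The delicate point is controlling $\angle a_ixy$ versus the comparison angle uniformly in $y$ by $O(\delta)$, using Lemma 5.6 of \cite{Burago1992ad} as in the previous theorem. Scales between $\delta$ and $c(n)$ are covered by the same chaining, since the infinitesimal estimate is uniform.

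Finally, for bijectivity, $G$ is a local bi-Lipschitz homeomorphism by the local theorem (chart via the cone, Theorem \ref{Thm:C(Sigma) BL}). It is therefore a covering map onto an open and closed subset of $S^{n-1}$, hence onto $S^{n-1}$. For $n-1\ge2$ the sphere is simply connected, so $G$ is a homeomorphism. The case $n=2$ is the circle case already treated.
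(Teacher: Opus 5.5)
Your large-scale step is sound, and your map is essentially the paper's. For $A_i=(a_i,R)\in C(\Sigma)$ one has $|A_ix|-|A_ip|=-\cos|a_ix|+O(1/R)$ on $\Sigma$, so $\tilde G$ is, up to sign, the $R\to\infty$ limit of the paper's cone coordinates $F$. The gap is in the small-scale step, which is the real content of the theorem. There you need the weighted identity $\sum_i\sin^2|a_ix|\cos^2\angle a_ixy=1+O(\delta)$. Your justification for it is false: the directions $\uparrow_x^{a_i}$, together with a direction ``toward the antipode'', do not form a strainer of $\Sigma_x$. First, $\Sigma_x$ is $(n-2)$-dimensional, and the $n$ directions $\uparrow_x^{a_i}$ are not almost orthogonal; already in $S^{n-1}$, $\cos\angle(\uparrow_x^{a_i},\uparrow_x^{a_j})=-\cot|a_ix|\cot|a_jx|$ for $i\neq j$. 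The identity you want says precisely that they form a tight frame with weights $\sin^2|a_ix|$, which is not what Theorem \ref{Thm:Sumcos} asserts. Second, there is no distinguished antipodal direction, since in the model every direction at $x$ leads to $-x$. So there is no ``radial component'' inside $\Sigma_x$ at all. A radial direction exists only in the space of directions of $C(\Sigma)$ at $(x,1)$, which is the spherical suspension of $\Sigma_x$. That is the missing idea, and it is exactly the paper's route. The points $A_i=(a_i,100\delta^{-1})$, $B_i=(b_i,100\delta^{-1})$ form an honest $(n,C\delta)$-strainer at every point of $B(p,2)\subset C(\Sigma)$, so $F$ is $1\pm C\delta$ bi-Lipschitz there. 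Theorem \ref{Thm:Sumcos} applied in $\Sigma_xC(\Sigma)$, with the apex direction $\uparrow_x^p$ as the radial one, gives $|\angle oF(x)F(y)-\angle pxy|<C\delta$. The law of sines then yields $|G(x)G(y)|=(1\pm C\delta)|xy|_\Sigma$ for all $|xy|_\Sigma\le\frac1{10}$ at once.

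Two further steps fail as written, and both disappear once the reference points are far away. First, with $a_i,b_i$ at distance $O(1)$, the pair $(a_i,b_i)$ is only a $(1,C\delta/\sin|a_ix|)$-strainer at $x$. Also, the angle comparison $\angle a_ixy-\tilde\angle a_ixy=O(\delta)$ is available only for $|xy|\lesssim\delta\min(|a_ix|,|b_ix|)$. So your expansion with error $O(\delta|xy|)$ is not uniform near $a_i,b_i$; it would have to be rescued by the weight $\sin|a_ix|$ with an $x$-dependent scale, which you have not done. In the cone, $|xy|/|A_ix|=O(\delta)$ uniformly on $B(p,2)$, so no localization is needed. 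Second, chaining along geodesics of $\Sigma$ only propagates the upper bound $|G(x)G(y)|\le(1+C\delta)|xy|$. The lower bound for $\delta\lesssim|xy|\lesssim c(n)$ does not follow by summing, and your additive cosine estimate gives only relative error $O(\delta/|xy|^2)$ in that range. You would first have to show that $G$ is a homeomorphism, using your own local lower bound and invariance of domain; the cone chart near the apex concerns the paper's map, not yours. Only then can you chain along geodesics of $S^{n-1}$ using the local Lipschitz bound for $G^{-1}$.
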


\begin{proof}
	Consider the cone $C(\Sigma)$ with apex p and the points 
	\begin{equation}
		A_i:=(a_i,100\delta^{-1}),B_i:=(b_i,100\delta^{-1}), 1\le i \le n.
	\end{equation}
	Let 
	\begin{equation}
		F:B(0,2)\subset C(\Sigma) \to R^n,\ F(x)=(|A_1x|-|A_1p|,...,|A_nx|-|A_np|).
	\end{equation}
	Then by theorem \ref{Thm:C(Sigma) BL}, $F$ is a $1\pm C(n)\delta$ bi-Lipschitz homeomorphism and 
	\begin{equation}
		F(p)=o\in R^n.
	\end{equation}
	Consider the map 
	\begin{equation}
		G: \Sigma \subset C(\Sigma) \xrightarrow{F} R^n \overset{P}{\to} S^{n-1}(1),
	\end{equation}
	where $P:R^n \to S^{n-1}(1)$ is the natural projection. 
	
	For $x,y \in \Sigma$, let
	\begin{equation}
		t:=|xy|_{\Sigma}.
	\end{equation}
	Then
	\begin{equation}
		|xy|_{C(\Sigma)}=2\sin \frac{t}{2}.
	\end{equation}
	Then 
	\begin{equation}\label{AISigma:1}
		||F(x)F(y)|-2\sin \frac{t}{2}|<c_1(n) \delta.
	\end{equation}
	Note that 
	\begin{equation}
		\angle F(x)oF(y)=|G(x)G(y)|.
	\end{equation}
	By the law of cosines,
	\begin{equation}\label{AISigma:2}
		\cos|G(x)G(y)|=\frac{|pF(x)|^2+|pF(y)|^2-|F(x)F(y)|^2}{2|pF(x)||pF(y)|}.
	\end{equation}
	Since 
	\begin{equation}\label{AISigma:3}
		|pF(x)-1|<c_1(n)\delta, \ ||pF(y)|-1|<c_1(n)\delta.
	\end{equation}
	By \eqref{AISigma:1}, \eqref{AISigma:2} and \eqref{AISigma:3},
	\begin{equation}
		|\cos |G(x)G(y)|-\frac{2-4\sin^2 \frac{t}{2}}{2}|<c_2(n)\delta.
	\end{equation}
	Then 
	\begin{equation}
		|\cos |G(x)G(y)|-\cos t|<c_2(n)\delta.
	\end{equation}
	If 
	\begin{equation}
		|xy|_{\Sigma}=t>\frac{1}{10},
	\end{equation}
	then 
	\begin{equation}
		||G(x)G(y)|-t|<c_3(n)\delta.
	\end{equation}
	Then G is a $1\pm 100c_3(n)\delta$ bi-Lipschitz.
	
	\textbf{So next}, we consider the case 
	\begin{equation}
		|xy|_{\Sigma}=t\le \frac{1}{10}.
	\end{equation}
	In this case 
	\begin{equation}
		\angle pxy=\angle pyx=\frac{\pi}{2}-\frac{t}{2}.
	\end{equation}
	
	\begin{equation}\label{AISigma:4}
		\begin{array}{ll}
			\cos \angle oF(x)F(y)
			&=\frac{1}{|oF(x)||F(x)F(y)|}\overrightarrow{F(x)o}\cdot \overrightarrow{F(x)F(y)}\\
			&=\frac{1\pm c_4(n)\delta}{|px||xy|}(|A_1p|-|A_1x|,...,|A_np|-|A_nx|)(|A_1y|-|A_1x|,...,|A_ny|-|A_nx|)\\
		\end{array}
	\end{equation}
	Since $(A_i,B_i)$ is a $(1,\delta)$ strainer at $x\in B(p,2)$, the comparison angles of $A_i xo$ ($A_ixy$) differ from the real angles not more than $20\delta$, then 
	\begin{equation}\label{AISigma:5}
		|\frac{|A_ip|-|A_ix|}{|px|}+\cos \angle A_i xp |<c_5(n)\delta.
	\end{equation}
	\begin{equation}\label{AISigma:6}
		|\frac{|A_iy|-|A_ix|}{|px|}+\cos \angle A_i xy |<c_5(n)\delta.
	\end{equation}
	Then by \eqref{AISigma:4}, \eqref{AISigma:5} and \eqref{AISigma:6},
	\begin{equation}\label{AISigma:7}
		|\cos \angle oG(x)G(y)-\sum_{i=1}^n \cos A_ixp\cos A_i xy|<c_6(n)\delta.
	\end{equation}
	By theorem \ref{Thm:Sumcos},
	\begin{equation}\label{AISigma:8}
		|\sum_{i=1}^n \cos A_ixp\cos A_ixy-\cos \angle pxy|<c(n)\delta.
	\end{equation}
	By \eqref{AISigma:7} and \eqref{AISigma:8},
	\begin{equation}
		|\cos \angle oG(x)G(y)-\cos \angle pxy|<c_7(n)\delta.
	\end{equation}
	Then 
	\begin{equation}\label{AISigma:9} \
		|\angle oF(x)F(y)-\angle pxy|<c_8(n)\delta.
	\end{equation}
	By the law of sines, 
	\begin{equation}\label{AISigma:10}
		\begin{array}{ll}
			\sin|xy|_{\Sigma}&=\sin \angle xpy\\
			&=\frac{|xy|}{|py|}\cdot \sin \angle pxy
		\end{array}
	\end{equation}
	and 
	\begin{equation}\label{AISigma:11}
		\begin{array}{ll}
			\sin \angle F(x)oF(y)=\frac{|F(x)F(y)|}{|oF(y)|} \sin \angle oF(x)F(y).
		\end{array}
	\end{equation}
	By \eqref{AISigma:9}, and not that $\angle xpy,\angle OF(x)F(y)$ are close to $\frac{\pi}{2}$, then 
	\begin{equation}\label{AISigma:12}
		|\frac{\sin \angle o F(x)F(y)}{\sin \angle xpy}-1|<C_1(n)\delta. 
	\end{equation}
	Since F is $1\pm c(n)\delta$ bi-Lipschitz, by \eqref{AISigma:10}, \eqref{AISigma:11} and \eqref{AISigma:12},
	\begin{equation}
		|\frac{\sin \angle F(x)o F(y)}{\sin \angle xpy}-1|<C_2(n)\delta.
	\end{equation}
	Then 
	\begin{equation}
		|\frac{\angle F(x)oF(y)}{\angle xpy}-1|<C(n)\delta.
	\end{equation}
	That is 
	\begin{equation}
		||\frac{|G(x)G(y)|}{|xy|_{\Sigma}}-1|<C(n)\delta.
	\end{equation}
\end{proof}

\section{Appendix 2}
The inequality \eqref{App:Scal <sum} is easy to get, see \cite{Petpoly2003}. For reader's convenience, we include a proof below.

\begin{proof}
Denote by $v_k=\frac{\nabla f_k(x)}{|\nabla f_k(x)|}$. Then $\{v_k\}_{k=1}^n$ are almost orthogonal for $1\ls k \ls n$. Take a normal basis $\{e_k\}_{k=1}^n$ with $\angle(e_k, v_k)$ small. Since the second fundamental form of $S_k(t)=f_k^{-1}(t)$ is non-negative, for $w_i,w_j\in T_x S_k$, the sectional curvature of the ambient space $sec(w_i,w_j)$ is less than the sectional curvature of $S_k$ (denoted by $sec_S(w_i,w_j))$. Then for $1\ls k\ls n$,
\begin{equation}\label{eq:ap1}
	Scal(x)-2Ric(e_k,e_k)=\sum_{i,j\neq k} sec(e_i,e_j).
\end{equation}
Add k from 1 to n, since $Scal=\sum\limits_{k=1}^n Ric(e_k,e_k)$, we get that
\begin{equation}\label{ls:ap}
	(n-2)Scal= \sum_{k=1}^n \sum_{i,j\neq k}sec(e_i,e_j).
\end{equation}

Denote by $Sc(S_k)$ the scalar curvature w.r.t. the intrinsic metric of $S_k(t)$. Take orthogonal basis $w_1,...,w_{k-1},...,w_{k+1},...w_{n}\in T_x S_{i,k}$, such that $\angle(e_l,w_l)$ are small for $l\neq k$. Then
\begin{equation}\label{gs:level}
	\begin{array}{ll}
		Sc(S_k)(x)&=\sum\limits_{i,j\neq k}sec_S(w_i,w_j)\\
		&\gs \sum\limits_{i,j\neq k}sec(w_i,w_j).
	\end{array}
\end{equation}

We claim that $sec(w_i,w_j)$ is close to $sec(e_i,e_j)$.

Let $w_i=\sum_{k=1}^n a_{i,k}e_k$, then
\begin{equation}\label{in:ak}
	1\gs a_{i,i} \gs 1-\kappa(\delta), \quad |a_{i,k}|<\kappa(\delta) \text{ if } k\neq i.
\end{equation}
\begin{equation}\label{eq:wiwj}
	R(w_i,w_j,w_i,w_j)=a^2_{i,i}a^2_{j,j}R(e_i,e_j,e_i,e_j)+\sum_{(p,q,r,s)\neq(i,j,i,j)}a_{i,p}a_{j,q}a_{i,r}a_{j,s}R(e_p,e_q,e_r,e_s)
\end{equation}
Since $B(p,r_0)$ has non-negative sectional curvature, there exists a constant $c_1>0$ such that
\begin{equation}\label{ls:Rscal}
	|R(e_p,e_q,e_r,e_s)|\ls c_1 Scal.
\end{equation}
By (\ref{in:ak}), (\ref{eq:wiwj}) and (\ref{ls:Rscal}), we have
\begin{equation}\label{ls:ew}
	(1-\kappa(\delta))R(e_i,e_j,e_i,e_j)-\kappa(\delta)Scal \ls R(w_i,w_j,w_i,w_j)
	\ls R(e_i,e_j,e_i,e_j)+\kappa(\delta)Scal.
\end{equation}

Then
\begin{equation}\label{ls:secew}
	(1-\kappa(\delta))sec(e_i,e_j)-\kappa(\delta)Scal \ls sec(w_i,w_j) \ls (1+\kappa(\delta))sec(e_i,e_j)+\kappa(\delta)Scal
\end{equation}
By (\ref{eq:ap1}), (\ref{gs:level}) and (\ref{ls:secew}), we have
\begin{equation}\label{App1:Sc(Sk)>}
	\begin{array}{ll}
		Sc(S_k)(x)&\gs \sum\limits_{i,j\neq k} [(1-\kappa(\delta))sec(e_i,e_j)-\kappa(\delta)Scal]\\
		&=(1-\kappa(\delta))[Scal-2Ric(e_k,e_k)]-\kappa(\delta)Scal.
	\end{array}
\end{equation}
Add k from 1 to n, we have
\begin{equation}\label{gs:main}
	\sum_{k=1}^n Sc(S_k) \gs (1-\kappa(\delta))(n-2-\kappa(\delta))Scal.
\end{equation}

In the proof of Theorem \ref{Theorem:C(Sigma)}, for 
\begin{equation}
h_i=f_i \ or \ g_i, 1\le i \le n,
\end{equation}
since $|h_i|$ is close to 1 and $h_1,h_2,...,h_n$ are almost orthogonal in $\{h_1<-a,...,h_n<-a\}$, repeat the arguments above, we can prove that, there exists $\tilde{c}_0(n)$, such that 
\begin{equation}\label{App1:Scal<sumSchi}
Scal \le \tilde{c}_0(n) \sum_{i=1}^n Sc_{h_i}.
\end{equation}
\end{proof}


\begin{thebibliography}{99}
\bibitem{AleZa1967} A. D. Aleksandrov, V. A. Zalgaller, \emph{Intrinsic geometry of surfaces}, Translation of Mathematical Monographs, Vol. 15. Translated from the Russian by J. M. Danskin. American Mathematical Society, Providence, R.I., 1967.
	\bibitem{AKP2008} S. Alexander, V. Kapovitch, A. Petrunin, \emph{An optimal lower curvature bound for convex hypersurfaces in Riemannian manifolds}, 52(2008), 1031-1033.
	
	\bibitem{BGS1985} W. Ballmann, M. Gromov, V. Schroeder, \emph{Manifolds of nonpositive curvature}, Progress in Math. 61, Birkh"auser, Boston-Basel Stuttgart, 1985.
	
	\bibitem{Burago1992ad} Y. Burago, M. Gromov, G. Perelman, \emph{A.D. Alexandrov spaces with curvature bounded below}, Russian Math.Surveys 47(1992),1-58.
	
	\bibitem{burago2001course} D. Burago, Y. Burago, S. Ivanov, \emph{A course in metric geometry}, vol.33, AMS(2001).
	
	
	\bibitem{GWsub1973} R. E. Green, H. Wu, \emph{On the subharmonicity and plurisubharmonicity of geodesically convex functions}, Indiana Univ. Math. J. 22(1973), 641-653.
\bibitem{GP} K. Grove, P. Petersen V, \emph{Volume comparison a la Alexandrov}, Acta Math. 169(1992), 131-151.

 \bibitem{GromovLarge} M. Gromov, \emph{Large Riemannian manifolds, Curvature and topology of Riemannian manifolds}, (Katata,1985), 108-121, Lecture Notes in Math., 1201, Springer, Berlin, 1986.

\bibitem{LiNanC2} N. Li, \emph{Quantitative estimates on the $C^2$ singular sets in Alexandrov spaces}, https://arxiv.org/abs/2306.03382.
\bibitem{LiNan2026} N. Li, \emph{Bounding curvature measure on manifolds with singularities}, https://arxiv.org/abs/2606.08887.

\bibitem{LiuGangcurvatureintegral} G. Liu, \emph{Complete K\"ahler manifolds with non-negative Ricci curvature}, https://arxiv.org/abs/2404.08537.

\bibitem{Naberconjecture} A. Naber, \emph{Conjectures and Open Questions on the Structure and Regularity of Spaceswith Lower Ricci Curvature Bounds}, Symmetry, Integrability and Geometry: Methods and Applications, SIGMA,16(2020),8 pages.
	
	\bibitem{PetLeb2022} N. Lebedeva, A. Petrunin, \emph{Curvature tensor of smoothable Alexandrov spaces} https://arxiv.org/abs/2202.13420.
	
	\bibitem{PereMorse1993} G. Perelman, \emph{Elements of Morse theory on Aleksandrov spaces}, Algebra i Analiz 5(1993), no. 1,232-241.
	
	\bibitem{Petapp1997} A. Petrunin, \emph{Applications of quasigeodesics and gradient curves}, Comparison geometry (Berkeley, CA, 1993C94), 203C219, Math. Sci. Res. Inst. Publ., 30, Cambridge Univ. Press, Cambridge, 1997.
	\bibitem{Petpoly2003} A. Petrunin, \emph{Polyhedral approximations of Riemannian manifolds}, Turkish J. Math. 27 (2003), no. 1, 173C187.
	\bibitem{petrunin2007semiconcave} A.Petrunin, \emph{Semiconcave functions in Alexandrov geometry}, Surv. Differ. Geom. 11(2007), 137--201.
	\bibitem{Pet2009upper} A. Petrunin, \emph{An upper bound for curvature integral}, St. Petersburg Math. J. 20.2(2009), 255-265.
\bibitem{KapoRegularity} V. Kapovitch \emph{Regularity of limits of noncollapsing sequences of manifolds},
Geom. Funct. Anal. 12(1), 121–137 (2000)
	\bibitem{KLPJEMS} V. Kapovitch, A. Lytchak, A. Petrunin, \emph{Metric-measure boundary and geodesic flow on Alexandrov spaces}, J. Eur. Math. Soc. 23, 29–62 (2021).
\bibitem{Nepechi} A. Nepechi, \emph{Toward canonical convex functions in Alexandrov spaces}, preprint (2019) http://arXiv.org/abs/1910.00253.

\bibitem{Shiomass1994} T. Shioya, \emph{Mass of rays in Alexandrov spaces of nonnegative curvature}, Comment. Math. Helvetici 69(2004) 208-228.
	




\end{thebibliography}
\end{document}